\newtheorem{thm}{Theorem}[section]
\newtheorem{lem}[thm]{Lemma}
\newtheorem{prop}[thm]{Proposition}
\newtheorem{ex}[thm]{Example}
\DeclareMathOperator{\re}{\textrm{Re}}
\DeclareMathOperator{\im}{\textrm{Im}}
\DeclareMathOperator{\Z}{\mathcal{Z}}
\theoremstyle{definition}
\newtheorem{defn}[thm]{Definition}
\newtheorem{remark}[thm]{Remark}
\begin{document}
\title{A Factorization Theorem for Harmonic Maps}
\author{Nathaniel Sagman}
\address{Mathematics Dept., MC 253-37, Caltech, Pasadena, CA 91125}
\email{nsagman@caltech.edu}
\begin{abstract}
     Let $f$ be a harmonic map from a Riemann surface to a Riemannian $n$-manifold. We prove that if there is a holomorphic diffeomorphism $h$ between open subsets of the surface such that $f\circ h = f$, then $f$ factors through a holomorphic map onto another Riemann surface. If such $h$ is anti-holomorphic, we obtain an analogous statement.
    
    For minimal maps, this result is well known and is a consequence of the theory of branched immersions of surfaces due to Gulliver-Osserman-Royden. Our proof relies on various geometric properties of the Hopf differential.
\end{abstract}
\maketitle
\begin{section}{Introduction}
Let $\Sigma$ be a Riemann surface with a $C^2$ conformal Riemannian metric $\mu$, and let $M$ be a smooth $n$-manifold, $n\geq 2$, equipped with a $C^2$ Riemannian metric $\nu$. Both manifolds are assumed to not have boundary. Harmonic maps $f:(\Sigma,\mu)\to(M,\nu)$ are solutions of the second order semilinear elliptic equation $$\tau(f,\mu,\nu) = \textrm{trace}_\mu \nabla^{\mu^*\otimes f^*\nu} df = 0.$$ On closed manifolds, $\tau(f,\mu,\nu)=0$ arises as the Euler-Lagrange equation of the Dirichlet Energy functional for the metrics $\mu,\nu$. Under fairly general compactness and curvature assumptions on $\Sigma$ and $M$, harmonic maps exist in any non-trivial homotopy class. 

A harmonic map $f:(\Sigma,\mu)\to(M,\nu)$ is admissible if its image is not contained in a geodesic. There is a viewpoint that while admissible harmonic maps are abundant in many contexts, they also reveal rigid geometric properties of the spaces on which they live. The result of this paper is another instance of this phenomenon. It connects local behaviour of a harmonic map to the global complex geometry of the underlying Riemann surface. 

\begin{thm}\label{main}
 Suppose $f:(\Sigma,\mu)\to (M,\nu)$ is an admissible harmonic map, and there is a conformal diffeomorphism $h:\Omega_1\to\Omega_2$ between open subsets of $\Sigma$ such that $f\circ h = f$ on $\Omega_1$. If $h$ is holomorphic, then there is a Riemann surface $(\Sigma_0,\mu_0)$, a holomorphic map $\pi:\Sigma\to \Sigma_0$, and a harmonic map $f_0:(\Sigma_0,\mu_0)\to(M,\nu)$ such that $\pi(\Omega_1)=\pi(\Omega_2)$ and $f$ factors as $f=f_0\circ \pi$. If $h$ is anti-holomorphic, $\Sigma_0$ is a Klein surface and $\pi$ is dianalytic.
\end{thm}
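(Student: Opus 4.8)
The plan is to realize $\Sigma_0$ as the orbit space of $\Sigma$ under the pseudogroup $\mathcal{G}$ of all conformal (in the anti-holomorphic case, conformal or anticonformal) diffeomorphisms $g\colon U\to V$ between open subsets of $\Sigma$ with $f\circ g=f$ on $U$, and to check that this orbit space carries a natural Riemann (resp.\ Klein) surface structure onto which $f$ descends harmonically. Since $h\in\mathcal{G}$ and $h(\Omega_1)=\Omega_2$, the quotient map $\pi$ satisfies $\pi\circ h=\pi$, hence $\pi(\Omega_1)=\pi(\Omega_2)$ automatically, and $f=f_0\circ\pi$ by construction; so the entire content is that $\Sigma/\mathcal{G}$ is a manifold and that $f_0$ is harmonic.

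The geometric input is the Hopf differential $\Phi$ of $f$: it is a holomorphic quadratic differential, and pulling back $f^*\nu$ along $g$ and taking $(2,0)$-parts gives $g^*\Phi=\Phi$ for holomorphic $g\in\mathcal{G}$ and $g^*\Phi=\bar\Phi$ for anticonformal $g$. If $\Phi\equiv 0$ then $f$ is weakly conformal, hence a branched minimal immersion, and the factorization is the Gulliver--Osserman--Royden case recalled in the introduction; so suppose $\Phi\not\equiv 0$. Then $Z:=Z(\Phi)$ is discrete, and on $\Sigma^\ast:=\Sigma\setminus Z$ the differential $\Phi$ determines a flat metric $|\Phi|$ with natural coordinates $w$ in which $\Phi=dw^2$ and every $g\in\mathcal{G}$ has the form $w\mapsto\pm w+c$ (resp.\ $w\mapsto\pm\bar w+c$). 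Three consequences: (i) each $g\in\mathcal{G}$ preserves $|\Phi|$ and carries $Z$ to $Z$ order-preservingly, so $\Sigma^\ast$ and $Z$ are unions of $\mathcal{G}$-orbits; (ii) $g$ is determined by its $1$-jet at one point, the stabilizer of a point of $\Sigma^\ast$ is trivial (a nontrivial one would be a point reflection, forcing $\Phi$ to vanish there), and the stabilizer of a zero of order $k$ is cyclic of order dividing $k+2$, acting by rotation; (iii) $\mathcal{G}$ has no nontrivial $1$-parameter subgroups, since a holomorphic vector field $X$ with $df(X)\equiv 0$, once straightened, makes $f$ independent of a real coordinate direction, so its image lies locally, hence globally by Sampson's unique continuation theorem, in a geodesic, against admissibility. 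By the same unique continuation, an element of $\mathcal{G}$ that is the identity on an open set is the identity, and a sequence $g_n\to\mathrm{id}$ in $\mathcal{G}$ with $g_n\ne\mathrm{id}$ would give arbitrarily small $|\Phi|$-periods of $f$, making the period subgroup of $\mathbb{C}$ nondiscrete and again collapsing a direction of $f$.

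Given all this, $\Sigma_0:=\Sigma/\mathcal{G}$ carries the quotient topology, $\pi$ is open, and the exceptional orbits (those meeting $Z$ or carrying a nontrivial stabilizer, a discrete set) are where $\pi$ ramifies, modeled on $z\mapsto z^m$; since a disk modulo a finite cyclic group is a disk, $\Sigma_0$ is locally Euclidean and inherits a holomorphic (resp.\ dianalytic) atlas from $\Sigma$ making $\pi$ holomorphic (resp.\ dianalytic). The hard part will be proving that the orbit relation $R\subseteq\Sigma\times\Sigma$ is \emph{closed}, so that $\Sigma_0$ is Hausdorff. For $(p_n,q_n)\to(p,q)$ with $g_n(p_n)=q_n$ and $p,q\in\Sigma^\ast$, I would extend each $g_n$ across a fixed-size simply connected flat $|\Phi|$-ball around $p_n$ (legitimate because $f\circ g_n=f$ propagates by unique continuation), run a normal-families argument on the family of flat local isometries, and pass to a limit $g\in\mathcal{G}$ with $g(p)=q$. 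When $p\in Z$ one must also use the cone structure of $|\Phi|$ at $p$: an element of $\mathcal{G}$ cannot carry a neighborhood of a zero of $\Phi$ into a region where $|\Phi|$ is bounded away from $0$, so the limit cannot leave the stratification $\Sigma=\Sigma^\ast\sqcup Z$, and one again obtains a limiting element of $\mathcal{G}$. This, rather than anything formal about the quotient, is where the geometry of the Hopf differential does the real work, and it is the main obstacle.

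Finally, picking any conformal metric $\mu_0$ on $\Sigma_0$, the map $f_0$ defined by $f=f_0\circ\pi$ is continuous, and since harmonicity of maps out of a surface is local and conformally invariant in the domain, $f_0$ is harmonic off the discrete ramification set of $\pi$; being continuous and bounded there it extends harmonically across it by removability of point singularities for harmonic maps. The anti-holomorphic case is identical once $\mathcal{G}$ is allowed anticonformal elements, which also preserve $|\Phi|$: the orbit space is then a Klein surface and $\pi$ is dianalytic.
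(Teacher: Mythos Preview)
Your overall architecture matches the paper's: form the orbit space of $\Sigma$ under local conformal symmetries of $f$, verify it is a surface, and descend $f$. Several of your local arguments are in fact cleaner than the paper's---for instance, your observation that a nontrivial holomorphic stabilizer at $p\in\Sigma^\ast$ must be the point reflection $w\mapsto -w+2w(p)$, whence $f_z(p)=0$ and $\Phi(p)=0$, replaces the paper's appeal to the Hartman--Wintner formula and Cheng's lemma for the analogous ``no self-accumulation'' statement. Likewise your period-group argument for ruling out $g_n\to\mathrm{id}$ is valid on $\Sigma^\ast$.

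The genuine gap is the Hausdorff step near $Z$. Your normal-families argument needs each $g_n$ to extend to a $|\Phi|$-ball of \emph{fixed} radius, but the paper's continuation lemma (Proposition~\ref{cont}) only lets you extend so long as \emph{both} the curve and its image stay a fixed distance from $Z$. When $q_n\to q\in Z$ (even if $p\in\Sigma^\ast$) the image side collapses and you cannot extract a limit; your sentence ``one again obtains a limiting element of $\mathcal{G}$'' is exactly where the argument breaks. The paper deals with this not by producing a limiting $g$, but by changing the \emph{definition} of equivalence at zeros (part~(2) of Definition~\ref{sim}: $p_1\sim p_2$ means nearby regular points match up), and then proving closedness of this relation via a delicate back-and-forth continuation argument (Lemmas~\ref{step} and~\ref{below}) that controls $\Phi$-distances on both sides simultaneously. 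In particular, Lemma~\ref{below} rules out the possibility that the continued image path spirals into the wrong zero, and its proof is the technical heart of the paper. Your sketch does not supply any mechanism for this, and the stratification remark (that $\mathcal{G}$ preserves $Z$) is not enough: it tells you only that a \emph{global} element of $\mathcal{G}$ respects $Z$, not that a sequence of local germs $g_n$ defined on shrinking domains has a limit.

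A smaller point: in the anti-holomorphic case the paper does not treat the enlarged pseudogroup directly. It first applies the holomorphic case to trivialize holomorphic symmetries, after which each orbit has at most two points (Lemma~\ref{pair}); this makes the Klein-surface quotient essentially a $\mathbb{Z}/2$ quotient by a single anti-holomorphic involution, and the Hausdorff issue becomes much milder. Your one-line reduction ``identical once $\mathcal{G}$ is allowed anticonformal elements'' hides this two-step structure.
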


Among other solutions to geometrically flavoured PDEs, Theorem \ref{main} has been known for minimal harmonic maps and pseudoholomorphic curves since the 1970s. Osserman in \cite{O} and Gulliver in \cite{G} studied singularites of the Douglas and Rado solutions to the Plateau problem. The only possible singularities are branch points, which are separated into so-called true branch points and false branch points. Osserman ruled out true branch points and made progress toward the non-existence of false branch points in \cite{O}, and Gulliver showed there are no false branch points in \cite{G}. Together their work proves that the Douglas and Rado solutions are immersed. For an exposition of the Plateau problem, see Chapters 4 and 6 of \cite{CM}.

Curiously, very few properties specific to minimal surfaces come into play in \cite{G}, but rather qualities shared by a larger class of surfaces. This prompted a deeper study of branched immersions of surfaces, which was carried out by Gulliver-Osserman-Royden in \cite{GOR}. A version of Theorem \ref{main} holds for the maps considered in \cite{GOR}. In the next subsection we describe their theory of branched immersions of surfaces and how minimal maps fit into the framework.

Aside from connections to the Plateau problem, the result of Gulliver-Osserman-Royden has other applications. We would like to highlight the work of Moore in \cite{M1} and \cite{M2}, where he studies moduli spaces of minimal surfaces. A map $f$ is somewhere injective if there is a regular point $p$ such that $f^{-1}(f(p)) = p$. Moore uses Theorem \ref{main} for minimal maps to show that a closed minimal map in an $n$-manifold, $n\geq 3$, is not somewhere injective if and only if it factors through a conformal branched covering map. The same result holds for pseudoholomorphic curves \cite[Proposition 2.5.1]{MS}, whose moduli spaces are an active field of study.

If $(\Sigma,\mu)$ is closed with genus at least $2$ and $(M,\nu)$ has negative curvature, then $\Sigma_0$ must have genus at least $2$. The described results for minimal surfaces thus show the somewhere injective condition is generic, for it is very rare for a closed Riemann surface to admit a holomorphic map onto another Riemann surface with non-abelian fundamental group.

In \cite{M1}, \cite{M2}, \cite{MS}, the somewhere injective condition plays a role in various transversality arguments. With this in mind, Theorem \ref{main} should be an essential tool in understanding the distribution of somewhere injective harmonic maps in certain moduli spaces of harmonic maps.

In a different inquiry, Jost and Yau proved a version of Theorem \ref{main} in \cite{Jo} for harmonic maps to K{\"a}hler manifolds, using it as a tool in their study of deformations of Kodaira surfaces. Their work has played a role in the development of the theories of K{\"a}hler manifolds and Higgs bundles. See the survey of Jost \cite{Jo2} for more information. 

\subsection{Minimal surfaces} Loosely following the exposition of Moore in section 4 of \cite{M1}, we explain how the proof of Theorem \ref{main} for minimal maps is deduced from the results in \cite{GOR}. Let $f:(\Sigma,\mu)\to (M,\nu)$ be a $C^1$ map. A point $p\in\Sigma$ is a branch point if $df(p)=0$. We say a branch point is a good branch point of order $m-1$ if there is a choice of coordinates $z$ on $\Sigma$ and $(x_1,\dots, x_n)$ on $M$ such that $f$ is described by the equations $$x_1=\re z^m \hspace{1mm} , \hspace{1mm} x_2=\im z^m \hspace{1mm}, \hspace{1mm} x_k = \eta_k(z) \hspace{1mm} , \hspace{1mm} k\geq 3,$$ where $\eta_k\in o(|z|^m)$. Note that $m=1$ implies we have a regular point.

\begin{remark}
These conventions could be a source of confusion. In \cite{GOR}, Gulliver-Osserman-Royden refer to ``good branch points" as simply ``branch points." This causes no harm in their work, but we should distinguish here.
\end{remark}

In \cite{GOR}, a branched immersion is a map from a surface that is regular everywhere apart from an isolated set of good branch points. For clarity we refer to such a map as a good branched immersion. In this paper, a minimal map is a weakly conformal harmonic map. Gulliver-Osserman-Royden use the representation formula of Hartman and Wintner \cite{HW} to show that a minimal map is a good branched immersion (see Propositions 2.2 and 2.4 in \cite{GOR}). In fact, using this same formula, Micallef and White recover finer coordinate expressions for minimal surfaces (see \cite[Theorem 1.]{MW}). 

An order $m-1$ branch point $p$ of a good branched immersion is ramified of order $r-1$ if $r$ is the maximal non-negative integer such that there is a disk $U$ centered at $p$ on which $f$ factors through a branched covering of degree $r$. If $r=m$, $p$ is called a false branch point, and true otherwise. We say $f$ is unramified if $r=0$. We now recast one of the key results of \cite{GOR}.
\begin{thm}[Proposition 3.19 in \cite{GOR}]\label{gor3} Let $\Sigma$ be a $C^1$ surface, $M$ a $C^1$ manifold, and $f:\Sigma \to M$ a $C^1$ good branched immersion with the unique continuation property and no true branch points. Then there is a $C^1$ surface $\Sigma_0$, a $C^1$ good branched immersion $\pi: \Sigma\to \Sigma_0$, and an unramified $C^1$ good branched immersion $f_0:\Sigma_0\to M$ such that $f=f_0\circ \pi$.
\end{thm}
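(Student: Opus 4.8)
The plan is to build $\Sigma_0$ by gluing together the targets of the local ``maximal'' factorizations of $f$, and then to show that the resulting space is a $C^1$ surface onto which $f$ descends, with $\pi$ absorbing all of the ramification.

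\textbf{Step 1 (local maximal factorizations).} Fix $p\in\Sigma$ and let $r_p$ be the maximal integer for which $f$ factors, on some disk $U_p$ around $p$, through a branched covering of degree $r_p$; since $p$ is not a true branch point, this $r_p$ realizes the ramification of $f$ at $p$, and we may write $f|_{U_p}=\bar f_p\circ\pi_p$ with $\pi_p\colon U_p\to\Delta_p$ a degree-$r_p$ branched covering onto a disk and $\bar f_p\colon\Delta_p\to M$ a $C^1$ good branched immersion. Maximality of $r_p$ forces $\bar f_p$ to be unramified: a factorization of $\bar f_p$ through a branched covering of degree $s\ge 2$ would compose with $\pi_p$ to give a degree-$r_ps$ factorization of $f$ near $p$, contradicting maximality. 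After shrinking $U_p$ we choose the coordinate on $\Delta_p$ so that $\pi_p$ is the model map $z\mapsto z^{r_p}$; then $\pi_p$ is itself a $C^1$ good branched immersion, with $p$ a branch point precisely when $r_p\ge 2$.

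\textbf{Step 2 (rigidity and compatibility).} The heart of the argument is that these local factorizations are essentially canonical. One shows: if $f=\bar g\circ\sigma$ on a connected open set $U$, with $\sigma$ a branched covering and $\bar g$ unramified, then on any disk $U_p\subseteq U$ there is a branched covering $\rho$ with $\sigma|_{U_p}=\rho\circ\pi_p$ and $\bar g\circ\rho=\bar f_p$, and $\rho$ is uniquely determined. Off the isolated true branch points of $\bar g$ and of $\bar f_p$ the map $\bar g$ is an immersion, so there $\rho$ is forced to equal $\bar g^{-1}\circ\bar f_p$ read in charts; the unique continuation property then propagates this identity across the branch points, which simultaneously shows $\rho$ does not depend on any of the choices. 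Applying this over each connected component of an overlap $U_p\cap U_q$ shows that the factorizations through $\pi_p$ and through $\pi_q$ differ by a canonical branched covering between open subsets of $\Delta_p$ and $\Delta_q$, a homeomorphism away from the images of branch points.

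\textbf{Step 3 (assembling $\Sigma_0$; $f_0$ is unramified).} Gluing the disks $\Delta_p$ along the canonical transition maps of Step 2 produces a space $\Sigma_0$ together with a continuous surjection $\pi\colon\Sigma\to\Sigma_0$ agreeing with $\pi_p$ near each $p$, and a well-defined map $f_0\colon\Sigma_0\to M$ agreeing with $\bar f_p$ on each $\Delta_p$; by construction $f=f_0\circ\pi$. One checks $\Sigma_0$ is Hausdorff and second countable: a non-separation in $\Sigma_0$ would, after pulling back near the relevant branch point, yield two genuinely different local factorizations of $f$ that agree on an open set, contradicting the uniqueness in Step 2, while second countability is inherited from $\Sigma$ through $\pi$. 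The transition maps are coordinate changes composed with the model maps $z\mapsto z^k$, so $\Sigma_0$ is a $C^1$ surface, $\pi$ is a $C^1$ good branched immersion by Step 1, and $f_0$ is a $C^1$ good branched immersion because each $\bar f_p$ is. Finally $f_0$ is unramified: were it to factor near some $q=\pi(p)$ through a branched covering of degree $\ge 2$, pulling back along $\pi_p$ would display $f$ near $p$ as factoring through a branched covering of degree $>r_p$, contradicting the choice of $r_p$ in Step 1.

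\textbf{Main obstacle.} The genuine difficulty is Step 2, and through it the Hausdorffness in Step 3: the construction collapses unless the local maximal factorization is truly canonical, so that the pieces $\Delta_p$ glue with no monodromy ambiguity. At regular points this is immediate, since $f$ is an embedding and the identity is the only diffeomorphism respecting it; the subtlety is concentrated at branch points, where a priori several ``deck-type'' diffeomorphisms could intertwine the factorizations. This is exactly where both hypotheses are indispensable: the absence of true branch points supplies the clean normal form (a model map $z\mapsto z^{r_p}$ followed by a map that is an immersion off a discrete set), and the unique continuation property forces two candidate transition maps that agree on an open set to agree everywhere, eliminating the ambiguity.
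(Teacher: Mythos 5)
This theorem is cited from Gulliver--Osserman--Royden; the paper itself does not prove it, but only sketches GOR's approach (define the relation $\sim$ on $\Sigma$, show it is an equivalence relation, take the quotient $\Sigma_0 = \Sigma/\sim$, show the quotient is a $C^1$ manifold). Your proposal reorganizes this as a ``bottom-up gluing'' rather than a ``top-down quotient'': you start from the local maximal factorizations $(U_p,\Delta_p,\pi_p,\bar f_p)$ and glue the disks $\Delta_p$ along transition maps. The two constructions produce the same $\Sigma_0$ -- your gluing relation is exactly GOR's $\sim$ -- and both reduce to (a) a local structure theorem at branch points and (b) a compatibility/Hausdorff step where the unique continuation property carries the load. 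A small simplification you miss: under the hypothesis of no true branch points, not only is each $\bar f_p$ unramified, it is a bona fide immersion (the branching order of $\bar f_p$ at the origin is $m_p/r_p-1 = 0$), so the detour through ``true branch points of $\bar g$'' in Step~2 is unnecessary; this in fact makes Step~2 uniqueness essentially elementary, because $\bar g$ is then a local embedding everywhere.

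The genuine soft spot is the Hausdorff claim in Step~3. You assert that a non-separation in $\Sigma_0$ would produce ``two genuinely different local factorizations of $f$ that agree on an open set, contradicting the uniqueness in Step~2.'' But Step~2 uniqueness is a statement about transition maps on overlaps $U_p\cap U_q$, while a failure of Hausdorffness is a statement about accumulating identifications: sequences $q^{(1)}_n\to p_1$, $q^{(2)}_n\to p_2$ with $q^{(1)}_n\sim q^{(2)}_n$, each via its own local diffeomorphism $h_n$, but $p_1\not\sim p_2$. These $h_n$ are defined on shrinking, possibly disjoint domains; nothing in Step~2 tells you they cohere into a single ``local factorization near $p_1$,'' nor that any of them can be extended through the branch point. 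That extension is precisely what the unique continuation property is for, and it is a substantive analytic step: one continues a single $h_n$ along a path toward the branch point and uses unique continuation (in the paper's holomorphic analogue, Proposition~\ref{Hausdorff} together with Lemmas~\ref{lonely} and~\ref{nofriends}; in GOR, their Proposition~3.14) to conclude the continuation exists and terminates at the right place. Your Step~3 names the right hypothesis but applies it in the wrong place, leaving the passage from ``accumulating identifications'' to ``$p_1\sim p_2$'' unjustified.
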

We do not define the unique continue property of Gulliver-Osserman-Royden (see \cite[page 757]{GOR}), but remark that minimal maps have this property (see \cite[Lemma 2.10]{GOR}). The minimal case is essentially handled in \cite[Proposition 3.24]{GOR}. If a map is conformal, one can dispense of the hypothesis that there are no true branch points, and the objects $\pi$, $\Sigma_0$, and $f_0$ all have the same regularity as $f$ apart from at branch points and images of branch points.

To prove Theorem \ref{gor3}, Gulliver-Osserman-Royden define a relation $\sim$ on $\Sigma$ as follows.
\begin{enumerate}
    \item If $p_1$ and $p_2$ are regular points for $f$, $p_1\sim p_2$ if there exists open sets $\Omega_i$ containing $p_i$, and an orientation preserving $C^1$ map $h:\Omega_1\to\Omega_2$ such that $f\circ h =f$ on $\Omega_1$.
    \item If one of $p_1$ or $p_2$ is a branch point, then in any pair of neighbourhoods $\Omega_i$ containing $p_i$ there exists neighbourhoods $\Omega_i'\subset \Omega_i$ of $p_i$ consisting of only regular points such that for all $p_1'\in \Omega_1'\backslash\{p_1\}$ there exists $p_2'\in \Omega_2'\backslash\{p_2\}$ such that $p_1'\sim p_2'$, and for all $p_2'\in \Omega_2'\backslash\{p_2\}$ there exists $p_1'\in \Omega_1'\backslash\{p_1\}$ such that $p_1'\sim p_2'$.
\end{enumerate}
Gulliver-Osserman-Royden show that this is an equivalence relation and define the quotient $\pi: \Sigma \to \Sigma_0$. They prove $\Sigma_0$ has the structure of a $C^1$ manifold and the map $f_0:\Sigma\to M$ is defined by setting $f_0([p])=f(p)$. Ramification leads to equivalent points, so $f_0$ is unramified.

When $\Sigma$ is a Riemann surface and $\Sigma$ and $M$ are equipped with metrics so that $f$ is minimal, we impose that $h$ is holomorphic. Following the proof of \cite[Proposition 3.24]{GOR}, one can show that the transition maps on $\Sigma_0$ are holomorphic away from the branch points and extend holomorphically via the removeable singularities theorem. One checks in coordinates that the map $f_0$ is minimal with respect to the conformal metric on $\Sigma_0$ obtained via uniformization. The existence of a map $h$ as in Theorem \ref{main} amounts to saying some classes under $\sim$ are not singletons. The minimal case follows directly.

\begin{remark}
Pertaining to the Plateau problem, Gulliver-Osserman-Royden prove a version of Theorem \ref{main} holds for surfaces with particular boundary data. The argument demonstrates that if a false branch point exists, then one can lower the area by passing through a holomorphic map onto another surface. The solutions of Douglas and Rado minimize the area relative to the boundary data, so this cannot occur.
\end{remark}

Gulliver-Osserman-Royden do not consider orientation reversing maps in the definition of $\sim$, but their construction can be modified to allow for this. In this situation, we may end up with a mapping onto a non-orientable surface. Moore notes this in \cite{M1}, although his context is slightly different from ours. Since we could not locate a formal proof in the literature, we explain the necessary adjustments in subsection 4.3.
\subsection{Harmonic maps vs. minimal maps} To prove Theorem \ref{main} in the holomorphic case, we follow the blueprint of Gulliver-Osserman-Royden. That is, we define an equivalence relation on $\Sigma$ and take the quotient as our candidate for the surface $\Sigma_0$. However, it is not obvious how one should define $\sim$. The difficulty comes from the singularities of harmonic maps, in that
\begin{enumerate}[label=(\roman*)]
    \item harmonic maps can have rank $1$ singularities, which do not occur in the theory of Gulliver-Osserman-Royden, and
    \item branch points are not good branch points. At best, we can combine the Hartman-Wintner formula with \cite[Lemma 2.4]{C} to see that near a branch point $p$ of order $m-1$ there is a $C^1$ coordinate $z$ on the source and a $C^\infty$ coordinate on the target such that $p\mapsto 0$, $f(p)\mapsto 0$, and $f$ may be expressed $f=(f^1,\dots, f^n)$ with $$f^1 = p^1 \hspace{1mm} , \hspace{1mm} f^k = p^k + r^k \hspace{1mm} , \hspace{1mm} k\geq 2,$$ where $p^1$ is a spherical harmonic of degree $m$, $p^k$ is a spherical harmonic of degree at least $m$, and $r^k\in o(|z|^m)$.
\end{enumerate}
To overcome these difficulties, we exploit the geometry of a holomorphic quadratic differential known as the Hopf differential. In some sense, the Hopf differential treats rank $1$ and $2$ points on an equal footing. Thus, if we define $\sim$ in terms of a condition on the Hopf differential, in theory we shouldn't encounter any difficulties due to rank $1$ singularities. In practice this is mostly true--at some points we need to refer to the Hartman-Wintner formula. As for (ii), the Hopf differential defines a ``natural coordinate" for the harmonic map near a branch point, in which the geometry can be more easily probed. At a false branch point, we see ramification behaviour similar to that displayed by minimal maps.

The only missing piece of Gulliver-Osserman-Royden's theory is the unique continuation property. In Proposition \ref{cont}, we show that analytic continuation of natural coordinates for the Hopf differential induces a continuation of $h$. Using this proposition, we establish a ``holomorphic unique continuation property" (Proposition \ref{hUCP}).

\subsection{Future directions.} It is tempting to conjecture that some version of 
 Theorem \ref{main} should hold without the hypothesis that $h$ is conformal. The main motivation would be to improve our understanding of somewhere injective harmonic maps. We would like to point out that, in view of the example below, we cannot expect the map $\pi$ to be holomorphic with respect to a complex structure on $\Sigma_0$. 
\begin{ex}
Let $(\Sigma_0,\mu_0)$ be a closed hyperbolic surface and $f_0:(\Sigma_0,\mu_0)\to (M,\nu)$ a totally geodesic map. Fix a smooth surface $\Sigma$ of genus at least $2$ and a homotopy class of maps $\mathbf{f}:\pi_1(\Sigma)\to\pi_1(\Sigma_0)$ with degree at least $2$. Any $C^2$ metric $\mu$ yields a unique harmonic map $\pi:(\Sigma,\mu)\to (\Sigma_0,\mu_0)$ in the homotopy class $\mathbf{f}$. One can then find many diffeomorphisms $h:\Omega_1\to \Omega_2$ between open subsets of $\Sigma$ such that $f\circ h = f$, and by construction $f$ factors as $f=f_0\circ \pi$. Generically, the surface $(\Sigma,\mu)$ will not admit a holomorphic map onto any Riemann surface of genus at least $2$.
\end{ex}
We simplify our study of singularities using complex analytic methods. Without the conformal hypothesis, the only local information we have comes from the Hartman-Wintner representation formula. If this is the main tool, then it is also natural to ask about more general solutions to second order semiliinear elliptic systems, rather than just harmonic maps. An analysis of singularities would be related to understanding local behaviour of spherical harmonics.

A substitute for the unique continuation property seems to be a large hurdle. Implicit in the proof of the unique continuation property for minimal maps is the following result (see \cite[Lemma 2.10]{GOR}).
\begin{prop}\label{UCP}
Let $\mathbb{D}\subset \mathbb{R}^2$ be the unit disk. Suppose $u_1,u_2:\mathbb{D}\to M$ are minimal maps such that, for all open sets $D_1\subset \mathbb{D}$ containing $0$, there is an open subset $D_2\subset\mathbb{D}$ (possibly not containing $0$) such that $u^2(D_2)\subset u^1(D_1)$. Then there exists an open subset $D'\subset\mathbb{D}$ containing $0$ such that $u^2(D')\subset u^1(\mathbb{D})$.
\end{prop}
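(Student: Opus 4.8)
The plan is to run the argument common to unique-continuation statements: extract a limiting configuration by compactness, show that near the limit $u_2$ is forced to factor through $u_1$ via a holomorphic reparametrisation, and then spread that factorisation to a full neighbourhood using the unique continuation property of minimal surfaces. To set this up I would first make the hypothesis quantitative. Applied to $D_1=B_{1/k}(0)$ it yields nonempty open $V_k\subset\mathbb{D}$ with $u_2(V_k)\subset u_1(B_{1/k}(0))$; since minimal maps are good branched immersions, with isolated branch points and discrete point-preimages, after shrinking I may take each $V_k$ to be a small disk in the regular set of $u_2$ avoiding $u_2^{-1}\bigl(u_1(0)\bigr)$, so that $u_2(V_k)$ is a connected two-dimensional piece lying on a single embedded surface $S_k$ — a regular sheet of $u_1(B_{1/k}(0))$ away from $u_1(0)$. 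Choosing $q_k\in V_k$, continuity of $u_1$ forces $\operatorname{diam}u_1(B_{1/k}(0))\to 0$, hence $u_2(q_k)\to u_1(0)$, and along a subsequence $q_k\to q$. One then needs $q$ to be an interior point, so that $u_2(q)=u_1(0)$ and every neighbourhood of $q$ contains some $V_k$; this is automatic in the setting in which the lemma is applied, where $u_1,u_2$ are a single minimal map restricted to neighbourhoods of two points, $u_1(0)=u_2(0)$, and the $V_k$ cluster at $q=0$ in the domain of $u_2$.

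The local factorisation is then essentially formal. Since $u_2\colon\mathbb{D}\to M$ is weakly conformal and $u_2(V_k)\subset S_k$, the restriction $u_2|_{V_k}$ is weakly conformal into $S_k$ for the induced metric, hence, being a weakly conformal map between Riemann surfaces, holomorphic or anti-holomorphic for the conformal structure that $S_k$ inherits. As $u_1$ is a weakly conformal immersion near the relevant point, it restricts to a biholomorphism of a small disk $W\subset\mathbb{D}$ onto an open piece of $S_k$; therefore $v_k:=u_1^{-1}\circ u_2\colon V_k\to W$ is holomorphic (after reversing an orientation if needed) and satisfies $u_1\circ v_k=u_2$ on $V_k$. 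Thus the minimal surfaces $u_1(\mathbb{D})$ and $u_2(\mathbb{D})$ share the open two-dimensional piece $u_2(V_k)$, and these pieces accumulate at $u_1(0)$ as $k\to\infty$.

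It remains to continue the holomorphic germ $v_k$, for a fixed large $k$, to a neighbourhood $D'\ni q$ on which $u_1\circ v=u_2$ still holds: granting this, $u_1\circ v$ is harmonic on $D'$ and agrees with $u_2$ on $V_k$, so $u_1\circ v=u_2$ on all of $D'$ by unique continuation for harmonic maps, whence $u_2(D')=u_1\bigl(v(D')\bigr)\subset u_1(\mathbb{D})$, as required. This continuation is where I expect the real work to be, and it is exactly the unique continuation property of minimal maps: a minimal surface containing a two-dimensional open piece of another minimal surface continues to lie on it along any path through the immersed region — by unique continuation for the quasilinear elliptic minimal surface equation, or simply because minimal maps into a real-analytic ambient metric are real-analytic and the coincidence set is a real-analytic variety — and the holomorphic reparametrisation is carried along with the coincidence as long as one stays where $u_1$ is a local diffeomorphism. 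The two hazards to control are (i) the continuation of $v$ hitting a branch point of $u_1$ or escaping the immersed region, which must be excluded using the branch-point normal form $z\mapsto(z^m,o(|z|^m))$ for minimal maps and a removable-singularities argument — the same mechanism that pushes the factorisation across branch points in \cite{GOR} and in the proof of Theorem \ref{gor3} — and (ii) the topological point of pinning $q$ to the interior of $\mathbb{D}$, which is immediate in the intended application but in general belongs to the framework of \cite{GOR}.
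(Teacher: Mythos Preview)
The paper does not prove Proposition~\ref{UCP}. It is stated in the introduction as a known fact, ``implicit in the proof of the unique continuation property for minimal maps (see \cite[Lemma 2.10]{GOR}),'' and is used only rhetorically: to contrast with the failure of the analogous statement for general harmonic maps and to motivate the paper's own substitute, Proposition~\ref{hUCP}. There is therefore no proof in the paper to compare your proposal against.

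For what it is worth, your outline is in the right spirit and points in the direction of the Gulliver--Osserman--Royden argument: use weak conformality to get a local holomorphic factorisation of $u_2$ through $u_1$ on the pieces $V_k$, then propagate via unique continuation. But as written it is a plan rather than a proof. You explicitly assume away the main topological issue (that the $q_k$ accumulate at an interior point, which you concede is ``automatic in the setting in which the lemma is applied'' but not in general), and the continuation across branch points --- which you correctly identify as ``where the real work is'' --- is simply deferred back to \cite{GOR}. So your proposal essentially reduces the statement to the GOR machinery, which is exactly what the paper does by citing \cite[Lemma 2.10]{GOR} rather than proving it.
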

This result above fails emphatically if we replace minimal maps with harmonic maps, even if $M=\mathbb{R}^2$. Indeed, the simple example $$u_1(x,y) = (x,xy) \hspace{1mm} , \hspace{1mm} u_2(x,y) = (x,y)$$ does not satisfy Proposition \ref{UCP}. On the other hand, our ``holomorphic unique continuation property" provides a substitute for Proposition \ref{UCP} (see Proposition \ref{hUCP}). This is one of the reasons we expect a more general version of Theorem \ref{main} to be much more delicate, and we defer this investigation to a future project. 
\subsection{Acknowledgements} Many thanks to Vlad Markovic for encouragement and sharing helpful ideas. I would also like to thank John Wood and J{\"u}rgen Jost for comments on earlier drafts.
\end{section}

\begin{section}{Harmonic Maps from Riemann Surfaces}
We give background on harmonic maps. The content is standard and can be found in any text on harmonic maps. We then discuss analytic continuation and singularities.
\subsection{Harmonic maps.} Throughout, we let $(\Sigma,\mu)$ denote a Riemann surface with a $C^2$ conformal metric, and $(M,\nu)$ an $n$-manifold, $n\geq 3$, with a $C^2$ Riemannian metric. 
A $C^2$ map $f:(\Sigma,\mu)\to (M,\nu)$ gives a pullback bundle $f^*TM$, and the derivative $df$ defines a section of the endomorphism bundle $T^*\Sigma\otimes f^*TM$. We denote by $\nabla$ the connection on the tensor bundle $T^*\Sigma\otimes f^*TM$ induced by the Levi-Civita connections $(\nabla^\mu)^*$ and $\nabla^{f^*\nu}=f^*\nabla^\nu$ on $T^*\Sigma$ and $f^*TM$ respectively. 

\begin{defn}
The tension field of a $C^2$ map $f:(\Sigma,\mu)\to (M,\nu)$ is the section of $f^*TM$ given by $$\tau=\tau(f,\mu,\nu) = \textrm{trace}_\mu \nabla df.$$ The map $f$ is harmonic if $\tau= 0 $.
\end{defn}
In a local conformal coordinate $z=x+iy$, the tension field is given by
$$\tau = |\mu|^{-1}\Big (\nabla_{\frac{\partial}{\partial x}}^{f^*\nu}df\Big (\frac{\partial}{\partial x}\Big ) + \nabla_{\frac{\partial}{\partial y}}^{f^*\nu}df\Big (\frac{\partial}{\partial y}\Big )\Big )$$
and hence $\tau(f,\mu,\nu)=0$ defines a conformally invariant semilinear elliptic equation of second order. If $\mu$ is $C^\alpha$ and $\nu$ is $C^\beta$, then the harmonic map is $C^\gamma$, where $\gamma=\min\{\alpha+2,\beta+1\}$. We are also allowing $\alpha,\beta=\infty$ or $\omega$. Therefore, our harmonic maps are at least $C^3$.
\begin{defn}
A harmonic map $f:(\Sigma,\mu)\to (M,\nu)$ is minimal if it is also weakly conformal. That is, conformal in the sense of distributions.
\end{defn}
As for the complex theory, we set $(f^*TM)^\mathbb{C}=f^*TM\otimes \mathbb{C}$ to be the complexification of the pullback bundle and extend $f^*\nu$ linearly. Given a local coordinate $z=x+iy$, define $$\frac{\partial}{\partial z} = \frac{1}{2}\Big ( \frac{\partial}{\partial x} - i\frac{\partial}{\partial y}\Big ) \hspace{1mm} , \hspace{1mm} \frac{\partial}{\partial \overline{z}} = \frac{1}{2}\Big ( \frac{\partial}{\partial x} + i\frac{\partial}{\partial y}\Big ).$$ Using this coordinate, we have locally defined sections of $(f^*TM)^\mathbb{C}$ given by $$f_z = df\Big ( \frac{\partial}{\partial z}\Big ) \hspace{1mm} , \hspace{1mm} f_{\overline{z}} = df\Big ( \frac{\partial}{\partial \overline{z}}\Big ).$$ 

\begin{defn}
The Hopf differential of a harmonic map $f:(\Sigma,\mu)\to (M,\nu)$ is the holomorphic quadratic differential $\Phi$ on $\Sigma$ specified by the family of local expressions $$\langle f_z, f_z\rangle_{f^*\nu} dz^2,$$ where $z$ ranges over local coordinates for $\Sigma$.
\end{defn}
It follows from the definitions that $f$ is minimal if and only if $\Phi = 0$. If $\Phi$ does not vanish identically, the zeros of $\Phi$ are independent of the parametrization and discrete. If $\Phi(p)\neq 0$, then near $p$ we can find a holomorphic coordinate $z$ such that $$\Phi(z) = dz^2.$$ If $\Phi(p)=0$, there is a coordinate $z$ such that $$\Phi(z) = z^n dz^2.$$ Such coordinates are called natural coordinates for $\Phi$. 

The quadratic differential $\Phi$ induces a singular flat metric: the $\Phi$-metric. Locally, if $\Phi=\phi(z)dz^2$, then the metric tensor is $$|\phi(z)||dz|^2.$$  The singular points are the zeroes of $\Phi$. A disk of radius $r$ centered at a point $p$ in the $\Phi$-metric shall be called a $\Phi$-disk and written $B_r(p)$. The induced distance function is denoted $d(\cdot,\cdot)$. Although we work with different differentials in the course of the paper, the use of this notation in context should be clear.

\subsection{Analytic continuation} Until Section 4, let  $f:(\Sigma,\mu)\to (M,\nu)$ be an admissible harmonic map with non-zero Hopf differential $\Phi$ and let $h:\Omega_1\to\Omega_2$ be a holomorphic map as in the statement of Theorem \ref{main}. We treat anti-holomorphic maps in Section 4. Throughout the paper, we let $\mathcal{Z}$ denote the zero locus of $\Phi$. By restricting, we assume $\Omega_1$ is a $\Phi$-disk. 

We use the geometry of the Hopf differential to analytically continue $h$. Let $p\in\Omega_1$ be such that $\Phi(p)\neq 0$, and let $U\subset \Omega_1$ be an open subset containing $p$ such that $\Phi\neq 0$ in $U$. Given a holomorphic local coordinate $z$ in $U$, we define a local coordinate $w$ on $h(U)$ by $w=z\circ h^{-1}$. In these coordinates, $h$ is given by $w(h(z))=z$ and $$df_p\Big ( \frac{\partial}{\partial z}\Big ) = df_{h(p)}\Big ( \frac{\partial}{\partial w}\Big )\in T_{f(z)}M\otimes \mathbb{C}.$$ 
 \begin{remark}
 Here we are viewing $df$ as a map from $T\Sigma\to TM$ rather than as a section of the endomorphism bundle $T^*\Sigma\otimes f^*TM$.
 \end{remark}
 Choosing $z$ to be a natural coordinate with $z(p)=0$, we obtain $$\langle f_w, f_w\rangle (w(h(z)))=\langle f_z,f_z\rangle(z) = 1.$$ Therefore, $w$ defines a natural coordinate on $h(U)$. We have proved the following lemma.

\begin{lem}\label{isometry}
$h$ is a local isometry in the $\Phi$-metric. If $\Omega_1$ is a $\Phi$-disk then so is $\Omega_2$, and $h$ takes a natural coordinate $z$ on $\Omega_1$ to a natural coordinate $w$ on $\Omega_2$ in which $w(h(z))=z$.
\end{lem}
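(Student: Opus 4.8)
The plan is to show that $h$ pulls back natural coordinates on $\Omega_2$ to natural coordinates on $\Omega_1$, which immediately gives that $h$ is a local isometry of the $\Phi$-metric, since in natural coordinates the $\Phi$-metric is the flat metric $|dz|^2$ (away from $\Z$). The key observation is that the identity $f \circ h = f$ forces the derivative of $f$ in the $w$-coordinate on $\Omega_2$ to agree with the derivative of $f$ in the $z$-coordinate on $\Omega_1$, and since the Hopf differential is defined purely in terms of these derivatives and the metric $\nu$, its local expression is preserved.

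Concretely, first I would fix $p \in \Omega_1 \setminus \Z$ and a neighborhood $U \subset \Omega_1 \setminus \Z$, and choose a natural coordinate $z$ on $U$ with $z(p) = 0$, so that $\Phi = dz^2$ on $U$, i.e. $\langle f_z, f_z \rangle \equiv 1$. Define $w$ on $h(U)$ by $w = z \circ h^{-1}$; this is a holomorphic coordinate because $h$ is a holomorphic diffeomorphism. Next I would differentiate the relation $f = f \circ h$: writing things out, $df_p(\partial/\partial z) = df_{h(p)}(dh(\partial/\partial z)) = df_{h(p)}(\partial/\partial w)$, the last equality because $w(h(z)) = z$ means $dh$ sends $\partial/\partial z$ to $\partial/\partial w$. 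Hence $f_w(h(q)) = f_z(q)$ as elements of $(f^*TM)^{\mathbb C}$ for all $q \in U$. Taking the $\nu$-inner product gives $\langle f_w, f_w\rangle(w(h(q))) = \langle f_z, f_z \rangle(z(q)) = 1$, so $w$ is a natural coordinate on $h(U)$. Since natural coordinates differ by affine maps $w \mapsto \pm w + c$ on the overlaps (away from zeros), the $\Phi$-metric reads $|dw|^2$ on $h(U)$, matching $|dz|^2$ on $U$ under $h$; thus $h$ is a local isometry near every $p \notin \Z$. Because $\Z$ is discrete and $h$ is continuous, a limiting argument (or invoking that isometries extend across isolated points, or that the $\Phi$-metric completion is unaffected by discrete sets) shows $h$ is a local isometry on all of $\Omega_1$.

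For the global statements: if $\Omega_1$ is a $\Phi$-disk $B_r(p)$, then since $h$ is a surjective local isometry onto $\Omega_2$ and a homeomorphism, $\Omega_2$ is isometric to $B_r(p)$, hence is itself a $\Phi$-disk. The final claim — that a natural coordinate $z$ on $\Omega_1$ is carried to a natural coordinate $w$ on $\Omega_2$ with $w(h(z)) = z$ — is exactly the construction above, now extended from the local patch $U$ to all of $\Omega_1$ (shrinking if necessary so $z$ is genuinely a coordinate, i.e. avoiding the zero if $p \in \Z$, or using the form $\Phi = z^n dz^2$ and the corresponding computation $\langle f_z, f_z\rangle = z^n$ otherwise).

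I expect the only real subtlety to be the behavior at points of $\Z$: the argument above is clean on $\Omega_1 \setminus \Z$, but one must check that "local isometry" and the natural-coordinate correspondence persist across the discrete zero set. This is handled by continuity of $h$ together with the fact that the $\Phi$-metric, though singular at zeros, has these points as isolated cone points which are determined by the surrounding smooth geometry; alternatively, one notes $h$ and its inverse are holomorphic (hence smooth) everywhere and the identity $\langle f_z, f_z\rangle(z) = \langle f_w, f_w\rangle(w(h(z)))$ extends by continuity from $\Omega_1 \setminus \Z$ to all of $\Omega_1$. So the obstacle is minor and purely a matter of bookkeeping near $\Z$.
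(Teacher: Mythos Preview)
Your proposal is correct and follows essentially the same approach as the paper: define $w = z \circ h^{-1}$, use $f \circ h = f$ to identify $df(\partial/\partial z)$ with $df(\partial/\partial w)$, and conclude that $\langle f_w, f_w\rangle = \langle f_z, f_z\rangle = 1$ so that $w$ is a natural coordinate. The paper's treatment is in fact terser than yours---it does not explicitly discuss extension across $\Z$, since by restriction $\Omega_1$ is taken to be a $\Phi$-disk not meeting $\Z$---so your extra care there is harmless but unnecessary in context.
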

The goal of this subsection is to prove the proposition below. In the proof we use the notion of a maximal $\Phi$-disk. See section 5 in \cite{St} for a detailed discussion on maximal $\Phi$-disks. Let $\mathcal{Z}$ denote the zero set of $\Phi$ (which is isolated).
\begin{prop}\label{cont}
Suppose $\Omega_1,\Omega_2$ are $\Phi$-disks with no zeros of $\Phi$ and that $\gamma:[0,L]\to \Sigma$ is a curve starting in $\Omega_1$ and that $\gamma$ first strikes $\partial \Omega_1$ at a point $q$. If there is an $\epsilon> 0$ such that $$\textrm{min}\big \{\inf_{s\in\gamma|_{\Omega_1},t\in\mathcal{Z}}d(s,t),\inf_{s\in \gamma|_{\Omega_1},t\in\mathcal{Z}}d(h(s),t)\big \}\geq \epsilon$$ then there is a neighbourhood of $q$ in which $h$ can be analytically continued along $\gamma$.
\end{prop}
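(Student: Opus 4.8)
The plan is to move the question into natural coordinates for $\Phi$. By Lemma~\ref{isometry}, $h$ is a $\Phi$-isometry carrying natural coordinates to natural coordinates; since it is also holomorphic and must preserve the natural coordinate structure (that is, $dz^{2}$), in natural coordinates it is forced to be an affine map $\zeta\mapsto\pm\zeta+c$ --- and an affine map extends holomorphically across $\partial\Omega_{1}$ at no cost. So the real content is to make natural coordinates available \emph{at} the exit point $q$ and \emph{at the point that $h(\gamma(s))$ approaches}, and this is exactly what the two distance hypotheses provide.

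First I would pin down the endpoints. Let $t_{0}$ be the first time $\gamma$ meets $\partial\Omega_{1}$, so $\gamma(t_{0})=q$ and $\gamma([0,t_{0}))\subset\Omega_{1}$. By Lemma~\ref{isometry}, $h$ is a $\Phi$-isometry of $\Omega_{1}$ onto $\Omega_{2}$ (on a $\Phi$-disk the $\Phi$-distance is the Euclidean distance read in a natural coordinate, and $h$ is the identity in natural coordinates), so $s\mapsto h(\gamma(s))$ is $d$-Cauchy as $s\to t_{0}$; staying in $\Omega_{2}$ and, by hypothesis, at distance $\geq\epsilon$ from $\mathcal{Z}$, it converges to a point $q^{*}\in\Sigma$. (This is the one spot where completeness intervenes: after possibly shrinking $\Omega_{1}$ once more so that $\overline{\Omega_{1}}$ is a compact $\Phi$-disk the limit exists, and alternatively one may continue $h$ across a finite chain of $\Phi$-disks of radius $\epsilon/2$ centred along $\gamma$, the bound $d(h(s),\mathcal{Z})\geq\epsilon$ keeping each target disk free of zeros.) Since $d(\cdot,\mathcal{Z})$ is continuous, the hypotheses force $d(q,\mathcal{Z})\geq\epsilon$ and $d(q^{*},\mathcal{Z})\geq\epsilon$, so $\Phi$ vanishes at neither point.

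Next comes the coordinate argument. At the regular points $q$ and $q^{*}$ pick zero-free embedded $\Phi$-disks $V$ and $V^{*}$ with natural coordinates $\zeta\colon V\to\mathbb{C}$, $\omega\colon V^{*}\to\mathbb{C}$ normalized to vanish at $q$, $q^{*}$; keep the natural coordinate $z$ on $\Omega_{1}$ and the natural coordinate $w=z\circ h^{-1}$ on $\Omega_{2}$ from Lemma~\ref{isometry}, for which $w(h(x))=z(x)$. On the connected components of $\Omega_{1}\cap V$ and $\Omega_{2}\cap V^{*}$ that meet $\gamma$, respectively $h\circ\gamma$, any two natural coordinates for $\Phi$ differ by $\zeta\mapsto\pm\zeta+\mathrm{const}$, so combining $\zeta=\pm z+\mathrm{const}$, $\omega=\pm w+\mathrm{const}$ with $w\circ h=z$ gives $\omega\circ h=\pm\zeta+c$ on a nonempty open set $W_{0}$ accumulating at $q$ along $\gamma$; letting $x\to q$ in $W_{0}$, where $\zeta(x)\to 0$ and $\omega(h(x))\to\omega(q^{*})=0$, forces $c=0$. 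Hence $\omega\circ h=\pm\zeta$ on $W_{0}$, and $g:=\omega^{-1}\circ(\pm\zeta)$ is a holomorphic $\Sigma$-valued map on the neighbourhood $\{x\in V:\pm\zeta(x)\in\omega(V^{*})\}$ of $q$ agreeing with $h$ on $W_{0}$ --- this is the analytic continuation of $h$ along $\gamma$ past $q$. Finally $g$ has non-vanishing derivative, hence is conformal, so $f\circ g$ is harmonic near $q$ and equals $f\circ h=f$ on $W_{0}$; by unique continuation for harmonic maps, $f\circ g=f$ on a connected neighbourhood of $q$, so the continued map still satisfies the relation defining $h$.

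I expect the first step to be the main obstacle: producing $q^{*}$ as an honest point of $\Sigma$ and knowing that it and $q$ are regular points of $\Phi$, since only then are natural coordinates available at both ends --- and this is precisely where the two $\epsilon$-hypotheses are spent, the first keeping $q$ off $\mathcal{Z}$ and the second keeping $q^{*}$ off $\mathcal{Z}$. After that, the rigidity of holomorphic $\Phi$-isometries does the work, which is the sense in which ``analytic continuation of natural coordinates for $\Phi$ induces a continuation of $h$''; the only residual nuisance is tracking the connected components of $\Omega_{1}\cap V$ and $\Omega_{2}\cap V^{*}$, forced on us because natural coordinates are unique only up to $\zeta\mapsto\pm\zeta+\mathrm{const}$.
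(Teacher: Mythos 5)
Your proposal is correct and follows essentially the same route as the paper: use the $\Phi$-isometry and the two $\epsilon$-hypotheses to produce the limit point $q^{*}=h(q)$ away from $\mathcal{Z}$, extend $h$ by matching natural coordinates of $\Phi$ centred near $q$ and $q^{*}$ (the paper does this via the maximal-$\Phi$-disk radius $r_{p}$ at a nearby interior point $q'$, you do it via the affine rigidity $\zeta\mapsto\pm\zeta+c$ directly at the boundary points), and close with conformal invariance plus Aronszajn's unique continuation. One small remark: your parenthetical about shrinking $\Omega_{1}$ so that $\overline{\Omega_{1}}$ is compact does not by itself yield convergence of $h(\gamma(s))$, since the Cauchy sequence lives on the target side; your alternative --- that $d(h(\gamma(s)),\mathcal{Z})\geq\epsilon$ traps the tail in a $\Phi$-disk with compact closure --- is the argument that actually works and matches the paper's appeal to completeness of the $\Phi$-metric on compact sets away from $\mathcal{Z}$.
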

\begin{proof}
We can choose an arc on $\partial\Omega_1$ centered at $q$ on which $\Phi\neq 0$. We then connect the endpoints via an arc contained inside $\Omega_1$ so that the enclosed region $U$ is a topological disk. We pick these arcs in such a way that $$\textrm{min}\big \{\inf_{s\in U,t\in\mathcal{Z}}d(s,t),\inf_{s\in U,t\in\mathcal{Z}}d(h(s),t)\big \}\geq \epsilon/2.$$
The restriction of the $\Phi$-metric to any compact region that does not intersect $\mathcal{Z}$ is complete. As $h$ is an isometry in the $\Phi$-metric, we can extend it to a map $h:\overline{U}\to \overline{U}$. Therefore, we have a well-defined point $h(q)$.

For every point $p\not \in \mathcal{Z}$, there is a maximal radius $r_p$ such that we can extend any natural coordinate centered at $p$ to a $\Phi$-disk of radius $r_p$. $r_p$ does not depend on the initial choice of natural coordinate. If $d(s,t)=\delta$, then $$r_s-\delta \leq r_t\leq r_s + \delta.$$ Let $r_0=\textrm{min}\{r_q,r_{h(q)}\}$. Select a point $q'\in B_{r_0/4}(q)\cap\Omega_1$. This point satisfies $r_{q'}\geq 3r_0/4$ and likewise for $h(q')$. Let $\delta=d(q,q')$ and take a natural coordinate $z$ in a $\Phi$-disk $B_{\delta/2}(q')$. We restrict $h$ to this $\Phi$-disk, and as above, we use $h$ to build a natural coordinate $w$ on $B_{\delta/2}(h(q'))$. More precisely, we have a disk $D\subset \mathbb{C}$ of radius $\delta/2$ and two holomorphic maps $$\varphi: D\to B_{\delta/2}(q') \hspace{1mm} , \hspace{1mm}\psi: D\to B_{\delta/2}(h(q'))$$ such that $z=\varphi^{-1}$, $w=\psi^{-1}$. We can extend these maps to a larger disk $D'\subset \mathbb{C}$ with radius $3r_0/4$. The map $$w^{-1}\circ z:  B_{3r_0/4}(q')\to B_{3r_0/4}(h(q'))$$ is a holomorphic diffeomorphism that agrees with $h$ on $B_{\delta/2}(q')$. Since $B_{r_0/2}(q)\subset B_{3r_0/4}(q')$, we see we have analytically continued $h$ to the open set $\Omega_1\cup B_{r_0/2}(q)$. From conformal invariance, the map $f\circ h$ is harmonic, and hence the Aronszajn theorem \cite{Ar} implies $f\circ h = f$ on $\Omega_1\cup B_{r_0/2}(q)$.
\end{proof}
Via this result, we often find ourselves in the following situation: either $h$ can be continued along an entire curve $\gamma$, or we have a segment $\gamma'\subset\gamma$ along which $h$ has been continued but the endpoint of $h(\gamma')$ is a zero of $\Phi$.

We remark that there is no guarantee that the analytic continuation is a diffeomorphism. It is at least a local diffeomorphism and a local isometry for the $\Phi$-metric.

\subsection{Harmonic singularities.}
Toward the proof of the main theorem, we rule out possible pathological behaviour of harmonic maps near rank $1$ singularities. We need not delve too deep into the theory of singularities, but we invite the reader to see Wood's thesis \cite{Wood} and the paper \cite{wood2}, in which he studies singularities of harmonic maps between surfaces in detail.

Our key tool is the Hartman-Wintner theorem \cite{HW}, which gives a local representation formula for harmonic maps. Let $z$ be a holomorphic coordinate centered on a disk centered at $p\in \Sigma$ with $z(p)=0$, and let $(x_1,\dots, x_n)$ be normal (but not necessarily orthogonal) coordinates in a neighbourhood $U$ of $f(p)$ such that $f(p)=0$. According to the Hartman-Wintner theorem, we can write the components $(f^1,\dots, f^n)$ as $$f^k = p^k + r^k$$ where $p^k$ is a spherical harmonic (a harmonic homogeneous polynomial) of some degree $m<\infty$ and $r^k\in o(|z|^m)$. We are allowing $p^k=\infty$, which means $f^k=0$.

By permuting the coordinates, we may assume $\deg p^1 = \min_k \deg p^k$, and $\deg p^k\geq \deg p^2$ for all $k\geq 3$. Note $\deg p^1,\deg p^2<\infty$, for otherwise Sampson's result \cite[Theorem 3]{S} implies $f$ takes its image in a geodesic. 
\begin{lem}\label{lonely}
There does not exist a sequence of points $(p_n)_{n=1}^\infty\subset \Sigma$ converging to $p$ with the property that there exists a (not necessarily conformal) diffeomorphism $h_n$ taking a neighbourhood of $p_n$ to a neighbourhood of $p$ that leaves $f$ invariant.
\end{lem}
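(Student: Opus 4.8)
The plan is to argue by contradiction, splitting according to the rank of $df$ at $p$; the rank-one case is the substantial one, and I would attack it using the Hartman-Wintner expansion together with the flat structure of the $\Phi$-metric near a point where $\Phi\neq 0$. So suppose such a sequence $(p_n)$ and diffeomorphisms $h_n$ exist, with $h_n$ carrying $p_n$ to $p$. Differentiating $f\circ h_n=f$ at $p_n$ gives $df_{p_n}=df_p\circ(dh_n)_{p_n}$, so $f(p_n)=f(p)$ and $df_{p_n}$, $df_p$ have the same rank for every $n$. If $\operatorname{rank}df_p=2$, then $df_p$ is injective, so $f$ is a local embedding near $p$ and $f^{-1}(f(p))$ meets a small neighbourhood of $p$ only in $p$; since $f(p_n)=f(p)$ and $p_n\to p$, this forces $p_n=p$ eventually. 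If $df_p=0$, then $p$, and hence each $p_n$, is a branch point, so $p_n\in\mathcal Z$; but $\mathcal Z$ is discrete, so again $p_n=p$ eventually. Both outcomes contradict $p_n\to p$ with $p_n\neq p$, so $\operatorname{rank}df_p=1$, and then $\Phi(p)\neq 0$.

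In the rank-one case I would pass to a natural coordinate $\zeta=\xi+i\eta$ at $p$, so that $\Phi=d\zeta^2$ and $\langle f_\zeta,f_\zeta\rangle\equiv1$ on a neighbourhood $W$ of $p$. Taking real and imaginary parts gives $\langle f_\xi,f_\eta\rangle\equiv0$ and $\langle f_\xi,f_\xi\rangle=\langle f_\eta,f_\eta\rangle+4$ on $W$; hence $f_\xi$ never vanishes, $f_\eta(p)=0$, and $\ker df_p$ is the $\partial_\eta$-line. Writing the Hartman-Wintner expansion $f^k=p^k+r^k$ in the coordinate $\zeta$ and permuting the target coordinates so that $\deg p^1$ is minimal (hence equal to $1$), the rank-one condition makes the degree-one parts of the components proportional, and $f_\eta(p)=0$ forces $p^1$ to be a non-zero multiple of $\xi$. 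A linear change of target coordinates — legitimate since, by Sampson's theorem, it cannot collapse all of $f^2,\dots,f^n$ without putting the image in a geodesic — then brings $f$ into the form $f^1=c_1\xi+r^1$ with $c_1\neq 0$, $\deg p^k\ge2$ for $k\ge2$, and $m:=\deg p^2<\infty$. Since $df^1_p=c_1\,d\xi\neq0$, the level set $\gamma:=\{f^1=0\}$ is a $C^3$ arc through $p$, which near $p$ is a graph $\xi=\psi(\eta)$ over the $\eta$-axis with $\psi(0)=\psi'(0)=0$.

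From $f(p_n)=f(p)$ we get $f^k(p_n)=0$ for all $k$, so for large $n$ the point $p_n$ lies on $\gamma$. Set $F(\eta)=f(\psi(\eta),\eta)$; then $F(0)=f(p)$, and the contradiction to reach is that $F(\eta)\neq f(p)$ for small $\eta\neq0$. Inserting the Hartman-Wintner data (using $\zeta(\psi(\eta),\eta)=i\eta+O(\eta^2)$) yields, for each $k$ with $m_k:=\deg p^k$ finite, $f^k(\psi(\eta),\eta)=\re(c_k i^{m_k})\,\eta^{m_k}+o(\eta^{m_k})$, so it suffices to produce one index $k$ with $\re(c_ki^{m_k})\neq0$ — that is, one leading spherical harmonic $p^k$ not vanishing identically along the $\eta$-axis. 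The remaining possibility is that every $p^k$ vanishes along the $\eta$-axis; in that case I would combine $\langle f_\xi,f_\eta\rangle\equiv0$ with $f_\xi\neq0$ to deduce that $\gamma$ is exactly $\{\xi=0\}$ and that $f_\eta\equiv0$ along it, so that $f$ is constant on $\gamma$, and then rule this out for an admissible harmonic map via Sampson's theorem (or Aronszajn's unique continuation theorem).

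The hard part is precisely this last step — excluding the degenerate configuration in which every leading spherical harmonic of $f$ at a rank-one point vanishes along $\ker df_p$. Everything before it is essentially bookkeeping with the Hartman-Wintner formula and the flat $\Phi$-metric, but the degenerate case genuinely requires pinning down the finite order of vanishing of $f$ along $\gamma$, which is where the paper's closer analysis of harmonic singularities is needed.
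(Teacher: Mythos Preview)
Your case split and the rank-$2$ and rank-$0$ arguments are fine, but the rank-$1$ case has a genuine gap that you yourself flag, and your proposed patch does not close it. The implication ``every $p^k$ vanishes along the $\eta$-axis $\Rightarrow$ $\gamma=\{\xi=0\}$ and $f_\eta\equiv 0$ along $\gamma$'' is not justified: the curve $\gamma=\{f^1=0\}$ is determined solely by $f^1=c_1\xi+r^1$ and need not be the $\eta$-axis regardless of what the higher $p^k$ do, and $\langle f_\xi,f_\eta\rangle=0$ only says $f_\eta\perp f_\xi$, not $f_\eta=0$. Even granting that $f$ were constant along an arc, neither Sampson's theorem (infinite-order vanishing at a point) nor Aronszajn's theorem (agreement on an open set) applies to that situation. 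The underlying difficulty is that when $\re(c_ki^{m_k})=0$, Hartman--Wintner only gives $f^k|_\gamma=o(\eta^{m_k})$; the true leading behaviour along $\gamma$ is hidden in $r^k$, and you have no handle on it.

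The paper bypasses this entirely by a different and shorter argument. Instead of restricting $f$ to the single arc $\{f^1=0\}$ and trying to show non-constancy there, it selects a component $f^k$ with $m:=\deg p^k>1$ and invokes a lemma of Cheng producing a $C^1$ change of coordinates $\varphi$ on the source with $f^k\circ\varphi=p^k$ \emph{exactly}. The level set $\{q:f^k(q)=f^k(p)\}$ near $p$ is then the nodal set of the degree-$m$ spherical harmonic $p^k$: precisely $m$ $C^1$ arcs meeting transversally only at $p$. Since $f^k(p_n)=f^k(p)$, each $p_n$ lies on one of these arcs, at a smooth point through which only one arc passes; but $h_n$ is a diffeomorphism carrying the level set near $p_n$ onto the level set near $p$, forcing $m$ transverse arcs through $p_n$ as well --- a contradiction. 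This treats rank $0$ and rank $1$ uniformly and never encounters your degenerate configuration.
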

\begin{proof}
Arguing by contradiction, suppose there is such a sequence $(p_n)_{n=1}^\infty$. Since $f$ is an embedding near regular points, $p$ must be a singular point. Choose a coordinate $z$ on the source and normal coordinates on the target with $p=0$, $f(p) =0$. We apply Hartman-Wintner to obtain the formula $$f^k = p^k + r^k$$ with the same degree assumptions as above. It is clear that there is at least one $p^k$ with $\deg p^k = m >1$, $m\neq \infty$.

We invoke a result of Cheng \cite[Lemma 2.4]{C}: there is a $C^1$ diffeomorphism from a neighbourhood of $0$ in $\mathbb{R}^2$ to a neighbourhood of $p$, taking $0$ to $0$ in coordinates, and such that $$f^k\circ \varphi(w) = p^k(w)$$ As a spherical harmonic of degree $m$, the zero set of $p^k$ consists of $m$ distinct lines going through the origin, arranged so that the angles between two adjacent lines is constant (this is an easy consequence of homogeneity). Notice that in our neighbourhood of $p$, $$\{q: f^k(q) = f^k(p)\}= \{\varphi(w) : p^k(w) = p^k(0)\}.$$ Therefore, the set $\{q: f^k(q) = f^k(p)\}$ is  collection of $m$ disjoint $C^1$ arcs all transversely intersecting at the origin. For $n$ large enough, $p_n$ lies inside the coordinate chart determined by $\varphi$, and hence it lies on one of the arcs. Fixing such a $p_n$, we use that $h_n$ is a diffeomorphism to see that there should be $m-1$ more curves transversely intersecting the line containing $p_n$, and such that $f(q)=f(p)$ on those curves. This is a clear contradiction.
\end{proof}
\end{section}

\begin{section}{Holomorphic Factorization}
Throughout this section, we continue to assume $h:\Omega_1\to\Omega_2$ is a holomorphic diffeomorphism. Following the structure of Section 3 in \cite{GOR}, we prove Theorem \ref{main} holds for such $h$ (although the technical details of our proofs are for the most part quite different). 
\subsection{The equivalence relation}
\begin{defn}\label{sim} Given $p_1,p_2\in \Sigma$, we define a relation $\sim$ by
\begin{enumerate}
    \item If $p_1,p_2\not \in \Z$, $p_1\sim p_2$ if there exists open sets $\Omega_1,\Omega_2$ such that $p_i\in \Omega_i$ and a holomorphic diffeomorphism $h:\Omega_1\to \Omega_2$ such that $f=f\circ h$ on $\Omega_1$.
    \item If one of $p_1,p_2$ is a zero of $\Phi$, then for any pair of neighbourhoods $\Omega_i$ containing $p_i$ one can find smaller neighbourhoods $\Omega_i'\subset \Omega_i$ containing $p_i$ such that for each $q_1\in\Omega_1'\backslash \{p_1\}$ there exists $q_2\in\Omega_2'\backslash\{p_2\}$ such that $q_1\sim q_2$, and for each $q_2\in \Omega_2'\backslash\{p_2\}$ there is a $q_1\in \Omega_1'\backslash \{p_1\}$ such that $q_2\sim q_1$.
\end{enumerate}
\end{defn}
 If $p_1\sim p_2$ then $f(\Omega_1')=f(\Omega_2')$ and $f(p_1)=f(p_2)$ are apparent from the definition. Recall $\mathcal{Z}=\{p\in \Sigma: \Phi(p)\neq 0\}$.
 \begin{prop}\label{equiv}
$\sim$ is an equivalence relation.
\end{prop}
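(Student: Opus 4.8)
The plan is to verify reflexivity, symmetry, and transitivity of $\sim$, treating separately the cases where all points involved avoid $\Z$ and where some point lies in $\Z$. Reflexivity is immediate: for $p\notin\Z$ take $h=\id$ on a neighbourhood disjoint from $\Z$; for $p\in\Z$, given $\Omega\ni p$ one takes $\Omega'=\Omega$ (or any smaller disk) and uses reflexivity at the regular points $q\in\Omega'\setminus\{p\}$ via the identity map. Symmetry for regular points follows because $h:\Omega_1\to\Omega_2$ is a holomorphic diffeomorphism, so $h^{-1}:\Omega_2\to\Omega_1$ is also holomorphic and $f\circ h^{-1}=f$ on $\Omega_2$ (apply $f\circ h=f$ to $h^{-1}$); symmetry in the case involving a zero of $\Phi$ is built into clause (2), which is stated symmetrically in $p_1$ and $p_2$.

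The substance is transitivity. Suppose $p_1\sim p_2$ and $p_2\sim p_3$. The easiest sub-case is $p_1,p_2,p_3\notin\Z$: one has holomorphic diffeomorphisms $h:\Omega_1\to\Omega_2$ and $h':\Omega_2'\to\Omega_3$ with $f\circ h=f$, $f\circ h'=f$. Shrinking $\Omega_2$ and $\Omega_2'$ to a common neighbourhood $W$ of $p_2$ contained in both, restrict $h$ to $h^{-1}(W)$ and form $h'\circ h$, which is a holomorphic diffeomorphism from an open neighbourhood of $p_1$ onto an open neighbourhood of $p_3$ satisfying $f\circ(h'\circ h)=(f\circ h')\circ h=f\circ h=f$. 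The case where $p_1$ or $p_3$ (but not $p_2$) lies in $\Z$ reduces to this: say $p_1\in\Z$; given neighbourhoods, clause (2) for $p_1\sim p_2$ produces regular $q_1\in\Omega_1'\setminus\{p_1\}$ matched with regular $q_2$, and one then chains the regular-point relation $q_2\sim q_3$ coming from $p_2\sim p_3$ (note $p_2\notin\Z$, so $q_2$ can be taken close to $p_2$ where the diffeomorphism witnessing $p_2\sim p_3$ is defined), concluding $q_1\sim q_3$ and verifying clause (2) for $p_1\sim p_3$.

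The delicate sub-cases are those with $p_2\in\Z$, and this is where I expect the main obstacle. If $p_2\in\Z$ while $p_1,p_3\notin\Z$, then $p_1\sim p_2$ and $p_2\sim p_3$ are each witnessed by clause (2): in any neighbourhood of $p_2$ there are shrunken neighbourhoods matching punctured neighbourhoods of $p_1$ and of $p_3$ to punctured neighbourhoods of $p_2$ by the regular-point relation. The issue is that a priori the diffeomorphisms implicit in "$q_1\sim q_2$" for $p_1$-to-$p_2$ matching and in "$q_2\sim q_3$" for $p_2$-to-$p_3$ matching need not be restrictions of a single germ near $p_2$, so composing them to relate $p_1$ and $p_3$ directly is not formal. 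Here I would invoke the analytic-continuation machinery, Lemma \ref{isometry} and Proposition \ref{cont}: since $h$ is a local isometry in the $\Phi$-metric, the local diffeomorphisms near regular points arising from $\sim$ extend along curves avoiding $\Z$, so a matching germ near one regular point $q_2$ propagates around a punctured $\Phi$-disk neighbourhood of $p_2$; combined with Lemma \ref{lonely} (which rules out isolated points being matched to the singular point $p_2$ in a degenerate way), one checks that the two matchings near $p_2$ are compatible after a further shrinking, so that for each $q_1$ near $p_1$ one can locate $q_3$ near $p_3$ with $q_1\sim q_3$ via a composed diffeomorphism, and symmetrically. Finally, the case $p_1,p_3\in\Z$ with $p_2$ arbitrary is handled by the same mechanism: if $p_2\notin\Z$ it is the chaining argument of the previous paragraph applied from both ends, and if $p_2\in\Z$ one iterates the $\Phi$-metric continuation argument, using that $\Z$ is discrete so all three singular points have punctured $\Phi$-disk neighbourhoods on which the regular-point relation is governed by isometries. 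The one technical point to be careful about is that analytic continuation of $h$ around a puncture may fail to close up to a global diffeomorphism of the punctured disk (as noted after Proposition \ref{cont}); but for transitivity we only need existence of a matching regular point $q_3$ for each $q_1$, not a single global map, so this multivaluedness is harmless.
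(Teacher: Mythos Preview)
Your outline is right up to the point where you isolate the case $p_2\in\Z$, $p_1,p_3\notin\Z$ as the substantive one. But your treatment of that case has a genuine gap.

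Since $p_1,p_3\notin\Z$, what must be verified is clause~(1) of Definition~\ref{sim}: a \emph{single} holomorphic diffeomorphism $h:\Omega_1\to\Omega_3$ between neighbourhoods of $p_1$ and $p_3$ with $f\circ h=f$. Your concluding remark that ``for transitivity we only need existence of a matching regular point $q_3$ for each $q_1$, not a single global map'' is therefore mistaken in exactly this case --- that is the content of clause~(2), which does not apply here. Producing, for each $q_1$ near $p_1$, some $q_3$ near $p_3$ with $q_1\sim q_3$ only yields (taking $q_1=p_1$) that $p_1$ is equivalent to \emph{some} point near $p_3$, not to $p_3$ itself.

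The paper's argument does not ``propagate around $p_2$''; after using the definitions to obtain a single pair $p_1'\sim p_3'$ of regular points (with $p_i'$ in a small $\Phi$-disk $U_i$ about $p_i$ containing no zeros and no points $\sim p_i$), it takes the straight $\Phi$-geodesic $\gamma$ from $p_3'$ to $p_3$ and analytically continues the germ $h$ along $\gamma$ via Proposition~\ref{cont}. Because $h$ is a $\Phi$-isometry (Lemma~\ref{isometry}), the image $h(\gamma)$ has the same short length and hence stays inside the zero-free disk $U_1$, so the continuation reaches the endpoint and gives $p_3\sim h(\gamma(1))$ with $h(\gamma(1))\in U_1$. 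The last step --- forcing $h(\gamma(1))=p_1$ --- is where Lemma~\ref{lonely} enters, and it is applied at $p_1$ and $p_3$, not at $p_2$: if the endpoint could miss $p_1$ for every choice of radius, one obtains a sequence $q_n\to p_1$ with $q_n\sim p_3$, and then (via transitivity at regular points) a sequence converging to $p_3$ and equivalent to $p_3$, contradicting Lemma~\ref{lonely}. Your invocation of Lemma~\ref{lonely} ``at the singular point $p_2$'' does not accomplish this.

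A smaller point: when $p_1,p_3\in\Z$ the paper needs no analytic continuation at all --- the verification of clause~(2) for $p_1\sim p_3$ is a direct definition-chase (compose the two nested systems of neighbourhoods through $p_2$ and use regular-point transitivity on the matched punctured points), so your plan to ``iterate the $\Phi$-metric continuation argument'' there is unnecessary.
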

\begin{proof}
Reflexivity and symmetry are obvious. As for transitivity, this is clear if $p_1,p_2,p_3$ are all not zeros of $\Phi$. If at least one is a zero, we consider two cases:
\begin{enumerate}[label=(\roman*)]
    \item $p_1,p_3$ are zeros, or
    \item $p_2$ is a zero while $p_1,p_3$ are not
\end{enumerate}
The other cases are trivial. Case (i) can be seen from the definitions: take $\Omega_1,\Omega_2$ containing $p_1,p_2$ respectively such that for all $p_1'\in\Omega_1\backslash\{p_1\}$ there exists $p_2'\in\Omega_2\backslash\{p_2\}$ with $p_1'\sim p_2'$. Within $\Omega_2$ we find an open set $\Omega_2'$, and then an open set $\Omega_3'$ containing $p_3$ with the same property. Set $$\Omega_1'=\{p_1'
\in\Omega_1\backslash\{p_1\}: \textrm{ there exists }p_3'\in\Omega_3' \textrm{ such that }p_1'\sim p_3'\}\cup \{p_1\}.$$ We can find an open disk centered at $p_1$ inside $\Omega_1'$ by applying the definition of $\sim$ to the open sets $\Omega_1,\Omega_2'$. It is also clear that $\Omega_1'$ is open away from $p_1$, and hence it is open. It is now simple to check that $\Omega_1'$ and $\Omega_3'$ satisfy the definition of $\sim$.

The second case requires more work. Select $\Phi$-disks $U_1,U_3$ of radius $R>0$ around $p_1$ and $p_3$ respectively such that there are no points $q_i$ with $q_i\sim p_i$ and no zeros of $\Phi$. Let $U_1',U_3'$ be $\Phi$-disks centered at the same points with half the radius. Using $\sim$, we can find open sets $\Omega_i\subset U_i'$ containing $p_i$ such that $f(\Omega_3)\subset f(\Omega_1)$ and every point in $q\in\Omega_3\backslash\{p_3\}$ is equivalent to a point in $\Omega_1\backslash\{p_1\}$. We shrink $\Omega_3$ to turn it into an open disk in the $\Phi$-metric centered at $p_3$ with radius $\delta<R/2$.

Let $p_i'\in \Omega_i$ be such that $p_3'\sim p_1'$. Viewing $\Omega_3$ in natural coordinates, let $\gamma$ be the straight line from $p_3'$ to $p_3$. We have a holomorphic map $h$ taking a neighbourhood of $p_3'$ to one of $p_1'$ that leaves $f$ invariant. We analytically continue along $\gamma$ as much as we can. Either $h(\gamma)$ hits a zero of $\Phi$ or we can continue up until the endpoint. The $\Phi$-length of any segment of $h(\gamma)$ is at most $\delta$, and we infer $h(\gamma)$ is contained in $B_{R/2+\delta}(p_2)\subset U_3$. Thus, $h(\gamma(t))$ can never be a zero for any time $t$, and we can continue to the endpoint. From the proof of Proposition \ref{cont}, $p_3=\gamma(1)$ is equivalent to the endpoint $h(\gamma(1))$.

To finish the proof, we need to argue $h(\gamma(1))=p_1$. Let $q_1=h(\gamma(1))$. We do know $p_3\sim q_1$. We claim we could have chosen $R$ small enough to ensure no point other than possibly $p_1$ is equivalent to $p_3$. Indeed, if this is not possible, then we get a sequence of points $(q_n)_{n=1}^\infty$ converging to $p_1$ such that $p_3\sim q_n$ for all $n$. Using transitivity of $\sim$ for points in $\Sigma\backslash\Z$, we can then construct a sequence of points $q_n'$ converging to $p_3$ that are all equivalent to $p_3$. This directly contradicts Lemma \ref{lonely} and completes the proof.
\end{proof}
We use Proposition \ref{equiv} to prove another useful property of $\sim$.

\begin{lem}\label{nofriends}
Suppose $p_1,p_2\not \in \Z$. Then there is no sequence $(q_n)_{n=1}^\infty$ such that $q_n\sim p_1$ for all $n$ and $q_n\to p_2$ as $n\to\infty$.
\end{lem}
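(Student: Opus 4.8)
The plan is to assume that such a sequence exists and to derive a contradiction with Lemma \ref{lonely}. Passing to a subsequence I may assume the $q_n$ are pairwise distinct, and, discarding one term if necessary, that $q_n\neq p_2$ for all $n$. Since $q_n\sim p_1$, hence $p_1\sim q_n$, for each $n$ there are neighbourhoods $U_n\ni p_1$, $V_n\ni q_n$ and a holomorphic diffeomorphism $h_n\colon U_n\to V_n$ with $h_n(p_1)=q_n$ and $f\circ h_n=f$; by Lemma \ref{isometry} it is a local $\Phi$-isometry, legitimately so because $p_1\notin\Z$ and, since $q_n\to p_2\notin\Z$, also $q_n\notin\Z$ for $n$ large. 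The gain from using symmetry of $\sim$ is that all these local symmetries are based at the single \emph{fixed} point $p_1$, which is what will allow a limiting argument.

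First I would spread each $h_n$ over a $\Phi$-disk of fixed radius. Fix $\rho>0$ so small that $B_{2\rho}(p_1)$ and $B_{2\rho}(p_2)$ contain no zeros of $\Phi$; then for $n$ large $B_\rho(q_n)\subset B_{2\rho}(p_2)$ contains no zeros either. Let $w_1$ be the natural coordinate on $B_\rho(p_1)$ centred at $p_1$ and $w_n$ that on $B_\rho(q_n)$ centred at $q_n$. Near $p_1$, the map $h_n$ is the restriction of the rotation $z\mapsto a_nz$, $|a_n|=1$, and this formula extends it to a holomorphic $\Phi$-isometry $\hat h_n\colon B_\rho(p_1)\to B_\rho(q_n)$ with $\hat h_n(p_1)=q_n$; this is precisely the analytic continuation of Proposition \ref{cont} along radial segments out of $p_1$ (whose $\hat h_n$-images have $\Phi$-length $<\rho$ and start at $q_n$, hence stay in $B_\rho(q_n)$), and $f\circ\hat h_n=f$ on all of $B_\rho(p_1)$ by the Aronszajn step in that proof. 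Now fix once and for all a natural coordinate $w$ on $B_{2\rho}(p_2)$ centred at $p_2$. Since the natural coordinates $w_n$ and $w|_{B_\rho(q_n)}$ for $\Phi$ on $B_\rho(q_n)$ are related by $w=\pm w_n+w(q_n)$, in the coordinates $w_1$ and $w$ the map $\hat h_n$ has the affine form $z\mapsto A_nz+w(q_n)$ with $|A_n|=1$. As $w(q_n)\to w(p_2)=0$, after a further subsequence $A_n\to A$ with $|A|=1$, and $\hat h_n\to\hat h$ uniformly on compacta, where $\hat h\colon B_\rho(p_1)\to B_\rho(p_2)$ is the holomorphic $\Phi$-isometry $z\mapsto Az$; in particular $\hat h(p_1)=p_2$, and letting $n\to\infty$ in $f\circ\hat h_n=f$ gives $f\circ\hat h=f$, hence $f\circ\hat h^{-1}=f$ on $B_\rho(p_2)$.

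To conclude I would transport the configuration back to $p_1$. Set $p_n':=\hat h^{-1}(q_n)$, which is defined for large $n$ since $q_n\to p_2$ lies in the open set $\hat h(B_\rho(p_1))$; then $p_n'\to\hat h^{-1}(p_2)=p_1$, with $p_n'\neq p_1$ (because $q_n\neq p_2$) and the $p_n'$ pairwise distinct. As $\hat h_n\to\hat h$ uniformly near $p_1$, the composite $g_n:=\hat h^{-1}\circ\hat h_n$ is defined on a neighbourhood of $p_1$ for all large $n$; it is a holomorphic diffeomorphism onto a neighbourhood of $g_n(p_1)=\hat h^{-1}(q_n)=p_n'$, and $f\circ g_n=(f\circ\hat h^{-1})\circ\hat h_n=f\circ\hat h_n=f$. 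Hence $g_n^{-1}$ is a holomorphic diffeomorphism taking a neighbourhood of $p_n'$ onto a neighbourhood of $p_1$ and leaving $f$ invariant, while $p_n'\to p_1$ with $p_n'\neq p_1$. This is exactly the configuration ruled out by Lemma \ref{lonely}, giving the contradiction.

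The step I expect to be the genuine obstacle is the middle one. A naive limit of the $h_n$ is meaningless, since their domains could a priori collapse onto a point, so the analytic continuation of Proposition \ref{cont}---equivalently, the rigidity of holomorphic maps that are $\Phi$-isometries, which forces the explicit affine form above---is what spreads them over a disk of \emph{fixed} radius; and one must carefully compare the moving natural coordinates on the targets $B_\rho(q_n)$ with a single fixed coordinate near $p_2$ for the limit $\hat h$ to exist. I would emphasize that the rank $1$ singularities of $f$ flagged in the introduction play no direct role: the entire argument takes place in the flat $\Phi$-metric, where the maps involved are genuine isometries, and the only appeal to the fine local behaviour of $f$ at a possibly singular point is the black box Lemma \ref{lonely}.
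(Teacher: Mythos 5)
Your proof is correct and takes a genuinely different route from the paper. The paper's argument first rules out $p_1\sim p_2$ (by transporting the $q_n$ through the relating map and invoking Lemma~\ref{lonely}), then iterates the analytic--continuation device of Proposition~\ref{cont} with shrinking radii to manufacture a companion sequence $q_n'\to p_1$ with $q_n'\sim p_2$, and finally derives a contradiction via a level-set analysis of a component $f^k$ near both $p_1$ and $p_2$, using the Hartman--Wintner expansion together with Cheng's coordinate lemma to read off the arc structure of $\{f^k=f^k(p_1)\}$. You instead exploit the rigidity of $\Phi$-isometries directly: in natural coordinates centred at $p_1$ and $q_n$ the condition $h_n^*\Phi=\Phi$ together with $h_n(p_1)=q_n$ forces $h_n$ to be $z\mapsto a_n z$ with $a_n=\pm1$ (a stronger constraint than your stated $|a_n|=1$, which only simplifies the subsequence extraction), which automatically spreads $h_n$ over a $\Phi$-disk of fixed radius $\rho$; a compactness/limit argument then produces a genuine holomorphic $\Phi$-isometry $\hat h$ with $\hat h(p_1)=p_2$ and $f\circ\hat h=f$, so in fact $p_1\sim p_2$, and pulling the $q_n$ back through $\hat h^{-1}$ contradicts Lemma~\ref{lonely} at once. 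This is cleaner: it unifies the two cases $p_1\sim p_2$ and $p_1\not\sim p_2$, which the paper treats separately, and it eliminates the spherical-harmonic/level-set step entirely---the only appeal to the local structure of $f$ at a singular point is the black-box Lemma~\ref{lonely}, exactly as you observe. One point worth stating explicitly: as written the lemma would be falsified by the eventually constant sequence $q_n=p_2$ whenever $p_1\sim p_2$, so your reduction to pairwise distinct $q_n\neq p_2$ is not merely a convenience but is part of the intended reading of the statement.
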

\begin{proof}
Again going by way of contradiction, assume such a sequence $q_n$ exists. Firstly, by Lemma \ref{lonely}, we cannot have $p_1\sim p_2$. Using the definition of $\sim$, we see that in any pair of neighbourhoods $\Omega_i$ of $p_i$, we can find points $p_i'\in \Omega_i$ such that $p_1'\sim p_2'$. 

Let $\delta,\epsilon>0$ and $\tau = \epsilon+2\delta$. We choose $\delta,\epsilon$ to be small enough to ensure
\begin{enumerate}[label=(\roman*)]
    \item there is no point equivalent to $p_1$ in $B_\tau(p_1)\backslash\{p_1\}$,
    \item there is no point equivalent to $p_2$ in $B_\delta(p_2)\backslash\{p_2\}$, and
    \item there are no zeros of the Hopf differential in either ball.
\end{enumerate}
Choose $p_1'\in B_\epsilon(p_1)$ that is equivalent to a point $p_2'\in B_\delta(p_2)$. In natural coordinates, let $\gamma$ be the straight line path from $p_2'$ to $p_2$. $\gamma$ has length at most $\delta$, and hence the image of any segment of $\gamma$ along an analytic continuation of $h$ lies in $B_\tau(p_1)$. Thus, we can continue $h$ along $\gamma$ as much as we like, and we extend to the boundary point $p_2$. The endpoint $h(\gamma(1))$ is then equivalent to $p_2$. Since $p_1\not\sim p_2$, $h(p_2)\neq p_1$. 

Set $q_1'=h(p_2)$. Replace $\delta,\epsilon,\tau$ with smaller numbers  $\delta',\epsilon',\tau'$ satisfying the same relations as above and $q_1\not \in B_{\tau'}(p_1)$. By repeating the previous procedure we secure another point $q_2'\sim p_2$ that is closer to $p_1$. Continuing in this way, we can build a sequence $(q_n')_{n=1}^\infty$ converging to $p_1$ such that $q_n'\sim p_2$ for all $n$.

We now find our contradiction. Given that both such sequences exist, $f$ cannot be an embedding around $p_1$ nor $p_2$ and has rank $1$ at both points. Choose normal coordinates on $M$ centered at $f(p_1)=f(p_2)$, and a conformal coordinate centered at $p_1$ in which $f$ takes the form $$f^k = p^k + r^k$$ as in the previous subsection. Since $f$ is not regular at $p_1$, there is at least one $k$ such that $\deg p^k = m >1$, $m\neq \infty$. Choosing a conformal coordinate at $p_2$, $f$ takes the form $$f^k = \tilde{p}^k + \tilde{r}^k$$ with $\tilde{p}^k$ a spherical harmonic and $\tilde{r}^k$ decaying faster. The images of $p^k$ and $\tilde{p}^k$ in $\mathbb{R}$ intersect on open sets, so $\tilde{p}^k$ is clearly non-zero. Thus, the set of points near $p_2$ on which $f^k$ is equal to $f^k(p_1)$ is some collection of arcs intersecting at that point. However, since $\deg p^k > 1$, we can find the same contradiction as in Lemma \ref{lonely}.
\end{proof}

 \subsection{The Hausdorff condition}  The main result of this subsection is Proposition \ref{Hausdorff}, which implies the topological quotient of $\Sigma$ by $\sim$ is Hausdorff. We say $p_1\sim' p_2$ if for every pair of neighbourhoods $U_i$ containing $p_i$, there exists $p_i'\in U_i$ with $p_1'\sim p_2'$.
 \begin{prop}\label{Hausdorff}
Suppose $p_1\sim' p_2$. Then $p_1\sim p_2$.
\end{prop}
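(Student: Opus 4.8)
The plan is to promote the approximating equivalences furnished by the hypothesis $p_1\sim'p_2$ to a genuine one, using analytic continuation of holomorphic diffeomorphisms (Proposition \ref{cont}) together with the fact that such maps are local isometries of the $\Phi$-metric (Lemma \ref{isometry}). I would split into the case $p_1,p_2\notin\Z$ and the case in which one of $p_1,p_2$ lies in $\Z$. For the first case: since $\Z$ is discrete, fix $r>0$ so that the $\Phi$-disks $B_{3r}(p_1)$ and $B_{3r}(p_2)$ contain no zeros of $\Phi$, and fix natural coordinates $z$ on $B_{3r}(p_1)$ and $\zeta$ on $B_{3r}(p_2)$, centred at $p_1$ and $p_2$. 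The hypothesis supplies $a_n\to p_1$, $b_n\to p_2$ with $a_n\sim b_n$; for $n$ large $a_n,b_n\notin\Z$, so this equivalence comes from a holomorphic diffeomorphism $h_n$ near $a_n$ with $f\circ h_n=f$. Continuing $h_n$ along straight segments of the natural coordinate emanating from $a_n$: since $h_n$ preserves $\Phi$-length and $a_n\to p_1$, $b_n\to p_2$, for $n$ large the image of a segment reaching a point of $B_r(p_1)$ stays inside the zero-free disk $B_{3r}(p_2)$, so by Proposition \ref{cont} the continuation never stops; the monodromy theorem on the simply connected $B_r(p_1)$ then yields a single-valued holomorphic local isometry $\tilde h_n\colon B_r(p_1)\to B_{3r}(p_2)$ with $f\circ\tilde h_n=f$ (the identity persisting by Aronszajn's theorem \cite{Ar}, as in the proof of Proposition \ref{cont}). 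By Lemma \ref{isometry} and orientation, $\tilde h_n$ is the translation $z\mapsto z+c_n$ in the coordinates $z,\zeta$, and evaluation at $a_n$ gives $c_n=\zeta(h_n(a_n))-z(a_n)\to 0$. Hence $\tilde h_n\to h_\infty$ uniformly, where $h_\infty$ is the biholomorphism $z\mapsto z$ in coordinates; it satisfies $h_\infty(p_1)=p_2$ and, in the limit, $f\circ h_\infty=f$, so $p_1\sim p_2$. (Alternatively, $\tilde h_n$ already exhibits $p_1\sim\tilde h_n(p_1)$ with $\tilde h_n(p_1)\to p_2$, and one finishes with Lemma \ref{nofriends}.)

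For the case $p_1\in\Z$ I would verify clause (2) of Definition \ref{sim} directly: given $\Omega_i\ni p_i$, take $\Omega_i'\subseteq\Omega_i$ to be small $\Phi$-disks about $p_i$ whose punctures contain no zeros of $\Phi$; extract from $p_1\sim'p_2$ a single equivalence $a\sim b$ with $a\in\Omega_1'\setminus\{p_1\}$ (hence $a\notin\Z$) and $b$ near $p_2$ — if the point produced by $\sim'$ is $p_1$ itself, one first unpacks clause (2) for $p_1\sim b$ to obtain such an $a$. Then for each $q_1\in\Omega_1'\setminus\{p_1\}$, continue the holomorphic diffeomorphism at $a$ along a path from $a$ to $q_1$ inside the connected punctured disk $\Omega_1'\setminus\{p_1\}$, in the spirit of the transitivity argument in Proposition \ref{equiv}(ii). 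Since $\Omega_1'$ is small the image path has small $\Phi$-length, so it avoids the zeros of $\Phi$ and the continuation reaches the endpoint, showing $q_1$ is equivalent to a point of $\Omega_2'\setminus\{p_2\}$; the reverse matching follows symmetrically by continuing from the $p_2$ side. Letting $a,b$ tend to $p_1,p_2$ also gives $f(p_1)=f(p_2)$.

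The hard part is this last case. The obstacle is to keep the continued image away from the zero $p_2$ while $q_1$ is allowed to wind around the cone point of the $\Phi$-metric at $p_1$: the monodromy of the continuation around $p_1$ must be reconciled with the cone structure at $p_2$ (equivalently, with the order of vanishing of $\Phi$ there via $h_\infty^*\Phi=\Phi$), which is exactly where ramification enters — a zero of $\Phi$ may legitimately be $\sim$-equivalent to a non-zero, as at a false branch point — so one is forced either to match cone angles or, when the limiting map is itself branched, to argue slightly differently. Degenerate configurations in which $f$ fails to be an embedding near $p_1$ or $p_2$ are excluded by Lemmas \ref{lonely} and \ref{nofriends}, while away from $\Z$ the normal-family and continuation steps are routine given Proposition \ref{cont} and Lemma \ref{isometry}.
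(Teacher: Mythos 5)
Your first case ($p_1,p_2\notin\Z$) is essentially sound and runs parallel to the paper's, which also analytically continues a nearby equivalence $p_1'\sim p_2'$ to the centres and then uses Lemma~\ref{nofriends} to identify the endpoint with $p_1$; your normal-family limit is a legitimate variant of the same idea. The second case, however, has a genuine gap, and it sits exactly where you hedge at the end.

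The claim that ``since $\Omega_1'$ is small the image path has small $\Phi$-length, so it avoids the zeros of $\Phi$ and the continuation reaches the endpoint'' does not follow. The continued image starts near $p_2$, which is (or may be) the zero, and a short path issuing from a point near a zero is precisely the configuration that can run into the zero; Proposition~\ref{cont} gives no protection once the image approaches $\Z$. The paper faces this squarely: in Lemma~\ref{step} it records that the only obstruction to the continuation is a segment of $h(\gamma)$ touching $p_2$, and then proves Lemma~\ref{below}, which shows one can shrink the $\Phi$-disks so that no $q\in B_\delta(p_1)\setminus\{p_1\}$ satisfies $q\sim'p_2$ — so in particular no continued image can terminate at $p_2$. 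The proof of Lemma~\ref{below} is genuinely substantive: it first transfers $q\sim'p_2$ into $q\sim'p_1$ via an auxiliary continuation, then assumes for contradiction a sequence $q_n\to p_1$ with $q_n\sim'p_1$, and finally exploits Strebel's description of $\Phi$-geodesics at a cone point (radial segments meeting at angle at least $2\pi/(n+2)$) together with a pigeonhole into angular sectors around $p_1$ to produce a point equivalent to, yet distinct from, some $q_n$, contradicting that $B_{4\delta'}(q_n)$ contains no point equivalent to $q_n$. Your proposed escapes do not close this: ``match cone angles'' is a heuristic, not an argument; Lemma~\ref{nofriends} is stated only for $p_1,p_2\notin\Z$; and Lemma~\ref{lonely} excludes a sequence converging to a point and equivalent to that same point, which is a different obstruction from the continued image hitting $p_2$. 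Without something of the strength of Lemma~\ref{below}, the analytic continuation in the second case is not justified and the Hausdorff property does not follow.
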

Proposition \ref{Hausdorff} is our ``holomorphic unique continuation property." Combined with \cite[Theorem 3]{S}, Proposition \ref{Hausdorff} implies the following result of independent interest.
\begin{prop}\label{hUCP}
Let $\mathbb{D}\subset \mathbb{R}^2$ be the unit disk. Suppose $u_1,u_2:\mathbb{D}\to M$ are harmonic maps maps such that, for all open sets $D_1\subset \mathbb{D}$ containing $0$, there is an open subset $D_2\subset\mathbb{D}$ (possibly not containing $0$) such that $u^2(D_2)\subset u^1(D_1)$. Moreover, assume that for any subsets $D_i'\subset D_i$ on which $u_i$ is regular such that $u_2(D_2')\subset u_1(D_1')$, the map $u_2^{-1}|_{u_1(D_1')}\circ u_1|_{D_1'}$ is holomorphic. Then there exists an open subset $D'\subset\mathbb{D}$ containing $0$ such that $u^2(D')\subset u^1(\mathbb{D})$.
\end{prop}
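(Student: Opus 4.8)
The plan is to apply the machinery of Section 3 to the disconnected Riemann surface $\Sigma = \mathbb{D}_1\sqcup\mathbb{D}_2$ made of two disjoint copies of $\mathbb{D}$ (with centres $0_1,0_2$) carrying the harmonic map $f:=u_1\sqcup u_2$; conformal invariance of the harmonic map equation in dimension two lets us put the Euclidean metric on each copy. First I would dispose of degenerate cases using Sampson's theorem \cite[Theorem 3]{S}: if $u_1$ or $u_2$ is constant the claim is trivial; if the image of one of them lies in a geodesic, then by the nesting hypothesis and Aronszajn unique continuation \cite{Ar} so does the image of the other, both maps become non-constant real harmonic functions, and the statement follows by elementary means (this degenerate case, in which the holomorphicity hypothesis is vacuous, is the one point where some care with the statement is needed). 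Otherwise the regular sets of $u_1,u_2$ are open and dense and $f$ is admissible; if moreover both maps are minimal then the Hopf differential $\Phi$ of $f$ vanishes and the conclusion is exactly Proposition \ref{UCP} (that is, \cite[Lemma 2.10]{GOR}). So it remains to treat the case $\Phi\not\equiv 0$, where $\sim$, $\mathcal{Z}$, Proposition \ref{cont}, Lemma \ref{nofriends} and Proposition \ref{Hausdorff} are all available on $\Sigma$.

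In that case the strategy is: (i) turn the hypotheses into a $\sim$-relation between points near $0_1$ and points of $\mathbb{D}_2$; (ii) transport it across $\mathbb{D}_2$ to a neighbourhood of $0_2$; (iii) invoke Proposition \ref{Hausdorff}. For (i), fix disks $D_1^{(k)}\downarrow\{0_1\}$ and, by the nesting hypothesis, open sets $D_2^{(k)}\subseteq\mathbb{D}_2$ with $u_2(D_2^{(k)})\subseteq u_1(D_1^{(k)})$. Since the two-dimensional $u_2$-image of a small regular patch in $D_2^{(k)}$ cannot be contained in the $u_1$-image of the lower-dimensional singular set of $u_1$, one may pick a regular point $x_k\in D_2^{(k)}$ of $u_2$ whose value has a regular $u_1$-preimage $y_k\in D_1^{(k)}$; a coincidence-of-sheets argument (two embedded two-dimensional patches, one inside the other, must agree) then yields neighbourhoods $W_k\ni x_k$ and $V_k\ni y_k$ with $u_2(W_k)=u_1(V_k)$, and the holomorphicity hypothesis says the map $h_k\colon W_k\to V_k$ determined by $u_1\circ h_k=u_2$ (hence $f\circ h_k=f$) is a holomorphic diffeomorphism. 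Thus $x_k\sim y_k$ with $y_k\to 0_1$.

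Step (ii) is the heart. Taking $k$ large, so that the image of $h_k$ starts inside the tiny $\Phi$-disk $D_1^{(k)}$ about $0_1$ (well inside $\mathbb{D}_1$ in the $\Phi$-metric), I would analytically continue $h_k$ along a $\Phi$-geodesic in $\mathbb{D}_2$ running from $x_k$ toward $0_2$. By Lemma \ref{isometry} the continuation is a $\Phi$-isometry, so Proposition \ref{cont} lets it proceed until the source path or its image meets $\mathcal{Z}$; when the image runs into a zero $\zeta\in\mathcal{Z}\cap\mathbb{D}_1$, one passes it using case (2) of the definition of $\sim$ together with Proposition \ref{Hausdorff} at $\zeta$, exactly as branch points are handled in the proof of Proposition \ref{equiv}, while Lemma \ref{nofriends} rules out the accumulation that would obstruct closing the continuation up. Reaching the endpoint --- or, after a limiting argument over $k$, a point $p\in\mathbb{D}_1$ with $p\sim' 0_2$ --- and upgrading $p\sim' 0_2$ to $p\sim 0_2$ via Proposition \ref{Hausdorff}, one reads off from the definition of $\sim$ (case (1) if $0_2\notin\mathcal{Z}$, case (2) otherwise) an open $D'\ni 0_2$ with $u_2(D')\subseteq u_1(\mathbb{D})$.

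The main obstacle is exactly step (ii): the conclusion is genuinely asymmetric --- it concerns $u_2$ near $0_2$, whereas the hypotheses only locate correspondences near $0_1$ --- so one must actually carry a correspondence across all of $\mathbb{D}_2$. The two failure modes are that the image of the continuation escapes through $\partial\mathbb{D}_1$ before the source reaches $0_2$, and that $0_2$ is itself a zero of $\Phi$; the first must be controlled by choosing $k$ large (pushing the starting point of the image deep into $\mathbb{D}_1$ in the $\Phi$-metric) together with a compactness argument in $k$, and the second by the branch-point technology already developed. Verifying that the continuation genuinely reaches a neighbourhood of $0_2$ rather than stalling is where essentially all of the work lies.
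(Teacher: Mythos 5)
Your framework --- run the Section 3 machinery on the disconnected surface $\Sigma=\mathbb{D}_1\sqcup\mathbb{D}_2$ with $f=u_1\sqcup u_2$, dispose of degenerate cases with Sampson and Aronszajn, fall back to \cite{GOR} when $\Phi\equiv0$, and otherwise reduce to Proposition~\ref{Hausdorff} --- is precisely what the paper intends; the paper's ``proof'' is nothing more than the sentence that \cite[Theorem 3]{S} and Proposition~\ref{Hausdorff} together imply the result. So the question is whether your fill-in of the details actually closes, and as you concede, it does not: step (ii) is a genuine gap, and the mechanism you propose for it fails.

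Concretely: the hypothesis produces pairs $y_k\sim x_k$ with $y_k\to 0_1$ in $\mathbb{D}_1$ but places no constraint whatsoever on the location of $x_k\in\mathbb{D}_2$; the point $0_2$ is entirely absent from the hypothesis even though it controls the conclusion. Your remedy is to analytically continue $h_k$ along a $\Phi$-geodesic from $x_k$ toward $0_2$, pushing the starting image $y_k$ deep into $\mathbb{D}_1$ by taking $k$ large. This cannot be made to work. By Lemma~\ref{isometry} the continued $h_k$ is a local $\Phi$-isometry, so the image path has $\Phi$-length equal to that of the source path, namely $d(x_k,0_2)$ in $\mathbb{D}_2$; this quantity has no reason to shrink with $k$, and Proposition~\ref{cont} requires the \emph{image} of the continuation to stay a definite distance from $\partial\mathbb{D}_1$ as well as from $\mathcal{Z}$. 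Pushing $y_k$ deeper into $\mathbb{D}_1$ does nothing to shorten the image path, so there is no mechanism preventing the continuation from escaping through $\partial\mathbb{D}_1$ long before the source reaches $0_2$. The straightforward use of Proposition~\ref{Hausdorff} is rather by compactness: pass to a subsequence $x_k\to q$, note $0_1\sim' q$, and conclude $0_1\sim q$, whence a neighbourhood of $q$ has $u_2$-image inside $u_1(\mathbb{D})$. But that neighbourhood surrounds $q$, not $0_2$, and nothing in the hypothesis forces $q=0_2$ (nor even keeps $q$ off $\partial\mathbb{D}_2$). You should either identify a feature of the statement that pins $q$ to $0$ --- I do not see one --- or recognize that the argument you are running delivers the conclusion only for a neighbourhood of the accumulation point $q$, which is a weaker statement than what is written.
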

Turning toward the proof of Proposition \ref{Hausdorff}, if $p_1$ and $p_2$ are both not zeros of $\Phi$, then one can follow the argument from the proof of Proposition \ref{equiv}, almost word-for-word, up until the last paragraph. We just need to note that Lemma \ref{nofriends} shows we can choose a $\Phi$-disk surrounding $p_1$ that is small enough that it contains no point equivalent to $p_2$. . 

Going forward, we assume at least one of the two points is a zero of $\Phi$. The main step in the proof is the next lemma.
\begin{lem}\label{step}
There exists $\delta,\tau>0$ such that every $p_1'\in B_\delta(p_1)\backslash\{p_1\}$ is equivalent to a point $p_2'\in B_\tau(p_2)\backslash\{p_2\}$.
\end{lem}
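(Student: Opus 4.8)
\emph{Approach.} The plan is to extract from $p_1\sim'p_2$ a single holomorphic diffeomorphism $h$ between small $\Phi$-disks about $p_1$ and about $p_2$, and then, for an arbitrary $p_1'\in B_\delta(p_1)\setminus\{p_1\}$, to transport this germ to $p_1'$ by analytically continuing $h$ along a short path inside the punctured $\Phi$-disk about $p_1$. The structural point that makes the continuation work is this: by Lemma \ref{isometry} and the remarks after Proposition \ref{cont}, any continuation of $h$ is a local isometry for the $\Phi$-metric, and a local isometry sends a point with a flat metric neighbourhood to another such point; hence a continuation of $h$ can never carry a regular point of $\Phi$ to a zero. Combined with the dichotomy recorded after Proposition \ref{cont}, this shows that $h$ continues analytically along \emph{any} curve lying in $\Sigma\setminus\mathcal{Z}$ that begins inside the domain of $h$: a compact such curve is uniformly bounded away from $\mathcal{Z}$, the same lower bound is inherited by its image under the successive continuations (maximal $\Phi$-disk radii are preserved by local isometries), so the process never degenerates -- exactly as in the proof of Proposition \ref{cont}.

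\emph{Construction.} Choose $\delta>0$ so small that $B_{5\delta}(p_1)$ and $B_{5\delta}(p_2)$ are embedded $\Phi$-disks containing no zero of $\Phi$ except, possibly, their centres, and set $\tau=5\delta$. By $p_1\sim'p_2$ applied to $B_\delta(p_1)$ and $B_\delta(p_2)$ there are points $q_1\in B_\delta(p_1)\setminus\{p_1\}$ and $q_2\in B_\delta(p_2)\setminus\{p_2\}$ with $q_1\sim q_2$; both are regular, so Definition \ref{sim}(1) yields a holomorphic diffeomorphism $h$ with $h(q_1)=q_2$ and $f\circ h=f$. Now fix $p_1'\in B_\delta(p_1)\setminus\{p_1\}$ and join $q_1$ to $p_1'$ by a path $\gamma\subset B_\delta(p_1)\setminus\{p_1\}$ -- radially in toward $p_1$, once around a tiny circle about $p_1$, then radially out to $p_1'$ -- of $\Phi$-length at most $3\delta$. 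By the discussion above, $h$ continues along $\gamma$ to a holomorphic local diffeomorphism $\tilde h$ near $p_1'$ with $f\circ\tilde h=f$ (conformal invariance and Aronszajn's theorem, as in Proposition \ref{cont}). Since $\tilde h$ is a $\Phi$-isometry, $p_2':=\tilde h(p_1')$ satisfies $d(p_2',p_2)\le 3\delta+d(q_2,p_2)<5\delta=\tau$; moreover $p_2'$ is regular (the image of the regular point $p_1'$ under a local $\Phi$-isometry when $p_2\in\mathcal{Z}$, and a point of the zero-free disk $B_{5\delta}(p_2)$ when $p_2\notin\mathcal{Z}$). Restricting $\tilde h$ to a neighbourhood on which it is injective, Definition \ref{sim}(1) gives $p_1'\sim p_2'\in B_\tau(p_2)$.

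\emph{Ruling out $p_2'=p_2$.} If $p_2\in\mathcal{Z}$ -- in particular whenever $p_1\notin\mathcal{Z}$ -- then $p_2'$, being regular, differs from $p_2$, and we are done. The remaining case is $p_2\notin\mathcal{Z}$, forcing $p_1\in\mathcal{Z}$. Here one develops the germ of $h$ over all of $B_\delta(p_1)\setminus\{p_1\}$ -- possible since continuation along curves in $\Sigma\setminus\mathcal{Z}$ never breaks down -- obtaining a local isometry of this punctured cone onto a (possibly multi-sheeted) punctured $\Phi$-disk about some point $p_*$, which extends over $p_1$ to a finite branched covering $\hat h$ of a neighbourhood of $p_1$ onto a neighbourhood of $p_*$ with $\hat h(p_1)=p_*$ and $\hat h^{-1}(p_*)=\{p_1\}$ locally. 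Near each regular point $\hat h$ is a local diffeomorphism preserving $f$, so $p_1\sim p_*$; carrying this out for a sequence of seed germs whose basepoints $q_1$ tend to $p_1$ (which $\sim'$ permits) produces points $p_*$ that tend to $p_2$, and since these $p_*$ are pairwise $\sim$-equivalent by transitivity while, by Lemma \ref{nofriends}, equivalent regular points cannot accumulate, one of them equals $p_2$. Thus, after choosing the seed with $q_1$ sufficiently close to $p_1$, $\hat h(p_1)=p_2$; as $\hat h$ is a branched covering with $\hat h^{-1}(p_2)=\{p_1\}$ locally, possibly shrinking $\delta$ (and correspondingly $\tau$) so that $B_\delta(p_1)$ lies in the domain of $\hat h$ and $\hat h(B_\delta(p_1))\subseteq B_\tau(p_2)$, every $p_1'\in B_\delta(p_1)\setminus\{p_1\}$ satisfies $p_1'\sim\hat h(p_1')\in B_\tau(p_2)\setminus\{p_2\}$, completing the proof.

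\emph{Main obstacle.} I expect the crux to be the case $p_2\notin\mathcal{Z}$: one has to recognize the analytic continuation of $h$ around the branch point $p_1$ as a finite branched covering onto a neighbourhood of $p_2$, using the cone-angle rigidity of the $\Phi$-metric to see that nothing obstructs the development, and Lemma \ref{nofriends} to pin down the base point of that covering as exactly $p_2$. Everything else -- embeddedness and zero-freeness of the small $\Phi$-disks, the length bound for $\gamma$, the open--closed argument for the domain of continuation, and the easy case $p_2\in\mathcal{Z}$ -- is routine bookkeeping, close to what is already carried out for Proposition \ref{cont}.
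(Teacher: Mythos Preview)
Your high-level plan---seed an equivalence $q_1\sim q_2$ from $p_1\sim' p_2$ and analytically continue the associated $h$ along a short path in $B_\delta(p_1)\setminus\{p_1\}$ to an arbitrary $p_1'$---is exactly the paper's. The gap is the claim in your \emph{Approach} paragraph that such continuation always succeeds along curves in $\Sigma\setminus\mathcal{Z}$ because ``maximal $\Phi$-disk radii are preserved by local isometries.'' That claim is false: $r_p$ is not a local invariant of the $\Phi$-metric at $p$; it depends on the location of zeros of $\Phi$ throughout a ball of radius $r_p$ about $p$, which a germ of a local isometry at $p$ cannot see. Concretely, if $\Phi = w^n\,dw^2$ near a zero at $w=0$ with natural coordinate $\zeta=\tfrac{2}{n+2}w^{(n+2)/2}$, then any translation $\zeta\mapsto\zeta+c$ is a local $\Phi$-isometry between flat regions, yet its analytic continuation along a straight $\zeta$-segment reaches $\zeta=-c$, whose image is the cone point $w=0$. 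This is precisely the obstruction allowed by the dichotomy after Proposition~\ref{cont}. Your observation that a local isometry sends a regular point to a regular point is correct where $h$ is already defined, but it says nothing about the limit point to which you are trying to extend.

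Ruling out this obstruction---the image $h(\gamma)$ running into $p_2$---is the whole content of the paper's Lemma~\ref{below}, which requires genuine work (it ultimately rests on Lemma~\ref{lonely}) and which your argument omits entirely. Even in your separately treated case $p_2\notin\mathcal{Z}$, where $B_\tau(p_2)$ really is zero-free so continuation does go through, you must still exclude $\tilde h(p_1')=p_2$; your branched-covering argument for this is sketchy (no reason is given that the monodromy of the continuation around $p_1$ is finite, or that the image organises itself as a punctured disk about a single point $p_*$) and again invokes the unrestricted continuation claim. The paper handles both cases uniformly: either kind of failure produces some $q\in B_\delta(p_1)\setminus\{p_1\}$ with $q\sim' p_2$, and Lemma~\ref{below} shows no such $q$ exists.
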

\begin{proof}
Let $\delta,\epsilon>0$ and $\tau = \epsilon+3\delta$. We choose $\delta,\epsilon$ to be small enough such that $B_\delta(p_1)\cap B_\tau(p_2)=\emptyset$ and that in $B_\delta(p_1)\backslash\{p_1\}$ and $B_\tau(p_2)\backslash\{p_2\}$, 
\begin{enumerate}[label=(\roman*)]
    \item we have no points equivalent to the centers, and
    \item there are no zeros of $\Phi$. 
\end{enumerate}
We take open sets $p_1'\in B_\delta(p_1)$, $p_2'\in B_\epsilon(p_2)$ with $p_1'\sim p_2'$, and let $h$ be the associated holomorphic diffeomorphism. Let $q\in B_\delta(p_1)$, $q\neq p_1$, and let $\gamma$ be a path from a point $p_1'$ to $q$. We choose $\gamma$ to be either the straight line from $p_1'$ to $q$, or a slight perturbation of that line to make sure the path does not touch $p_1$. Regardless, we can arrange so the $\Phi$-length is bounded above by $5\delta/2$.

We analytically continue $h$ along $\gamma$ as much as we can. Since the starting point lies in $B_\epsilon(p_2)$, we see the image under $h$ of any segment lies in $B_\tau(p_2)$. If we can continue $h$ along $\gamma$ to the endpoint, and the endpoint of $h(\gamma)$ is not $p_2$, then we have $q=\gamma(1)\sim h(\gamma(1))$. The only way we could not extend is if some segment of $h(\gamma)$ touches $p_2$. Notice that, regardless, we have a point $q\in B_\delta(p_1)$ that satisfies $q\sim' p_2$ (here we are relabelling $q$ to be the endpoint of a bad segment if that happens). We rule this out with the lemma below.
\begin{lem}\label{below}
In the setting above, we can choose our $\Phi$-disks to be small enough so that no point $q\in B_\delta(p_1)\backslash\{p_1\}$ satisfies $q\sim' p_2$.
\end{lem}
\begin{proof}
We first show that given such a point $q$, we have $q\sim' p_1$. Let $U_1,U_2,U_3$ be open sets containing $p_1,p_2,q$ respectively. Let $\delta_1,\delta_2,\delta_3>0$ and find $p_1'\in B_{\delta_1}(p_1)$, $p_2'\in B_{\delta_2}(p_2)$ with $p_1'\sim p_2'$, as well as $p_2''\in B_{\delta_2}(p_2)$, $q'\in B_{\delta_3}(q)$ with $p_2''\sim q'$. We choose the $\delta_j$'s so that $B_{\delta_3+3\delta_2}(q)$ contains no zeros of the Hopf differential, and $B_{\delta_i}(p_i)$ can only have zeros at $p_i$. We also choose the $\delta_i$'s so that all balls mentioned above are contained in $U_1,U_2,U_3$ and disjoint. Let $h$ be the holomorphic map relating $p_2''$ to $q'$. We analytically continue $h$ along a path $\gamma$ from $p_2''$ to $p_2'$ with length at most $5\delta_2/2$ that is chosen to avoid $p_2$. Then the image path lies in $B_{\delta_3+3\delta_2}(q)$ and so we can continue to the endpoint. The endpoint $h(\gamma(1))$ is equivalent to $p_2'$. If the endpoint is not $q$, then $h(\gamma(1))\sim p_2'\sim p_1'$, and this proves the claim. If the endpoint $h(\gamma(1))$ is $q$ itself, then $q\sim p_2'\sim p_1'$, and we can find $q''$ very close to $q$ that is equivalent to a point very close to $p_1'$ (in particular, contained in $B_{\delta_1}(p_1)$). 

Therefore, we see that if the lemma is false, we can construct a sequence $(q_n)_{n=1}^\infty$ converging to $p_1$ such that $q_n\sim' p_1$ for all $n$. Fix a $q_n$, along with a $\delta'>0$ such that $B_{4\delta'}(q_n)$ contains no zeros and no points equivalent to $q_n$ and $B_{\delta'}(p_1)$ has no zeros other than possibly $p_1$. We find $q_n'\in B_{\delta'}(q_n)$ and $p_1'\in B_{\delta'}(p_1)$ such that $q_n'\sim p_1'$. There is another point $q_N\in B_{\delta'}(p_1)$ such that $q_N\sim' p_1$. Connect $p_1'$ to $q_N$ via a path of length at most $5\delta'/2$ that does not touch $p_1$. Analytically continue the associated map $h$ along this path. The image lies in $B_{4\delta'}(q_n)$, so we can always continue. The endpoint $h(\gamma(1))\in B_{4\delta'}(q_n)$ is equivalent to $q_N$. We claim we can choose $q_N$ with the property that $h(\gamma(1))\neq q_n$. To this end, if $h(\gamma(1))=q_N$, we take the straight line path $\sigma$ from $q_N$ to $q_{N+1}$. According to \cite[Theorem 8.1]{St}, if $p_1$ is a zero of $\Phi$ of order $n$, then geodesics in the $\Phi$ metric are either straight lines or the concatenation of two radial lines enclosing an angle of at least $2\pi/(n+2)$. By pigeonholing, we can pass to a subsequence where every $q_n$ lies in a closed sector of angle $\pi/(n+2)$ around the origin. This guarantees that the straight line path from any $q_j$ to $q_k$ is a geodesic in the $\Phi$-metric and has length at most $\delta'$. As $\Phi(q_n)\neq 0$, the image $h(\sigma)$ is then a straight line contained in $B_{4\delta'}(q_n)$ with initial point $q_n$, so it certainly cannot terminate at $q_n$. We prove the claim by replacing $q_N$ with $q_{N+1}$ and taking the concatenation of our original path with the straight line $\sigma$. We now just want to show $q_N\sim q_n$, and we will have a contradiction. Toward this, it is enough to show $q_N\sim' q_n$, since $\Phi$ does not vanish at these points.

This last step is similar to the beginning of our proof, and so we only sketch the argument. Recall that we have $p_1\sim' q_n$ and $p_1\sim' q_N$. Find smalls balls containing $q_n$, $p_1$, and $q_N$. Then within the ball containing $p_1$ we have two points $p_1'$ and $p_1''$, with $p_1'$ equivalent to a point near $q_n$ and $p_1''$ equivalent to a point near $q_N$. Connect $p_1'$ and $p_1''$ via a small arc that does not touch $p_1$. We can arrange for the arc to stay in a ball around $q_n$ in which it can always be continued. We thus get a point near $q_n$ that is equivalent to a point near $q_N$. We may need to wiggle the path so the point is not $q_n$. As discussed above, we are done.
\end{proof}
Returning to the proof of Lemma \ref{step}, we see that we can always extend our chosen segments, and moreoever each $q\in B_\delta(p_1)\backslash\{p_1\}$ has an equivalent point in $B_\tau(p_2)\backslash\{p_2\}$.
\end{proof}
With Lemma \ref{step} in hand, we are now ready to complete the proof of Proposition \ref{Hausdorff}. Let $\Omega_2'$ be the set of points in $B_\tau(p_2)\backslash \{p_2\}$ that have an equivalent point in $B_\delta(p_1)\backslash\{p_1\}$. Let $\Omega_2=\Omega_2'\cup\{p_2\}$. By repeating the previous argument, we can find a very small ball $B_\alpha(p_2)$ such that every point in $B_\alpha(p_2)\backslash\{p_2\}$ is equivalent to a point in $B_\delta(p_1)\backslash\{p_1\}$. This shows that $p_2$ is an interior point of $\Omega_2$. Away from $p_2$, $\Omega_2$ is open by elementary considerations. It is now simple to conclude $p_1\sim p_2$ by using the open sets $B_\delta(p_1)$ and $\Omega_2$.

\subsection{Ramification at branch points} We now investigate the local behaviour of the map $f$ near zeros of the Hopf differential. This leads us to define a notion of ramification for branch points. Our definition is slightly different from the one given in Section 1.
\begin{lem}\label{rot}
Suppose $p$ is a branch point of $f$, and hence a zero of $\Phi$ of some order $n$. Let $h:\Omega_1\to\Omega_2$ be a holomorphic diffeomorphism with $f\circ h =f$, and suppose $\Omega_1,\Omega_2$ are both contained in a ball $B_\epsilon(p)$, where $\epsilon>0$ is chosen so that there are no other zeros and no other point is equivalent to $p$ in $B_{2\epsilon}(p)$. Then, in the natural coordinates for $\Phi$, $h$ is a rational rotation of angle $2\pi j/(n+2)$
\end{lem}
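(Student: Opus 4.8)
The plan is to read the local form of $h$ directly off the Hopf differential and then rule out the single possible obstruction using Proposition \ref{Hausdorff}. Fix a natural coordinate $z$ for $\Phi$ on $B_\epsilon(p)$, so that $z(p)=0$ and $\Phi=z^n\,dz^2$; since $\Omega_1,\Omega_2\subset B_\epsilon(p)$ this is a coordinate on both of them. By Lemma \ref{isometry} (or, directly, by differentiating $f\circ h=f$) we have $h^*\Phi=\Phi$, which in this coordinate is the identity $h(z)^n h'(z)^2=z^n$ on $\Omega_1$. Taking square roots and integrating on a simply connected piece of $\Omega_1$ disjoint from $p$, this becomes
\[
\zeta(h(z))=\pm\,\zeta(z)+C,\qquad \zeta(w):=\tfrac{2}{n+2}\,w^{(n+2)/2},
\]
for some sign and some constant $C$; here $\zeta$ is the natural parameter of the flat $\Phi$-metric, so the relation just records that $h$ is a flat isometry in the $\zeta$-chart. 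Granting that $C=0$, the lemma follows: then $h(z)^{(n+2)/2}=\pm z^{(n+2)/2}$, so the nonvanishing holomorphic function $h(z)/z$ satisfies $(h(z)/z)^{(n+2)/2}=\pm1$, hence is a constant $\omega$ with $\omega^{n+2}=1$, and $h$ is the rotation $z\mapsto e^{2\pi i j/(n+2)}z$.

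It remains to prove $C=0$. Suppose $C\neq0$, pick $p_1'\in\Omega_1\setminus\{p\}$, and let $\gamma$ be the $\Phi$-geodesic from $p_1'$ to $p$, which in the coordinate $z$ is a radial segment and in particular never winds around $p$. After restricting $h$ to a small $\Phi$-disk about $p_1'$, Proposition \ref{cont} applies repeatedly to continue $h$ analytically along $\gamma|_{[0,1)}$: the arc $\gamma|_{[0,1-\delta]}$ stays a definite distance from $p$, which is the only zero of $\Phi$ in $B_{2\epsilon}(p)$, and because $h$ is a $\Phi$-isometry and $\zeta(h(\gamma(s)))$ traces the segment from $\zeta(h(p_1'))$ to $C$, convexity of $|\cdot|^2$ on segments keeps $h(\gamma)$ inside $B_{2\epsilon}(p)$, hence away from $\mathcal{Z}$ until possibly some $h(\gamma(s))$ equals $p$. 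Thus either the continuation is defined on all of $\gamma|_{[0,1)}$ and $h(\gamma(s))\to z_*:=\zeta^{-1}(C)\neq p$, or it reaches a first parameter $s_0<1$ with $h(\gamma(s_0))=p$, in which case set $z_*:=\gamma(s_0)\neq p$. In either case $z_*\in B_{2\epsilon}(p)\setminus\{p\}$, and along the continuation we have produced points tending to one of $p,z_*$ whose $h$-images tend to the other, with $\gamma(s)\sim h(\gamma(s))$ throughout. Hence $z_*\sim' p$ in the sense of Proposition \ref{Hausdorff}, so $z_*\sim p$, contradicting the choice of $\epsilon$, which forbids any point of $B_{2\epsilon}(p)\setminus\{p\}$ from being equivalent to $p$. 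Therefore $C=0$.

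The delicate step is the last paragraph: one has to check that the explicit continuation along the radial geodesic really is admissible — this is exactly where Proposition \ref{cont} together with the $\Phi$-isometry property are used to keep the image in $B_{2\epsilon}(p)$ — and that the approximating pairs $(\gamma(s),h(\gamma(s)))$ genuinely witness the relation $\sim'$, so that the "holomorphic unique continuation" property of Proposition \ref{Hausdorff} can be invoked. The branch ambiguity of $\zeta$ when $n$ is odd does not intervene, precisely because $\gamma$ and its $h$-image do not wind around $p$, so $\zeta$ and $\zeta\circ h$ have single-valued branches along $\gamma$. One should also note the degenerate possibility $p\in\Omega_1$: then $h(p)$ is a point of $B_\epsilon(p)$ equivalent to $p$, so $h(p)=p$ by the choice of $\epsilon$, and the displayed identity forces $C=0$ immediately; but the argument above in fact covers this case as well, since it only uses a point $p_1'\in\Omega_1\setminus\{p\}$.
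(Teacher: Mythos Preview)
Your proof is correct and follows essentially the same approach as the paper's: both derive the identity $h(z)^n h'(z)^2=z^n$ from $h^*\Phi=\Phi$, integrate to $\zeta(h(z))=\pm\zeta(z)+C$, and then eliminate the constant by analytically continuing $h$ along a radial path toward $p$ and invoking Proposition~\ref{Hausdorff} to force $h(p)=p$ (equivalently $C=0$). The only difference is the order of operations---the paper first continues to establish $h(p)=p$ and then integrates, while you integrate first and then continue to rule out $C\neq 0$---but the substance is identical.
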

\begin{proof}
Select $p_i\in \Omega_i$ with $h(p_1) = h(p_2)$.  Let $\gamma:[0,1]\to B_\epsilon$ be a straight line path starting at $p_1$ that terminates at the point $p$. We analytically continue $h$ in a simply connected neighbourhood of $\gamma$, as far as we can. Either there is an interior point $q$ in the straight line that is mapped via $h$ to $p$, or we can continue along the whole curve and extend to the boundary point $p$. In the first case, Proposition \ref{Hausdorff} guarantees $q\sim h(q)= p$, which by our choice of $\epsilon$ means $q=p$, contradicting the definition of $q$. In the second case, Proposition \ref{Hausdorff} yields $p\sim h(p)$ and we deduce $h(p)=p$.

We now prove $h$ is a rotation. Work in the interior of the extension of $\Omega_1$ in which $h$ has been continued. If we write the Hopf differential in local coordinates as $\Phi =\phi(z)dz^2$, then $$\phi(z) = \phi(h(z))(h'(z))^2.$$  In the natural coordinate for the Hopf differential this becomes $$z^n = (h(z))^n(h'(z))^2.$$ Since we're in a simply connected region that doesn't touch zero we can choose a branch of the square root. $h$ then satisfies $$z^{n/2}=(h(z))^{n/2}h'(z) = \frac{\partial}{\partial z}\frac{(h(z))^{n/2+1}}{n/2+1}.$$ Integrate to get $$z^{n/2+1} = (h(z))^{n/2+1} + c$$ for some complex constant $c$. Since $h(p)=p$, taking $z\to 0$ along $\gamma$ forces $c=0$. This implies $$z^{n+2}=(h(z))^{n+2}$$ and the result is now clear.
\end{proof}
\begin{defn}
 A non-minimal harmonic map $g$ with Hopf differential $\Phi$ is holomorphically ramified of order $r-1$ if $r$ is the largest integer such that there exists a $\Phi$-disk $\Omega$ centered at $p$ and a holomorphic degree $r$ branched cover $\psi: \Omega\to D$ with one branch point at $p$ onto a disk $D$ with $\psi(p)=0$ and such that $\psi(p_1)=\psi(p_2)$ implies $f(p_1)=f(p_2)$.
\end{defn}
A map is called unramified if $r=1$. Clearly, a map can only ramify non-trivially at a branch point.
\begin{lem}\label{ram}
A non-minimal harmonic map $g$ with Hopf differential $\Phi$ is ramified of order $r>1$ at $p$ if and only if for all $\epsilon>0$, there exists $p_1,p_2\in B_\epsilon(p)\backslash \{p\}$ such that $p_1\sim p_2$ and $p_1\neq p_2$, where $p_1\sim p_2$ in the sense that there is a holomorphic map $h$ taking a neighbourhood of $p_1$ to one of $p_2$ that leaves $g$ invariant.
\end{lem}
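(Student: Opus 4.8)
The plan is to encode the local equivalence condition at $p$ as the non‑triviality of a finite cyclic group of rotations of $\Sigma$ that fix $p$ and leave $g$ invariant, and then to realize the branched cover $\psi$ as the quotient by that group. Here $p$ is a branch point of $g$, hence a zero of $\Phi$ of some order $n$ (for the $(\Leftarrow)$ direction, the right‑hand condition likewise forces $p\in\Z$: otherwise, at a regular point of the $\Phi$‑metric, the continuation of $h$ below would be a translation by an arbitrarily small nonzero amount, which by accumulation gives $dg$ a kernel direction near $p$ and so factors $g$ through a projection onto a geodesic, contradicting \cite[Theorem~3]{S} and admissibility). Fix a natural coordinate $z$ near $p$ with $\Phi=z^{n}\,dz^{2}$; then $\Phi$‑disks about $p$ are round in $z$, so each rotation $R_{j}\colon z\mapsto e^{2\pi ij/(n+2)}z$ preserves them.

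$(\Rightarrow)$ Suppose $g$ is ramified of order $r>1$ at $p$, witnessed by a $\Phi$‑disk $\Omega$ and a degree‑$r$ holomorphic branched cover $\psi\colon\Omega\to D$ whose only branch point is $p$. Then $\psi$ is an unbranched covering away from $p$ and, since $\Omega\setminus\{p\}$ is connected, has full local degree $r$ at $p$; hence in a holomorphic coordinate $\zeta$ centred at $p$ it is $\zeta\mapsto\zeta^{r}$ up to a biholomorphism of $D$, so the $r$ preimages of a point of small modulus all cluster at $p$. Given $\epsilon>0$, choose $q_{1}\in B_{\epsilon}(p)\setminus\{p\}$ so close to $p$ that the whole fibre $\psi^{-1}(\psi(q_{1}))$ lies in $B_{\epsilon}(p)$, and let $q_{2}\neq q_{1}$ be another point of that fibre. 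On small neighbourhoods $V_{i}\ni q_{i}$ where $\psi$ restricts to biholomorphisms onto a common neighbourhood of $\psi(q_{1})$, the composite $h=(\psi|_{V_{2}})^{-1}\circ(\psi|_{V_{1}})\colon V_{1}\to V_{2}$ is a holomorphic diffeomorphism with $\psi\circ h=\psi$, hence $g\circ h=g$ by the defining property of $\psi$. Thus $q_{1}\sim q_{2}$, $q_{1}\neq q_{2}$, with both points in $B_{\epsilon}(p)\setminus\{p\}$, which is the asserted condition.

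$(\Leftarrow)$ Choose $\epsilon>0$ small enough that $B_{2\epsilon}(p)$ contains no zero of $\Phi$ other than $p$ and, by Lemma~\ref{lonely} (compare the proof of Proposition~\ref{equiv}), no point equivalent to $p$ other than $p$. By hypothesis, for each $\epsilon'\in(0,\epsilon)$ there are distinct $p_{1},p_{2}\in B_{\epsilon'}(p)\setminus\{p\}$ and a holomorphic diffeomorphism $h$ between neighbourhoods of them with $g\circ h=g$ and $h(p_{1})=p_{2}$. Continue $h$ along the radial segment from $p_{1}$ to $p$: by Lemma~\ref{isometry} its $h$‑image is a segment of the same (small) $\Phi$‑length, so it stays in $B_{2\epsilon}(p)$ and cannot reach the zero $p$ before its endpoint (else that point would be equivalent to $p$, against the choice of $\epsilon$); extending over $p$ via completeness of the $\Phi$‑metric away from zeros and Proposition~\ref{Hausdorff} forces $h(p)=p$, and then the integration argument of Lemma~\ref{rot} shows that the continuation of $h$ equals a rotation $R_{j}$, necessarily with $j\not\equiv0\pmod{n+2}$ since $R_{j}(p_{1})=p_{2}\neq p_{1}$. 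Therefore $G=\{\,j\in\mathbb{Z}/(n+2):g\circ R_{j}=g\text{ on }B_{\epsilon}(p)\,\}$ — a subgroup of $\mathbb{Z}/(n+2)$, closed under composition, with the extension unique by unique continuation of harmonic maps — is non‑trivial; set $r=|G|$, a divisor of $n+2$ with $r>1$. Being the order‑$r$ subgroup, $G$ acts on $B_{\epsilon}(p)=\{|z|<\rho\}$ by $z\mapsto e^{2\pi ik/r}z$, so $\psi(z)=z^{r}\colon B_{\epsilon}(p)\to\{|w|<\rho^{r}\}$ is a degree‑$r$ holomorphic branched cover with unique branch point $p$, and if $\psi(p_{1})=\psi(p_{2})$ then $p_{2}=R_{j}(p_{1})$ for some $R_{j}\in G$, whence $g(p_{2})=g(p_{1})$; so $g$ is ramified to order at least $r-1$ at $p$. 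For the reverse inequality, given any competing degree‑$r'$ holomorphic branched cover $\psi'$ with the same fibre property (shrunk so that its domain lies in $B_{\epsilon}(p)$), a generator of its cyclic deck group is a holomorphic diffeomorphism fixing $p$ and preserving $g$, hence equals some element of $G$ of order $r'$ by Lemma~\ref{rot} and unique continuation; therefore $r'$ divides $r$, so $r'\leq r$. Thus $r$ is exactly the maximal degree, and $g$ is ramified of order $r-1$ with $r>1$.

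The main obstacle is the $(\Leftarrow)$ direction: carrying the analytic continuation of $h$ all the way to the zero $p$ so that the rotation structure of Lemma~\ref{rot} applies, and then establishing the maximality of $r$ by matching the deck transformations of an arbitrary competing branched cover with elements of the rotation group $G$. The remainder is bookkeeping with the $\Phi$‑metric and with cyclic branched covers.
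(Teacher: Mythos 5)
Your proof is correct and follows essentially the same route as the paper: the forward direction builds $h$ from two branches of the branched cover $\psi$ near the two preimages, and the converse uses analytic continuation to $p$ together with the rotation structure of Lemma~\ref{rot} to produce the invariant rotation and hence the cover $\psi(z)=z^{r}$. Your extra steps (arguing $p\in\Z$, and verifying maximality of $r$ via the deck group) are not needed for the statement as used in the paper, since the lemma is invoked only at branch points and the definition of ramification already takes the maximal degree, but they do no harm.
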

\begin{remark}
A similar statement holds for minimal maps. See \cite[Lemma 3.12]{GOR}.
\end{remark}
\begin{proof}
If $g$ is ramified we take a $\Phi$-disk $\Omega$ of $p$ and a map $\psi: \Omega\to D$ as in the definition. Select two points $p_i\neq p$ such that $\psi(p_1)=\psi(p_2)$ as well as neighbourhoods $\Omega_i$ on which $\psi$ is injective and share the same image under $\psi$. Setting $\psi_i=\psi|_{\Omega_i}$, the map $\psi_2^{-1}\circ \psi_1:\Omega_1\to \Omega_2$ is a holomorphic diffeomorphism that leaves $g$ invariant and hence $p_1\sim p_2$. Conversely, pick $\epsilon>0$ such that there are no other zeros of $\Phi$ in $B_{2\epsilon}(p)$ and so we have a coordinate $z$ such that $\Phi = z^ndz^2$. There exists $p_1,p_2\in B_\epsilon(p)$ with $p_1\sim p_2$ but $p_1\neq p_2$. Lemma \ref{rot} shows there are small disks surrounding $p_1,p_2$ that are related by a rotation $h$ of the form $$z\mapsto e^{\frac{2\pi i j}{n+2}}z$$ such that $g=g\circ h$. By the Aronszajn theorem, $g$ is invariant under this rotation in all of $V$. Dividing by the gcd, we see $g$ is invariant under a rotation of the form $$z\mapsto e^{\frac{2\pi i j_1}{r}}z,$$ where $j_1$ and $r$ are coprime. It follows that $g\circ \alpha = g$ in $B_{\epsilon}(p)$, where $\alpha$ is the rotation $z\mapsto e^{2\pi i/r}z$. In these coordinates, we define a holomorphic branched cover $\psi: B_{\epsilon}(p)\to D$ by $\psi(z) = z^{r}$, and note that $\psi(p_1)=\psi(p_2)$ implies $g(p_1)=g(p_2)$.
\end{proof}
\begin{lem}\label{qot}
Let $p$ be a branch point of $f$ of order $m-1$ at which $f$ is ramified of order $r-1$. Then there is a $\Phi$-disk $\Omega$ of $p$ such that $f$ admits a factorization $f|_\Omega=\overline{f}\circ \psi$, where
\begin{enumerate}[label=(\roman*)]
    \item $\psi: \Omega\to D$ is a holomorphic map onto a disk $\{|\zeta|<\delta\}$ such that $\psi|_{\Omega\backslash \{p\}}$ is an $r$-sheeted covering map,
     \item $\overline{f}$ is harmonic with respect to the flat metric on $D$ and the given metric on $M$, and
    \item $\overline{f}:D \to M$ is unramified with a single branch point of order $s-1$ at the origin, where $s=m/r$
\end{enumerate}
\end{lem}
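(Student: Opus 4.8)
The plan is to move everything into a natural coordinate for $\Phi$ near $p$, realize the ramification of $f$ at $p$ as a genuine cyclic symmetry, and obtain $\overline{f}$ as the corresponding quotient, checking its three properties in turn.

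First I would choose $\epsilon>0$ small enough that $B_{2\epsilon}(p)$ contains no zero of $\Phi$ other than $p$ (hence no branch point of $f$ other than $p$) and no point equivalent to $p$, and work in a natural coordinate $z$ on $\Omega:=B_\epsilon(p)$, so that $\Phi=z^n\,dz^2$ there. Since $f$ is ramified of order $r-1$ at $p$, the proof of Lemma \ref{ram} gives (after shrinking $\epsilon$ if necessary) that $f$ is invariant under the rotation $\alpha\colon z\mapsto e^{2\pi i/r}z$ on $\Omega$, with $r$ the largest integer for which this holds. I would then set $\psi\colon\Omega\to D$, $\psi(z)=z^r$, onto the disk $D=\{|\zeta|<\delta\}$ of the appropriate radius: this is holomorphic, sends $p$ to $0$, and restricts to an $r$-sheeted covering away from $p$, which is (i). Because $\psi$ realizes the quotient of $\Omega$ by $\langle\alpha\rangle$ and $f$ is $\langle\alpha\rangle$-invariant, $f$ descends to a continuous map $\overline{f}\colon D\to M$ with $f|_\Omega=\overline{f}\circ\psi$; concretely $\overline{f}(\zeta):=f(z)$ for any $z$ with $z^r=\zeta$, and well-definedness is precisely the symmetry $f\circ\alpha=f$.

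For (ii), away from the origin $\psi$ is a local biholomorphism, so $\overline{f}$ is locally $f\circ(\psi|_U)^{-1}$ and hence harmonic on $D\setminus\{0\}$ for the flat metric, by the conformal invariance of the harmonic map equation recalled in Section 2. To pass harmonicity across $0$ I would note that $\overline{f}$ is continuous and of finite energy near $0$: the energy of a map from a surface is conformally invariant, and $\psi$ is an $r$-fold Riemannian covering for the flat metric pulled back to $\Omega$, so the energy of $\overline{f}$ on $D\setminus B_\rho(0)$ equals $r^{-1}$ times that of $f$ on $\Omega\setminus\psi^{-1}(B_\rho(0))$, uniformly bounded by $r^{-1}$ times the (finite) energy of $f$ on $\Omega$. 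Then $\overline{f}$ is a finite-energy map, continuous on $D$ and harmonic off the isolated point $0$, hence weakly harmonic across $0$ and therefore smooth there by a removable-singularity theorem for harmonic maps. For the branch orders in (iii) I would apply Hartman-Wintner both to $f$ at $p$ and to $\overline{f}$ at $0$: if $\overline{f}$ has a branch point of order $\ell$ at $0$, i.e. $\overline{f}_\zeta$ vanishes there to order $\ell$ (with $\ell=0$ the regular case), then from $f_z=\partial_z(\overline{f}\circ\psi)=rz^{r-1}\,\overline{f}_\zeta(z^r)$ the leading term shows $f_z$ vanishes to order exactly $r-1+r\ell$; comparing with the branch order $m-1$ of $f$ forces $r-1+r\ell=m-1$, whence $r\mid m$ and $\ell=m/r-1=s-1$. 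Since $f$ is regular on $\Omega\setminus\{p\}$ and $\psi$ is a local biholomorphism there, $\overline{f}$ is regular on $D\setminus\{0\}$, so $0$ is its only branch point.

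To finish (iii) I would show $\overline{f}$ is unramified. Its Hopf differential $\Phi'$ satisfies $\psi^*\Phi'=\Phi\neq0$, so $\overline{f}$ is non-minimal and the notion of ramification applies. If $\overline{f}$ were ramified of order $r'-1$ with $r'\geq2$, the definition would produce a holomorphic degree-$r'$ branched cover of a subdisk about $0$, branched only at $0$, through which $\overline{f}$ factors; composing it with $\psi$ would exhibit $f$ as factoring through a holomorphic branched cover of degree $rr'>r$ of a $\Phi$-disk about $p$, branched only at $p$, contradicting that $f$ is ramified of order exactly $r-1$. Hence $r'=1$ and (iii) follows. I expect the only step needing genuine analytic input to be the extension of harmonicity of $\overline{f}$ across the branch point, which leans on a removable-singularity theorem; the subtlest piece of bookkeeping is the divisibility $r\mid m$ extracted from the two Hartman-Wintner expansions, since that is exactly what makes the identity $s=m/r$ meaningful.
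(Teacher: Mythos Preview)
Your argument is correct and follows the paper's route almost exactly: obtain $\psi(z)=z^r$ from the rotational symmetry (the paper takes $\psi$ directly from the definition of ramification), descend $f$ to $\overline{f}$, verify harmonicity off the origin by conformal invariance, read off the branch order of $\overline{f}$ from leading terms, and exclude further ramification by composing branched covers to contradict maximality of $r$. The one substantive difference is how harmonicity is extended across $0$: the paper obtains $C^1$ bounds on $\overline{f}_\zeta$ directly in coordinates and bootstraps via Schauder estimates to conclude the tension field is continuous and hence identically zero, whereas you invoke a finite-energy removable-singularity theorem; both work, though the paper's route is more self-contained and avoids the mild regularity caveats that come with citing such a theorem under only $C^2$ metrics. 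One small omission: for the composition step you should note, as the paper does, that $\psi$ carries $\Phi$-disks to $\Theta$-disks (an easy computation from $\psi^*\Theta=\Phi$), so that the domain of $\psi'\circ\psi$ really is a $\Phi$-disk as the definition of ramification requires.
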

\begin{proof}
Define $\overline{f}$ by $\overline{f}(\psi(z)) =f(z)$. (i) is given and we begin with (ii). Harmonicity is a local matter, and at any point away from zero we can choose a neighbourhood surrounding that point where $\psi^{-1}$ exists and we have the factorization $\overline{f}=f\circ \psi^{-1}$ in that neighbourhood. Since $\psi^{-1}$ is conformal, $\overline{f}$ is harmonic off $0$. Near $0$, we compute $\overline{f}_\zeta$ in coordinates to realise $C^1$ bounds. Via Schauder theory we promote to $C^2$ (or even $C^\infty$) bounds. This implies that the tension field is continuous and therefore vanishes everywhere. As for (iii), we can write each component $f^k$ in certain coordinates as $$f^k = p^k + r^k,$$ where $p^k$ is a spherical harmonic and $r^k$ decays faster than $p^k$. In this form, it is easy to check the branching orders of $f$ and $\overline{f}$.
 
It remains to show that $\overline{f}$ is unramified. Toward this, let $\Theta$ be the Hopf differential of $\overline{f}$ and note the image of a $\Phi$-disk under $\psi$ is a $\Theta$-disk. Indeed, if $\Phi=\phi(z)dz^2$, $\Theta(\zeta) = \theta(\zeta)d\zeta^2$ in local coordinates, then $$\phi(z) = \theta(z^r)\Big ( \frac{\partial z^r}{\partial z}\Big )^2 = \theta(z^r) z^{2r-2}r^{2}.$$ We rearrange to see $$\theta(\zeta) = \theta(z^r) =  z^{n-2r+2}r^{-2}.$$ and the fact that the image is a $\Theta$-disk is derived from direct computation. If $\overline{f}$ is ramified, we can build another holomorphic branched covering map $\psi'$ as in Lemma \ref{ram}. Since both $\psi$ and $\psi'$ have finite fibers, the composition $\psi'\circ \psi$ yields a branched cover of degree greater than $r$, which is impossible. This finishes the proof.
\end{proof}
\begin{remark}
Our computations show that the ramification order is constrained by $r|m$, $r|(n+2)$, and $2r\leq n+2$. The last condition is superfluous, since we always have $2m\leq n+2$. 
\end{remark}
\begin{lem}\label{branch}
For $i=1,2$, let $p_i$ be branch points of $f$ of order $m_i-1$ (we are allowing $m_i=1$), ramified of order $r_i-1$. Then $p_1\sim p_2$ if and only if
\begin{enumerate}[label=(\roman*)]
\item $f(p_1)=f(p_2)$,
    \item $m_1/r_1=m_2/r_2$, and
    \item if $s$ is the common value $m_i/r_i$, there exist maps $\psi_i:U_i\to D$, $\overline{f}_i:D\to M$, $\psi_i(p_i)=0$, such that $\psi_i|_{U_i\backslash\{p\}}$ is an $r_i$-sheeted holomorphic covering map, $f$ factors as $f|_{U_i}=\overline{f}\circ \psi_i$, and $\overline{f}$ is a harmonic map for the flat metric on the disk with a branch point of order $s-1$.
\end{enumerate}
\end{lem}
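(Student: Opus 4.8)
The plan is to prove the two implications separately, with ``if'' being short and ``only if'' carrying the weight.

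For the ``if'' direction, assume (iii) (which in fact subsumes (i), since $f(p_i)=\overline f(\psi_i(p_i))=\overline f(0)$). As $p_1,p_2\in\Z$, I verify Definition \ref{sim}(2). Given neighbourhoods $\Omega_i\ni p_i$, first shrink them to $\Phi$-disks inside $U_i$; the computation in the proof of Lemma \ref{qot} shows $\psi_i$ carries $\Phi$-disks to $\Theta$-disks, so after a further shrink I may assume $\psi_1(\Omega_1)=\psi_2(\Omega_2)$ is a round disk $\{|\zeta|<\delta\}$ in the natural coordinate and that $\psi_i|_{\Omega_i\setminus\{p_i\}}$ is an $r_i$-sheeted covering of the punctured disk. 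For $q_1\in\Omega_1\setminus\{p_1\}$, pick $q_2\in\Omega_2\setminus\{p_2\}$ with $\psi_2(q_2)=\psi_1(q_1)$; composing the local holomorphic inverses of $\psi_1$ and $\psi_2$ produces a holomorphic diffeomorphism $g$ from a neighbourhood of $q_1$ onto one of $q_2$ with $g(q_1)=q_2$ and $f\circ g=\overline f\circ\psi_2\circ(\psi_2^{-1}\circ\psi_1)=\overline f\circ\psi_1=f$, so $q_1\sim q_2$. The reverse matching is identical, so $p_1\sim p_2$.

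For the ``only if'' direction, suppose $p_1\sim p_2$; then (i) is immediate from the definition of $\sim$. I apply Lemma \ref{qot} at each $p_i$ to obtain $\Phi$-disks $U_i$, $r_i$-sheeted branched covers $\psi_i\colon U_i\to D_i$, and harmonic maps $\overline f_i\colon D_i\to M$ that are unramified with a single branch point of order $s_i-1$ at the origin ($s_i=m_i/r_i$) and satisfy $f|_{U_i}=\overline f_i\circ\psi_i$. The existence part of (iii) is then free, and what is left is to show $\overline f_1$ and $\overline f_2$ agree up to a conformal identification of $D_1$ with $D_2$ fixing the origin; (ii) then follows because the branch order is a conformal (Hartman--Wintner) invariant. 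To do this I pass to the harmonic map $\overline f_1\sqcup\overline f_2$ on $D_1\sqcup D_2$, which is again admissible (its image meets each $f(U_i)$, hence is not contained in a geodesic by \cite[Theorem 3]{S}) with non-vanishing Hopf differential, and to which all of Section 3 applies because those arguments are local and the two disks are separate components. Pushing the equivalences supplied by $p_1\sim p_2$ forward through the $\psi_i$ (which are local biholomorphisms off the $p_i$) shows the two origins satisfy $0\sim' 0$ for $\overline f_1\sqcup\overline f_2$, whence $0\sim 0$ by Proposition \ref{Hausdorff}. Finally I recover an honest holomorphic identification as in the proof of Lemma \ref{rot}: a local biholomorphism $G_0$ with $\overline f_2\circ G_0=\overline f_1$ based near the origin of $D_1$ is an isometry from the $\Theta_1$-metric to the $\Theta_2$-metric; I continue it along a $\Theta_1$-geodesic toward the origin of $D_1$, use the unramifiedness of $\overline f_1,\overline f_2$ together with Proposition \ref{Hausdorff} to forbid the continuation from striking the origin of $D_2$ before its endpoint, and extend across the puncture by Riemann's removable-singularity theorem. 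The resulting biholomorphism $G$ satisfies $\overline f_2\circ G=\overline f_1$ and $G(0)=0$; setting $\overline f:=\overline f_2$ and replacing $\psi_1$ by $G\circ\psi_1$ gives (iii), and matching branch orders through $G$ gives (ii).

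The hard part is the ``only if'' direction, and it is concentrated in two places. First, one must check that Proposition \ref{Hausdorff} and the lemmas behind it genuinely apply to the disconnected source $D_1\sqcup D_2$ — this is true but should be stated carefully, the point being that each disk is a separate component so ``$f$ is a local embedding near regular points'' still holds. Second, and more delicate, is pinning down $G(0)=0$, which is the Lemma \ref{rot}-style dichotomy: if an interior point of the continuation mapped to the origin of $D_2$, that point would be equivalent to the origin of $D_2$ (by Proposition \ref{cont} and Proposition \ref{Hausdorff}) and hence, by transitivity with $0\sim 0$, equivalent to the origin of $D_1$ — impossible for $\overline f_1$ unramified; so the continuation runs to its endpoint, which is again equivalent to the origin of $D_2$ and must equal it for the same reason. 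Once this is in place the remaining steps — comparing Hartman--Wintner orders and assembling the common factorization — are routine.
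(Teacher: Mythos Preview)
Your ``if'' direction coincides with the paper's. For ``only if,'' you take a genuinely different route. The paper stays upstairs on $\Sigma$ and first proves an equal-$\Phi$-distance lemma --- if $p_1'\sim p_2'$ with $p_i'$ in the chosen neighbourhoods of $p_i$, then $d(p_1,p_1')=d(p_2,p_2')$ --- via analytic continuation along radial segments and Proposition~\ref{Hausdorff}. This lets the $U_i$ be taken as $\Phi$-disks realizing condition~(2) of Definition~\ref{sim}, after which $G$ is built pointwise on all of $D_1\setminus\{0\}$ by the lift--match--project rule $w_1\mapsto\psi_2(w_2')$, where $w_2'\in U_2$ is equivalent to a $\psi_1$-lift of $w_1$ and is unique by unramifiedness of $\overline f_2$. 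You instead push the problem down to $D_1\sqcup D_2$, reapply Proposition~\ref{Hausdorff} to the auxiliary harmonic map $\overline f_1\sqcup\overline f_2$ to obtain $0_{D_1}\sim 0_{D_2}$ there, and only then try to produce $G$. The reuse of the Section~3 machinery is legitimate --- those arguments are local and the admissibility check you give is correct --- and it buys a conceptually cleaner reduction, at the cost of having to verify the machinery transfers.

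There is, however, a real gap in your final step. Continuing $G_0$ along a single $\Theta_1$-geodesic toward $0_{D_1}$ yields $G$ only on a simply connected region abutting the puncture, not on a full punctured neighbourhood of $0_{D_1}$; Riemann's removable-singularity theorem does not apply in that situation, and you have not addressed the monodromy obstruction to patching continuations around $0_{D_1}$. The fix is that you have already done the hard work: once $0_{D_1}\sim 0_{D_2}$ is established, Definition~\ref{sim}(2) furnishes sub-neighbourhoods $V_i'$ in which every $w_1\in V_1'\setminus\{0\}$ has an equivalent $w_2\in V_2'\setminus\{0\}$, and unramifiedness of $\overline f_2$ (via Lemma~\ref{ram}) makes this $w_2$ unique. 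Setting $G(w_1):=w_2$ gives a well-defined holomorphic bijection on the full punctured disk, bounded and hence extendable across $0$ --- exactly the paper's pointwise construction of $G$, carried out downstairs rather than upstairs. Replace your continuation step by this and the argument closes; the Lemma~\ref{rot}-style dichotomy then becomes unnecessary.
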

\begin{proof}
If $m=0$ this is trivial, so we assume $m>0$. Suppose the conditions hold. Given any two open sets $\Omega_i$ containing $p_i$, we can radially shrink our $\Phi$-disks to have $U_i\subset \Omega_i$ (the argument from Lemma \ref{rot} shows any two points with $\psi_i(q_1)=\psi_i(q_2)$ have the same $\Phi$-distance to $p_i$). For $p_1'\in U_1\backslash \{p_1\}$ let $\psi_1'$ be the restriction to a neighbourhood $U_1'$ of $p_1'$ on which $\psi_1$ is injective. Let $\psi_2'$ be the restriction onto some neighbourhood $V_2'$ such that $\psi_2$ maps $U_2'$ injectively onto $\psi_1(U_1')$. Set $p_2'=\psi_2'^{-1}\circ \psi_1'(p_1')$ and $h =\psi_2'^{-1}\psi_1$. $h$ is holomorphic and leaves $f$ invariant. The result follows.

Conversely, assume $p_1\sim p_2$. (i) was already discussed. We first want to show that we can choose $\Phi$-disks $U_i$ that satisfy condition (2) in the definition of $\sim$. We take $\psi_i:U_i\to D_i$ and $f_i: D_i\to M$ as in Lemma \ref{qot}. If $p_1'\in U_1$ is equivalent to $p_2'\in U_2$, then combining our reasoning from Proposition \ref{equiv} with Proposition \ref{Hausdorff} shows $d(p_1,p_1')=d(p_2,p_2')$. We've run this type of argument a few times at this point, but we feel a duty to elaborate. Pick subdisks $U_i'\subset U_i$ that satisfy condition (2) and balls $B_{\delta}(p_1)$, $B_\epsilon(p_2)$ contained in the subdisks, such that in $B_{2\delta}(p_1)$ and $B_{\epsilon+2\delta}(p_2)$ there are no points equivalent to $p_1,p_2$ respectively and no other possible zeros of $\Phi$. Find $p_1'\in B_\delta(p_1)\backslash\{p_1\}$ and $p_2'\in B_\epsilon(p_2)\backslash\{p_2\}$ with $p_1'\sim p_2'$. Take the straight line path $\gamma$ from $p_1'$ to $p_1$ and analytically continue $h$ along $\gamma$ as much as we can. The image of any segment of this path under $h$ is also a straight line contained in $B_{\epsilon+2\delta}(p_2)$. We now have two possibilities:
\begin{enumerate}[label=(\roman*)]
    \item the path $h(\gamma)$ runs into $p_2$ before we have finished extending, or
    \item we can extend $h$ to the boundary point $\gamma(1)=p_1$
\end{enumerate}
In the first scenario, we obtain $d(p_2,p_2')\leq d(p_1,p_1')$. In the latter, Proposition \ref{Hausdorff} ensures $h(\gamma(1))\sim p_1\sim p_2$, so that $h(\gamma(1))=p_2$. Regardless of the situation, we have $$d(p_2',p_2)\leq d(p_1',p_1).$$ To reverse the argument for the other inequality, we go via a straight line from $p_2'$ to $p_2$. For any segment $\gamma'$ along which we can continue, the length of $h(\gamma')$ is now bounded above by $d(p_1',p_1)< \delta$. Thus, we can continue along the whole curve so long as we don't hit $p_1$. In the same way as above we get the opposite inequality. This is the desired result.

Using the definition of $\sim$, we can now assume the $\Phi$-disks $U_i$ are such that $f(U_1)=f(U_2)$ and that for all $p_1'\in V_1$, $p_1' \neq p_1$, there is $p_2'\in V_2$, $p_2'\neq p_2$, such that $p_1'\sim p_2'$, and vice versa. We construct a holomorphic diffeomorphism $G:D_1\to D_2$ such that $$\overline{f}_2\circ G = \overline{f}_1.$$ Let $w_1\in D_1\backslash\{0\}$. We take a small neighbourhood of $w_1$ and a lift to an open set via $\psi_1$ such that the restriction of $\psi_1$ is injective. Let $w_1'$ be the given preimage under $\psi_1$. There is then a point $w_2'\in V_2$ related by a holomorphic map such that $f$ agrees in neighbourhoods surrounding $w_1'$ and $w_2'$. Set $w_2=G(w_1)=\psi_2(w_2')$. We claim there can be no other point with this property. If there was such a $w'$, then we would have $w\sim w'$ with respect to the corresponding equivalence relation for $\overline{f}_2$. However, we know the map $\overline{f}_2$ is unramified, and by Lemma \ref{ram} we can choose our disks small enough that there are no two distinct points in $D_2$ with this property. The association $w_1\mapsto w_1'$ defines our map $G$. If we set $G(0)=0$, then we see $G$ is a diffeomorphism from $D_1\backslash\{0\}\to D_2\backslash\{0\}$, because we can invert the construction. The map $G$ is holomorphic off $\{0\}$. Since it is bounded near $0$, it extends to a holomorphic diffeomorphism on all of $D_1$.

From Lemma \ref{qot} the branching order of $\overline{f}_i$ is $m_i/r_i -1$, and since $G$ is a diffeomorphism, it is clear that these branching orders agree. Defining $D=D_1$ and $\overline{f}$ to be the common map $\overline{f}^2\circ G=\overline{f}_1$, $\psi_1=\overline{\psi}_1$, $\psi_2=G^{-1}\circ \overline{\psi}_2$, (iii) can be verified easily.
\end{proof}

\subsection{Constructing the Riemann surface} Preparations aside, we build the covering space. Our work here is drawn from Propositions 3.19 and 3.24 in \cite{GOR}. Let $\Sigma_0$ denote the space of equivalence classes of $\Sigma$ with respect to $\sim$, equipped with the quotient topology. We denote by $\pi:\Sigma\to \Sigma_0$ the projection map.
\begin{prop}\label{surface}
$\Sigma_0$ is an orientable surface.
\end{prop}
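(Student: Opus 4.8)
The plan is to build an atlas of charts on $\Sigma_0$ indexed by the images of small $\Phi$-disks in $\Sigma$, show that the transition maps are homeomorphisms onto open subsets of a fixed model disk, and check the atlas is compatible with an orientation. Concretely, for each $p\in\Sigma$ I would fix a small $\Phi$-disk $\Omega_p$ around $p$ chosen small enough that: (a) $\Omega_p$ contains no zero of $\Phi$ other than possibly $p$; (b) no point of $\Omega_p\setminus\{p\}$ is equivalent to $p$; and (c) in the ramified case, $\Omega_p$ admits the factorization $f|_{\Omega_p}=\overline f\circ\psi_p$ of Lemma~\ref{qot}, with $\psi_p:\Omega_p\to D_p$ the $r_p$-sheeted branched cover onto a flat disk. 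Such $\Omega_p$ exist by Lemma~\ref{lonely}, Lemma~\ref{nofriends}, and Lemma~\ref{qot}. I claim $\psi_p$ descends through $\pi$: by Lemma~\ref{rot} (equivalently Lemma~\ref{ram}) two points of $\Omega_p$ are identified by $\sim$ exactly when they have the same image under $\psi_p$, so $\psi_p$ factors as $\psi_p=\overline\psi_p\circ\pi$ with $\overline\psi_p:\pi(\Omega_p)\to D_p$ a continuous bijection. Giving $\pi(\Omega_p)$ the quotient topology, $\overline\psi_p$ is a homeomorphism onto the disk $D_p$; these $(\pi(\Omega_p),\overline\psi_p)$ are the proposed charts, and their domains cover $\Sigma_0$ since the $\Omega_p$ cover $\Sigma$.

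The transition maps are handled by Lemma~\ref{branch}. Suppose $\pi(\Omega_{p})\cap\pi(\Omega_{q})\neq\emptyset$, say $[p']=[q']$ with $p'\in\Omega_p$, $q'\in\Omega_q$. On the overlap I want $\overline\psi_q\circ\overline\psi_p^{-1}$ to be a homeomorphism of open subsets of $D_p$, $D_q$. Away from the distinguished points this follows from the construction of $h=\psi_q'^{-1}\circ\psi_p'$ in the proof of Lemma~\ref{branch}: locally the transition is exactly such an $h$, which is a holomorphic (in particular orientation-preserving) diffeomorphism between open sets. If the center $p$ itself is identified with the center $q$, Lemma~\ref{branch} gives $m_p/r_p=m_q/r_q$ and compatible factorizations through a common unramified $\overline f:D\to M$, so the transition near $0$ is the diffeomorphism $G$ (or $G^{-1}$) produced there, which is holomorphic and fixes $0$. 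In all cases transition maps are holomorphic diffeomorphisms between open subsets of $\mathbb{C}$, hence in particular orientation-preserving homeomorphisms. This shows $\Sigma_0$ is locally Euclidean with orientation-compatible charts.

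It remains to verify that $\Sigma_0$ is a genuine (Hausdorff, second countable) surface. Second countability is inherited from $\Sigma$ via the open quotient map $\pi$ (note $\pi$ is open: if $V\subset\Sigma$ is open then $\pi^{-1}(\pi(V))$ is the union of $V$ with the $\sim$-saturation, which is open because $\sim$ is realized by local holomorphic diffeomorphisms and, near zeros, by the chart description above — this is the standard argument from \cite{GOR}). Hausdorffness is precisely the content of Proposition~\ref{Hausdorff}: if $[p_1]\neq[p_2]$ cannot be separated in $\Sigma_0$, then every pair of saturated neighborhoods of $p_1,p_2$ meet, which unwinds to $p_1\sim' p_2$, hence $p_1\sim p_2$ by Proposition~\ref{Hausdorff}, a contradiction; concretely, the charts $\pi(\Omega_{p_1})$ and $\pi(\Omega_{p_2})$ (with $\Omega_{p_i}$ small as above) are disjoint once one checks, using Lemma~\ref{step} and Lemma~\ref{below}, that no point of $\Omega_{p_1}$ is equivalent to a point of $\Omega_{p_2}$ unless $p_1\sim p_2$.

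The step I expect to be the main obstacle is showing the transition maps are well-defined and continuous \emph{at} the branched centers — i.e. that when $\pi$ identifies $p$ with an interior point of $\Omega_q$, or with the center $q$, the locally-defined holomorphic diffeomorphisms patch to a single homeomorphism $\overline\psi_q\circ\overline\psi_p^{-1}$ near $0\in D_p$. The subtlety is that the maps $h$ coming from the definition of $\sim$ are a priori only local and need not assemble coherently; one must invoke the analytic-continuation machinery (Proposition~\ref{cont}) together with the rigidity of Lemma~\ref{rot} to see that all the local pieces agree with a single branch-covering relation, and then use boundedness near $0$ plus the removable singularity theorem (exactly as in the extension of $G$ in Lemma~\ref{branch}) to conclude the descended transition map is holomorphic across the center. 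Once this is in place, orientability is immediate since all transitions are holomorphic, and the proposition follows.
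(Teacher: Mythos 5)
Your proposal is correct and follows essentially the same approach as the paper: for each $p$ take a small $\Phi$-disk $U$ with the factorization $f|_U = \overline f\circ\psi$ from Lemma~\ref{qot}, use $\psi$ to build charts $\overline\psi$ on $\Sigma_0$ with $\overline\psi\circ\pi=\psi$, verify that $\overline\psi$ is well defined and injective using the unramifiedness of $\overline f$, and invoke Proposition~\ref{Hausdorff} for Hausdorffness. The one organizational difference is that you fold in the verification that transition maps are holomorphic; the paper defers that to Proposition~\ref{hol} and for Proposition~\ref{surface} settles for the brief remark that $\pi$ respects orientation, which is justified by the same chart description you give. One small precision point: when you assert that two points of $\Omega_p$ are $\sim$-equivalent exactly when they share a $\psi_p$-image, the cleanest justification (and the one the paper uses) is to push the local diffeomorphism down to $\overline f$ and appeal to the unramifiedness of $\overline f$ together with Lemma~\ref{ram}, rather than arguing directly from Lemma~\ref{rot}; citing Lemma~\ref{rot} alone leaves a small step (that the rotation angle is a multiple of $2\pi/r$) implicit, though it is recoverable from the maximality of $r$.
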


\begin{proof}
For each $p\in M$ let $U$ be a neighbourhood of $p$ with no other point equivalent to $p$ and as in Lemma \ref{qot}, so that we have a map $\psi: U\to D$, a harmonic map $f:D\to M$, and a factorization $f=\overline{f}\circ \psi$. Let $\overline{U}=\{[q]:q\in U\}$. To prove such a set is open, we show any $\pi^{-1}(\overline{U})\subset \Sigma$ is open. If $p_1\in \pi^{-1}(\overline{U})$, then there is $p_2\in V$ such that $p_1\sim p_2$. Then we can find neighbourhoods $\Omega_i$ containing $p_i$ with $\Omega_2\subset U$ and such that for each $p_1'\in\Omega_1$ there exists $p_2'\in\Omega_2$ with $p_1'\sim p_2'$. This implies the $\overline{U}$ define an open cover of $\Sigma_0$.

On each $\overline{U}$ we have a map $\overline{\psi}:\overline{U}\to D$ given by $\overline{\psi}([q])=\psi(q)$. We will see that these maps define charts. If $q_1,q_2\in U$ are such that $q_1\sim q_2$, then $\psi(q_1)$, $\psi(q_2)$ are equivalent with respect to $\overline{f}$ and hence we can choose $U$ so that $\psi(q_1)=\psi(q_2)$, since $\overline{f}$ is unramified. This proves $\overline{\psi}$ is well-defined.

For injectivity, suppose $[p_1],[p_2]\in \overline{U}$ are such that $\overline{\psi}([p_1])=\overline{\psi}([p_2])$. Choosing representatives $p_1,p_2$, either $p_1=p_2=p$ or neither of them is equal to $p$. In the second case, since $\psi$ is a holomorphic covering map on $U\backslash \{p\}$ we can use it to build a holomorphic diffeomorphism from a neighbourhood of $p_1$ to a neighbourhood of $p_2$. Since $f=\overline{f}\circ \psi$ on $U$, this map leaves $f$ invariant.

As for continuity and openness, the argument is the same as the one found in  \cite[page 779]{GOR}. The Hausdorff condition is immediate from Proposition \ref{Hausdorff}. $\Sigma_0$ is orientable because $\pi$ respects the orientation of $\Sigma$.
\end{proof}
There exists a continuous map $f_0:\Sigma_0\to M$ such that $f=f_0\circ \pi$, defined by $f_0([p])=f(p)$.
\begin{prop}\label{hol}
There exists a complex structure on $\Sigma_0$ so that $\pi:\Sigma\to \Sigma_0$ is holomorphic and the map $f_0$ is harmonic with respect to the conformal metric $\mu_0$ obtained via uniformization.
\end{prop}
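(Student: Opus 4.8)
The plan is to transport the complex structure from $\Sigma$ down to $\Sigma_0$ using the charts $\overline{\psi}\colon \overline{U}\to D$ constructed in Proposition \ref{surface}. Since each $\psi\colon U\to D$ is holomorphic and the covering maps used to pass between preimages are holomorphic diffeomorphisms leaving $f$ invariant, I would first verify that the transition maps $\overline{\psi}_2\circ\overline{\psi}_1^{-1}$ between overlapping charts are holomorphic. Away from the images of branch points this is immediate: on such an overlap the map is locally of the form $\psi_2\circ h\circ \psi_1^{-1}$ for a holomorphic diffeomorphism $h$ coming from the relation $\sim$, hence holomorphic. At an image of a branch point the transition map is holomorphic on a punctured neighbourhood and continuous (indeed bounded) at the puncture, so it extends holomorphically by the removable singularity theorem, exactly as in the extension of $G$ in the proof of Lemma \ref{branch}. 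This gives $\Sigma_0$ the structure of a Riemann surface, and by construction $\pi$ is holomorphic: in the chart $\overline{\psi}$ around $[p]$ the map $\pi$ reads as $\psi\colon U\to D$, which is holomorphic.

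Next I would produce the metric $\mu_0$. By uniformization, fix on $\Sigma_0$ any conformal Riemannian metric (e.g. the one pulled back from the uniformizing cover, or locally any $C^2$ conformal metric patched with a partition of unity — conformal class is all that matters for the harmonic map equation). Then I must check that $f_0\colon(\Sigma_0,\mu_0)\to(M,\nu)$ is harmonic. Since harmonicity is conformally invariant in the domain and is a local condition, it suffices to check it in each chart $\overline{U}$. There, $f_0$ is represented by the map $\overline{f}\colon D\to M$ of Lemma \ref{qot}, which was already shown to be harmonic for the flat metric on $D$, hence harmonic for any conformal metric on $D$, hence $f_0$ is harmonic near $[p]$. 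Away from images of branch points one can instead argue directly: locally $f_0 = f\circ\psi^{-1}$ with $\psi^{-1}$ conformal, so $f_0$ inherits harmonicity from $f$. Combining the two descriptions over the open cover $\{\overline{U}\}$ yields that $f_0$ is harmonic everywhere on $\Sigma_0$.

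Finally, I would record the remaining bookkeeping: $f = f_0\circ\pi$ holds by definition of $f_0$, and the equality $\pi(\Omega_1)=\pi(\Omega_2)$ demanded in Theorem \ref{main} follows because the hypothesized $h\colon\Omega_1\to\Omega_2$ witnesses $p\sim h(p)$ for every $p\in\Omega_1$, so $\pi$ identifies $\Omega_1$ with $\Omega_2$. The step I expect to require the most care is the holomorphic extension of the transition maps across images of branch points: one must be sure that the chart $\overline{\psi}$ is genuinely a homeomorphism onto $D$ near the image of $p$ (so that the punctured-disk argument applies) and that the two sheeted structures $\psi_1,\psi_2$ match up after passing through the unramified reduced map $\overline{f}$ — this is precisely where Lemma \ref{branch} and the unramifiedness from Lemma \ref{qot} are used, and it is the only place where the earlier singularity analysis is genuinely needed rather than formal nonsense about quotients.
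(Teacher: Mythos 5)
Your proposal is correct and follows essentially the same route as the paper: transport the complex structure via the charts $\overline{\psi}$, check that transition maps are holomorphic away from images of branch points and then extend over them by the removable-singularity theorem, observe that $\pi$ reads as $\psi$ in these charts, and deduce harmonicity of $f_0$ from the factorization $f_0 = \overline{f}\circ\overline{\psi}$ and the harmonicity of $\overline{f}$ established in Lemma \ref{qot}. Your explicit writing of the transition map as $\psi_2\circ h\circ\psi_1^{-1}$ (with $h$ the holomorphic diffeomorphism furnished by $\sim$) is, if anything, a slight clarification of the paper's $\psi_2\circ\psi_1^{-1}$ step, which implicitly chooses compatible local inverses of $\pi$.
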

\begin{proof}
We use the collection of charts specified in Lemma \ref{branch}. Let $(\overline{U}_1,\overline{\psi}_1)$ and $(\overline{U}_2,\overline{\psi}_2)$ be two charts for $\Sigma_0$ arising from open sets $U_1,U_2$ centered at points $p_1,p_2$. We have maps $\psi_i:U_i\to D_i$, $\overline{\psi}_i:\overline{U}_i\to D_i$, $\pi:\Sigma\to \Sigma_0$, and harmonic maps $\overline{f}_i:D_i\to M$ such that $\overline{f}=\overline{f}_i\circ\overline{\psi}_i$, $\psi_i = \overline{\psi}_i\circ \pi$. We show the map $$\overline{\psi}_2\circ \overline{\psi}_1^{-1}:\overline{\psi}_1(\overline{U}_1\cap \overline{U}_2)\subset D_1\to \overline{\psi}_2(\overline{U}_1\cap\overline{U}_2)\subset D_2$$ is holomorphic. 

By the removeable singularities theorem, it suffices to check holomorphy away from the copies of $0$ in $D_i$. Let $[q]\in \overline{U}_1\cap \overline{U}_2$ be so that $\overline{\psi}_i([q])\neq 0$, and choose a neighbourhood $U$ around $[q]$ and $U'\subset \pi^{-1}(U)$ such that
\begin{enumerate}[label=(\roman*)]
\item $0\not \in \overline{\psi}_i(U)$,
    \item the map $\pi|_{U'}:U'\to U$ is injective, and so we can define an inverse $\pi^{-1}:U\to U'$, and
    \item the holomorphic map $\psi_i$ is injective in $U'$, so that we can define a holomorphic inverse $\psi_i^{-1}:\psi_i(U')\to U'$.
\end{enumerate}
Note that $\psi_i(U')=\overline{\psi}_i(U)$. Clearly, the map $\psi_2\circ \psi_1^{-1}$ is holomorphic in $\overline{\psi}_1(U)$. Meanwhile, since we can invert $\pi$, we obtain $$\overline{\psi}_2\circ\overline{\psi}_1^{-1}=(\psi_2\circ \pi_0^{-1})\circ (\psi_1\circ \pi^{-1})^{-1}= \psi_2\circ \psi_1^{-1}.$$ It follows that the map in question is holomorphic near $[q]$, and hence everywhere.

In holomorphic local coordinates, the map $\pi$ is of the form $z\mapsto z$ or $z\mapsto z^n$, so it is surely holomorphic. From conformal invariance of the harmonic map equation, $f_0=\overline{f}_i\circ\overline{\psi}_i$ is harmonic away from images of branch points of $\pi$. The argument of Lemma \ref{qot} shows $f_0$ is globally harmonic.
\end{proof}
This completes the proof of Theorem \ref{main} for holomorphic diffeomorphisms.
\end{section}

\begin{section}{Klein Surfaces} 
We explain the adjustments required to prove Theorem \ref{main} for anti-holomorphic diffeomorphisms $h:\Omega_1\to\Omega_2$.

\subsection{Preparations.} We begin with a review of Klein surfaces. More details on the theory of Klein surfaces can be found in the book \cite{AG}. Set $$\mathbb{C}_+ =\{z\in \mathbb{C}: \textrm{Im}z\geq 0\}$$ to be the closed upper half plane.
\begin{defn}
Let $\Omega\subset \mathbb{C}_+$ be open. A function $f:\Omega\to \mathbb{C}$ is (anti-)holomorphic if there is an open set $U\subset\mathbb{C}$ containing $\Omega$ such that $f$ extends to an (anti-)holomorphic function from $U\to\mathbb{C}$. 
\end{defn}
\begin{defn}
A map between open subsets of $\mathbb{C}$ is dianalytic if its restriction to any component is holomorphic or anti-holomorphic.
\end{defn}
\begin{defn}
 Let $X$ be a topological surface, possibly with boundary. A dianalytic atlas on $X$ is a collection of pairs $\mathcal{U}=\{(U_\alpha,\varphi_\alpha)\}$ where
 \begin{enumerate}[label=(\roman*)]
     \item $U_\alpha$ is an open subset of $X$, $V_\alpha$ is an open subset of $\mathbb{C}_+$, and $\varphi_\alpha : U_\alpha \to V_\alpha$ is a homeomorphism. 
     \item If $U_\alpha\cap U_\beta\neq \emptyset$, the map $$\varphi_\alpha\circ \varphi_\beta^{-1}: \varphi_\beta(U_\alpha \cap U_\beta)\to \varphi_\alpha(U_\alpha\cap U_\beta)$$ is dianalytic.
 \end{enumerate}
A Klein surface is a pair $X=(X,\mathcal{U})$.
\end{defn}
 Closely related is the notion of a Real Riemann surface.
\begin{defn}
A Real Riemann surface is the data $(X,\tau)$ of a Riemann surface $X$ and an anti-holomorphic involution $\tau: X\to X$.
\end{defn}
Given a Real Riemann surface $(X,\tau)$, the quotient $X/\tau$ has the structure of a Klein surface, and as a matter of fact every Klein surface $X$ arises in this fashion (see Chapter 1 in \cite{AG}). The associated Real Riemann surface is called the analytic double, and it is unique up to isomorphism in the category of Real Riemann surfaces. The boundary of the Klein surface corresponds to the fixed-point set of the involution.
\begin{defn}
A harmonic (minimal) map on a Klein surface is a continuous map that lifts to a harmonic (minimal) map on the analytic double with respect to the conformal metric obtained via uniformization.
\end{defn}
To prove Theorem \ref{main} for anti-holomorphic maps, as previously done we define an equivalence relation $\sim$ and build a dianalytic atlas on the topological quotient $\Sigma_0=\Sigma/\sim$. Before we get into details, we make an important reduction:
we apply the holomorphic case of Theorem \ref{main} to $\Sigma$ and acquire a new Riemann surface $\Sigma'$, as well as maps $\pi:\Sigma\to \Sigma'$, $f':\Sigma'\to M$. The key property of the pair $(\Sigma',f')$ is that equivalences classes under Definition \ref{equiv} are singletons.

We define a relation $\sim$ on $\Sigma$ by taking Definition \ref{sim}, but this time insisting the maps involved are merely conformal rather than holomorphic.
\begin{lem}\label{pair}
Given $p\in \Sigma$, there is at most one other point $q\in \Sigma'$ such that $p\sim q$.
\end{lem}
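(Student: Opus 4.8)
\textbf{Proof proposal for Lemma \ref{pair}.}

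The plan is to argue by contradiction using the same Hartman--Wintner analysis and the non-existence of ``accumulating friends'' that underlies Lemma \ref{lonely} and Lemma \ref{nofriends}, now applied to the reduced surface $\Sigma'$. Suppose $p\in\Sigma$ were conformally related to two distinct points $q_1,q_2\in\Sigma'$, so that there are conformal diffeomorphisms $h_i$ from a neighbourhood of $p$ onto a neighbourhood of $q_i$ with $f=f\circ h_i$ and $f'=f'\circ h_i$ (after pushing down via $\pi$). Each $h_i$ is either holomorphic or anti-holomorphic; by composing $h_2$ with $h_1^{-1}$ we obtain a conformal self-map $g=h_1\circ h_2^{-1}$ of a neighbourhood of $q_2$ onto a neighbourhood of $q_1$ in $\Sigma'$ leaving $f'$ invariant. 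If $g$ is holomorphic this already contradicts the key property of $(\Sigma',f')$ that all $\sim$-classes under Definition \ref{sim} are singletons --- unless $q_1=q_2$, which is what we want. So the only case to rule out is when exactly one of $h_1,h_2$ is anti-holomorphic, so that $g$ is anti-holomorphic, and $q_1\neq q_2$.

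First I would dispose of the regular case: if $f'$ (equivalently $f$) is an immersion near $p$, then $f$ is an embedding near $p$, and an embedding cannot identify two distinct points $q_1,q_2$ of its image unless $q_1=q_2$; so both $p$ and its images are singular points, hence zeros of the Hopf differential $\Phi'$ of $f'$. Now choose a holomorphic coordinate $z$ centered at $q_1$ and apply the Hartman--Wintner representation (as in Section 2.3): write $f'^k = p^k + r^k$ with $p^k$ a spherical harmonic of degree $m_k$ and $r^k\in o(|z|^{m_k})$, and since $p$ is singular some $p^k$ has degree $m>1$. Invoking Cheng's lemma \cite[Lemma 2.4]{C} exactly as in the proof of Lemma \ref{lonely}, the level set $\{f'^k=f'^k(q_1)\}$ is a union of $m$ distinct $C^1$ arcs through $q_1$ meeting transversely, with equal successive angles. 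The anti-holomorphic map $g$ must carry the corresponding level set near $q_2$ to this configuration; since $g$ reverses orientation but is still a $C^1$ diffeomorphism, it sends a level-set configuration of $m$ transversal arcs near $q_2$ to one of $m$ transversal arcs near $q_1$, which is consistent only if $f'^k$ has an analogous $m$-arc singularity at $q_2$. This does not by itself produce a contradiction --- the obstruction must come from combining this with the rigidity of $\Sigma'$.

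The cleanest route, and the one I would actually carry out, is: compose to reduce to an \emph{anti-holomorphic self-identification} on $\Sigma'$, then show it forces a \emph{holomorphic} one, contradicting singleton classes. Concretely, since $g$ is anti-holomorphic and leaves $f'$ invariant, $g\circ g$ (suitably restricted and continued) is holomorphic and leaves $f'$ invariant, hence by the singleton property $g\circ g=\id$ wherever defined, i.e. $g$ is an anti-holomorphic \emph{involution} swapping $q_1$ and $q_2$; but more usefully, given that $p\sim q_1$ via holomorphic $h_1$ say, the conjugate $h_1^{-1}\circ g\circ h_1$ (or rather the self-map of a neighbourhood of $p$ it induces on pushing to $\Sigma'$) would produce, after one more composition with $h_2$, a holomorphic diffeomorphism between neighbourhoods of $q_1$ and $q_2$ in $\Sigma'$ leaving $f'$ invariant: this is the contradiction. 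I expect the main obstacle to be bookkeeping the holomorphic-versus-anti-holomorphic parity of the various composed germs and ensuring each composition is defined on a genuine neighbourhood (using Proposition \ref{cont}-style analytic continuation of the natural coordinates for $\Phi'$ to guarantee the germs extend far enough to be composed), and in handling the branch-point case one must check that the reflection symmetry forced by $g$ at a zero of $\Phi'$ is compatible with the rotational structure from Lemma \ref{rot} only when the two images coincide. Once parity is tracked carefully, the singleton property of $(\Sigma',f')$ does all the work and yields $q_1=q_2$.
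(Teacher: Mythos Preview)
Your proposal overlooks the key observation that makes the paper's proof a two-liner. On $\Sigma'$, the holomorphic equivalence classes are singletons by construction. Hence if $p\neq q_i$ and a conformal germ $h_i$ relates them with $f'\circ h_i=f'$, then $h_i$ \emph{cannot} be holomorphic --- a holomorphic $h_i$ would itself witness $p\sim q_i$ in the sense of Definition \ref{sim}, contradicting the singleton property. So both $h_1$ and $h_2$ are forced to be anti-holomorphic, and the composition $h_2\circ h_1^{-1}$ is automatically holomorphic, yielding $q_1\sim q_2$ holomorphically and hence $q_1=q_2$. Your ``remaining case'' where exactly one $h_i$ is anti-holomorphic is therefore vacuous, and the entire Hartman--Wintner/Cheng detour, the involution argument, and the parity bookkeeping are unnecessary.

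There are also internal problems with the detour. The claim that an immersion near $p$ forces $q_1=q_2$ is not justified: $q_1$ and $q_2$ need not lie in the embedded neighbourhood of $p$, so local injectivity of $f$ near $p$ says nothing about them. And the proposed conjugate $h_1^{-1}\circ g\circ h_1$ does not type-check: $h_1$ lands near $q_1$ while $g=h_1\circ h_2^{-1}$ has domain near $q_2$, so the composition is undefined unless $q_1=q_2$. Finally, for the case where some point is a zero of $\Phi$, the paper does not invoke Hartman--Wintner at all; it simply uses clause (2) of the definition of $\sim$ to pass to nearby non-zero points and reduce to the case already handled.
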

\begin{proof}
Suppose $p,q_1,q_2$ are distinct points and $p\sim q_1$ and $p\sim q_2$. If all points are not in $\Z$, then we have anti-holomorphic maps $h_1,h_2$ relating to $q_1,q_2$ to $p$. The composition $h_2\circ h_1^{-1}$ is then a holomorphic map relating $q_1$ to $q_2$, which means they are equivalent for Definition \ref{equiv}, and this is impossible. If at least one of them is a zero, then we can find disjoint neighbourhoods $\Omega$ containing $p$ and $\Omega_i$ containing $q_i$ such that every point in $\Omega_1\backslash\{q_1\}$ is equivalent to a point in $\Omega\backslash\{p\}$, and every point in $\Omega\backslash\{p\}$ is equivalent to a point in $\Omega_2\backslash\{q_2\}$. This brings us to the non-zero case.
\end{proof}
By the previous lemma, transitivity for $\sim$ holds vacuously. Accordingly, the proof of the lemma below is trivial.
\begin{lem}
$\sim$ is an equivalence relation.
\end{lem}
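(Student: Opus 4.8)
The plan is to verify the three axioms of an equivalence relation for $\sim$ directly, leaning on Lemma \ref{pair} to dispose of transitivity essentially for free. Reflexivity is immediate: the identity map $\id:\Omega\to\Omega$ on any open set $\Omega$ containing a point $p\notin\Z$ is conformal and trivially satisfies $f=f\circ\id$, and the case of a zero of $\Phi$ follows by applying this to all nearby points. Symmetry is equally routine: if $p_1\sim p_2$ via a conformal diffeomorphism $h:\Omega_1\to\Omega_2$ with $f\circ h=f$, then $h^{-1}:\Omega_2\to\Omega_1$ is also conformal and satisfies $f\circ h^{-1}=f$, giving $p_2\sim p_1$; the zero case is symmetric in the definition.

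The only content is transitivity, and here the observation is that Lemma \ref{pair} makes it vacuous in the following sense. Suppose $p_1\sim p_2$ and $p_2\sim p_3$. If $p_1=p_3$ there is nothing to prove (reflexivity). If $p_1\ne p_3$, then $p_2$ is a point with two distinct partners $p_1$ and $p_3$ in its $\sim$-class. But wait — one should be careful that Lemma \ref{pair} is phrased for $p\in\Sigma$ and $q\in\Sigma'$, i.e.\ in the reduced setting where $\sim$-classes for the \emph{holomorphic} relation are singletons. So I would note that after the reduction to the pair $(\Sigma',f')$ we are working on $\Sigma'$ throughout, and Lemma \ref{pair} literally says: for each $p$, at most one other point is $\sim$-equivalent to it. Hence the configuration $p_1\ne p_3$ with $p_1\sim p_2\sim p_3$ and all three distinct is forbidden directly by Lemma \ref{pair} applied to $p_2$. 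Therefore $p_1=p_3$, and transitivity holds trivially.

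I expect the main (and really the only) subtlety to be bookkeeping about \emph{which} surface and which relation we are on: the lemma just proved is stated after the reduction, so one must make clear that $\sim$ here denotes the conformal relation on $\Sigma'$ (with $f'$ in place of $f$), and that Lemma \ref{pair} is exactly the statement that $\sim$-classes have at most two elements. Given that, the proof is a one-line appeal: ``Reflexivity and symmetry are clear from the definition; transitivity is vacuous by Lemma \ref{pair}, since that lemma forbids three distinct mutually-related points." No estimates, no analytic continuation, and no Hopf-differential geometry are needed at this step — all the real work was already done in Lemma \ref{pair}.
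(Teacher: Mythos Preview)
Your proposal is correct and matches the paper's own proof, which is literally the single remark ``By the previous lemma, transitivity for $\sim$ holds vacuously.'' One cosmetic point: in your transitivity case $p_1\ne p_3$ you should first dispose of the trivial subcases $p_1=p_2$ or $p_2=p_3$ before asserting that $p_2$ has two distinct partners and invoking Lemma~\ref{pair}; with that adjustment your conclusion is that the three-distinct configuration cannot occur (rather than ``therefore $p_1=p_3$''), but this is pure bookkeeping and not a substantive gap.
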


\subsection{Proof of the main theorem.} Referencing our earlier work, we prove Theorem \ref{main} for anti-holomorphic $h$. Henceforth we abuse notation and set $\Sigma=\Sigma'$, $f=f'$.

The first thing to note is that $h$ is an orientation-reversing isometry for the $\Phi$-metric. Indeed, if $\Phi$ does not vanish on an open subset $U\subset \Omega_1$ and $z$ is a natural coordinate for $\Phi$, then the function $$w=\iota\circ z\circ h^{-1}$$ defines a holomorphic coordinate on $h(U)$, where $\iota$ is the complex conjugation operator on the disk. In this coordinate, $w(h(z))=\overline{z}$, and it can be easily checked that $$df_p\Big ( \frac{\partial}{\partial z}\Big ) = df_{h(p)}\Big ( \frac{\partial}{\partial \overline{w}}\Big )\in T_{f(z)}M\otimes \mathbb{C}.$$ We infer $$\langle f_{\overline{w}}, f_{\overline{w}}\rangle = 1$$ and furthermore $$\langle f_{w}, f_{w}\rangle = \overline{\langle f_{\overline{w}}, f_{\overline{w}}\rangle}= 1.$$  As in Lemma \ref{isometry}, we find that $w$ is a natural coordinate for $\Phi$. The result follows.

Moreover, we can analytically continue $h$ exactly as we did in Proposition \ref{cont}. Moving toward the main proof, we follow the proof of Lemma \ref{step}, word-for-word, and note that Lemma \ref{below} is immediate from Lemma \ref{pair}. The proof of the analogue of Proposition \ref{Hausdorff} follows. As for ramification, we do see new behaviour.

\begin{lem}
Suppose $p$ is a zero of $\Phi$ of order $n\geq 0$. Let $h:\Omega_1\to\Omega_2$ be an anti-holomorphic diffeomorphism with $f\circ h =f$, and so that $\Omega_1,\Omega_2$ are both contained in a ball $B_\epsilon(p)$, where $\epsilon>0$ is chosen so that there are no other zeros and no other point is equivalent to $p$ in $B_{2\epsilon}(p)$. Then, in the natural coordinates for $\Phi$, $$h(z) = e^{\frac{2\pi i j}{n+2}}\overline{z}$$ on its domain.
\end{lem}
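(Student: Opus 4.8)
The plan is to imitate the proof of Lemma~\ref{rot} almost verbatim, the only genuinely new ingredient being the bookkeeping of complex conjugations. Fix a natural coordinate $z$ for $\Phi$ on $B_\epsilon(p)$ with $z(p)=0$, so that $\langle f_z,f_z\rangle(z)=z^n$. First I would show $h(p)=p$. Pick $p_i\in\Omega_i$ with $h(p_1)=p_2$ and let $\gamma\colon[0,1]\to B_\epsilon(p)$ be the straight segment from $p_1$ to $p$. Since $h$ is an orientation-reversing isometry of the $\Phi$-metric and continues analytically along $\gamma$ exactly as in Proposition~\ref{cont}, the image under $h$ of any continued sub-segment has $\Phi$-length at most the length of $\gamma$ and so remains in $B_{2\epsilon}(p)$, which contains no zero of $\Phi$ other than $p$. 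Hence either some interior point $q$ of $\gamma$ is carried onto $p$ --- and then the anti-holomorphic analogue of Proposition~\ref{Hausdorff} gives $q\sim p$, forcing $q=p$ by the choice of $\epsilon$, a contradiction --- or $h$ is continued all the way to $\gamma(1)=p$, in which case the same analogue gives $p\sim h(p)$ and therefore $h(p)=p$.

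Next I would extract the functional equation. Working in the simply connected region to which $h$ has been continued, set $H=\iota\circ h$, i.e.\ $H(z)=\overline{h(z)}$; this is holomorphic with $H(0)=0$. Differentiating $f\circ h=f$ and using that $h$ is anti-holomorphic (so $\partial(h(z))/\partial z=0$ and $\partial\overline{h(z)}/\partial z=H'(z)$) gives $f_z=H'(z)\,f_{\bar z}(h(z))$. Pairing with $\langle\cdot,\cdot\rangle$, using $\langle f_{\bar z},f_{\bar z}\rangle=\overline{\langle f_z,f_z\rangle}$ (which holds because $\nu$ is real and $f$ real-valued) and $\langle f_z,f_z\rangle(z)=z^n$, I obtain
$$z^n=(H'(z))^2\,\overline{(h(z))^n}=(H'(z))^2\,H(z)^n,$$
the last equality being $\overline{(h(z))^n}=\overline{\overline{H(z)^n}}=H(z)^n$. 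This is exactly the equation solved in Lemma~\ref{rot}, now with $H$ in the role of $h$.

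Finally, following Lemma~\ref{rot}, I pick a branch of the square root on the (simply connected, $0$-free) region, write $z^{n/2}=\pm H(z)^{n/2}H'(z)=\pm\frac{\partial}{\partial z}\frac{H(z)^{n/2+1}}{n/2+1}$, integrate to get $z^{n/2+1}=\pm H(z)^{n/2+1}+c$, and let $z\to 0$ along $\gamma$; since $H(0)=0$ this forces $c=0$, hence $z^{n+2}=H(z)^{n+2}$ on the whole region, so $H(z)=e^{2\pi i j/(n+2)}z$ for a single integer $j$ by connectedness. Conjugating back, $h(z)=\overline{H(z)}=e^{-2\pi i j/(n+2)}\,\overline{z}$, which is of the asserted form after replacing $j$ by $n+2-j$. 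The only step I expect to require genuine care is the second paragraph --- verifying that $H=\iota\circ h$ is holomorphic, that $\langle f_{\bar z},f_{\bar z}\rangle=\overline{\langle f_z,f_z\rangle}$, and that $\overline{(h(z))^n}=H(z)^n$ --- so that the equation for $H$ comes out holomorphic and identical in form to the one already handled; the analytic continuation, the unique-continuation input (the analogue of Proposition~\ref{Hausdorff}), and the integration then go through unchanged.
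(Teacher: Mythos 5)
Your proposal is correct and follows essentially the same route as the paper: establish $h(p)=p$ by continuing $h$ along a radial segment and invoking the anti-holomorphic analogue of Proposition~\ref{Hausdorff}, observe that $H=\overline{h}$ is holomorphic and satisfies $z^n=(H'(z))^2 H(z)^n$, and then integrate exactly as in Lemma~\ref{rot}. Your careful derivation of the functional equation in fact corrects what appears to be a typo in the paper, which writes the right-hand side as $(\overline{h}(z))^2(\partial\overline{h}/\partial z)^2$ where the exponent on $\overline{h}(z)$ should be $n$.
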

\begin{proof}
We follow the proof of Lemma \ref{rot}, except now we have a map $h$ that satisfies $$z^n = (\overline{h}(z))^2 \Big (\frac{\partial \overline{h}}{\partial z}\Big )^2.$$ As in the proof of Lemma \ref{rot}, $h$ is defined in a simply connected open set whose distance to zero can be taken to be arbitrarily small. We observe that $\overline{h}$ is holomorphic, and take a branch of the square root and integrate to derive $$\overline{h}(z) = e^{-\frac{2\pi i j}{n+2}}z$$ for some $j=0,1,\dots, n+1$. We conjugate to finish the proof.
\end{proof}
The lemma implies that in a neighbourhood of a ramification point $p$, $f$ is invariant under the map $$\psi(z) = e^{\frac{2\pi i j}{n+2}} \overline{z}.$$ This is an anti-holomorphic involution that fixes every point on the line $$\mathcal{L}=\{re^{\frac{\pi i j}{n+2}}: -1<r<1\}$$ and acts by reflection across this line on all other points.
\begin{lem}
Let $p$ and $\psi$ be as above. If $\psi(q) =q $, then $q$ has no equivalent points with respect to $\sim$.
\end{lem}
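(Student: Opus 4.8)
The plan is to argue by contradiction, exploiting the defining feature of the reduced surface $\Sigma=\Sigma'$: every equivalence class for the \emph{holomorphic} relation of Definition~\ref{sim} is a singleton. The map $\psi$ is an anti-holomorphic involution of $B_\epsilon(p)$ with $f\circ\psi=f$, and $q$ is a fixed point of it. If $q\sim q'$ for some $q'\neq q$, then a conformal diffeomorphism witnesses the equivalence; composing it with $\psi$ flips its parity, so an anti-holomorphic witness becomes a holomorphic one, which is forbidden because it would place $q$ (or a point near it) in a non-trivial holomorphic class.

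Concretely, suppose first $q\neq p$. Since $p$ is the only zero of $\Phi$ in $B_{2\epsilon}(p)$ and $q$ lies in the domain of $\psi$, we have $q\notin\Z$, and since conformal diffeomorphisms preserving $f$ are isometries of the $\Phi$-metric carrying natural coordinates to natural coordinates, any point equivalent to $q$ also lies outside $\Z$. Hence Definition~\ref{sim}(1) (in its conformal form) provides a conformal diffeomorphism $g\colon\Omega_1\to\Omega_2$ with $q\in\Omega_1\subset B_\epsilon(p)$, $g(q)=q'$, and $f\circ g=f$. If $g$ is holomorphic, then $q$ and $q'$ are holomorphically equivalent, contradicting that the class of $q$ is a singleton. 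If $g$ is anti-holomorphic, replace $\Omega_1$ by $\Omega_1\cap\psi(\Omega_1)$, so that $\psi(\Omega_1)=\Omega_1$; then $g\circ\psi$ is holomorphic, satisfies $f\circ(g\circ\psi)=f$, and sends $q$ to $g(\psi(q))=g(q)=q'$, again contradicting the singleton property.

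Now suppose $q=p$ and $p\sim q'$ with $q'\neq p$. By the choice of $\epsilon$, no point other than $p$ is equivalent to $p$ in $B_{2\epsilon}(p)$, so $q'\notin B_{2\epsilon}(p)$, and we may pick disjoint open sets $\Omega_1\subset B_\epsilon(p)$ containing $p$ and $\Omega_2$ containing $q'$; let $\Omega_1'\subset\Omega_1$, $\Omega_2'\subset\Omega_2$ be the smaller sets provided by Definition~\ref{sim}(2). The mirror line $\mathcal{L}$ has empty interior, so choose $p_1\in\Omega_1'\setminus(\{p\}\cup\mathcal{L})$; then $p_1$ is a regular point and $\psi(p_1)\neq p_1$. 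Pick $p_1'\in\Omega_2'\setminus\{q'\}$ with $p_1\sim p_1'$, realized by a conformal $g$ with $g(p_1)=p_1'$ and $f\circ g=f$. If $g$ is holomorphic, the singleton property forces $p_1=p_1'$, impossible since $\Omega_1'\cap\Omega_2'=\emptyset$. If $g$ is anti-holomorphic, then $g\circ\psi$ is holomorphic on a neighbourhood of $\psi(p_1)$, leaves $f$ invariant, and carries $\psi(p_1)$ to $g(\psi(\psi(p_1)))=g(p_1)=p_1'$; hence $\psi(p_1)$ and $p_1'$ are holomorphically equivalent, so $\psi(p_1)=p_1'$. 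But $\psi(p_1)\in\psi(\Omega_1)\subset B_\epsilon(p)$, which is disjoint from $\Omega_2\supset\Omega_2'\ni p_1'$ — a contradiction.

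The hard part here is purely the domain bookkeeping that makes $g\circ\psi$ well defined and that, in the case $q=p$, guarantees $\psi(p_1)\neq p_1'$; this is why $q'$'s neighbourhood must be taken outside $B_{2\epsilon}(p)$ (legitimate precisely because the choice of $\epsilon$ forbids new equivalences at $p$) and why $p_1$ is chosen off the mirror line $\mathcal{L}$. Everything else reduces to the elementary parity fact that $g\circ\psi$ is holomorphic when both $g$ and $\psi$ are anti-holomorphic, together with the singleton property inherited from the holomorphic reduction at the start of the section.
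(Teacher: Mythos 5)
Your proof is correct, and it takes a route that differs noticeably from the paper's. The paper argues by building a three-element equivalence class: it asserts $q' \notin B_\epsilon(p)$, then observes that $\psi$ pairs up points $p_1 \sim \psi(p_1)$ in a small disk around $q$, while the assumed equivalence $q \sim q'$ produces a third point $q''$ near $q'$ with $p_1 \sim q''$; this violates Lemma~\ref{pair}'s bound of two points per class. You instead compose the witnessing conformal diffeomorphism $g$ directly with $\psi$: when $g$ is anti-holomorphic, $g \circ \psi$ is holomorphic and carries $q$ (resp.\ $\psi(p_1)$) to $q'$ (resp.\ $p_1'$), and the singleton property of holomorphic classes forces equality, which the geometry rules out. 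The approaches are equivalent in substance --- both ultimately rest on the parity of compositions with $\psi$ and on the holomorphic reduction done at the start of Section~4 --- but yours is more explicit: in particular, it gives a self-contained justification for what the paper simply asserts as ``$q' \notin B_\epsilon(p)$'' (which, for $q \neq p$, is itself a parity argument), and it avoids invoking Lemma~\ref{pair} in the $q \neq p$ case.

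One small point of bookkeeping: your second case ($q = p$) reaches for Definition~\ref{sim}(2), which applies only when at least one of $p, q'$ lies in $\Z$. The lemma's hypothesis allows $n = 0$, i.e.\ $p \notin \Z$, and if in addition $q' \notin \Z$ then one must instead use part~(1) of the definition. This is harmless --- your first-case argument only uses $q \notin \Z$, not $q \neq p$, so it applies verbatim when $q = p \notin \Z$ --- but the case split would read more cleanly as $q \in \Z$ (forcing $q = p$) versus $q \notin \Z$ (covering $q \neq p$ and the regular-$p$ instance of $q = p$). Similarly, your remark that ``any point equivalent to $q$ also lies outside $\Z$'' is correct in spirit (a $\Phi$-isometry cannot identify a cone point with a smooth point) but deserves a sentence: it follows from the fact, established in the paper's analytic-continuation and ramification lemmas, that equivalences respect the local structure of the flat $\Phi$-metric and hence cannot match a zero of $\Phi$ to a non-zero.
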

\begin{proof}
$\psi$ is two-to-one in a neighbourhood of $q$. Suppose there exists $q'\in \Sigma$ with $q\sim q'$. Then $q'\not \in B_{\epsilon}(p)$. Using the definition of $\sim$, we can find a small disk $B_{\epsilon'}(q)$ and points $p_1,p_2\in B_{\epsilon'}(q)$ with $p_1\sim p_2$, but we can also find a point $q''$ near $q'$ such that $p_1\sim q''$. This contradicts Lemma \ref{pair}.
\end{proof}
We deduce the following.
\begin{lem}
Every $q\in B_\epsilon(p)\backslash\mathcal{L}$ is equivalent to $\psi(q)$ and only $\psi(q)$.
\end{lem}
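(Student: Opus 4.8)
The plan is to show that for $q \in B_\epsilon(p) \setminus \mathcal{L}$, the only point equivalent to $q$ under $\sim$ is $\psi(q)$, exploiting that $f = f \circ \psi$ and that $\psi$ is an anti-holomorphic involution acting by reflection across $\mathcal{L}$. First I would observe that $q \sim \psi(q)$ directly from the definition of $\sim$: since $q \notin \mathcal{L}$, the involution $\psi$ is a local anti-holomorphic diffeomorphism near $q$ (it is two-to-one only along $\mathcal{L}$, where it has fixed points), so it restricts to an anti-holomorphic diffeomorphism from a small neighborhood of $q$ onto a small neighborhood of $\psi(q)$, and it leaves $f$ invariant by the previous lemma. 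Hence $q$ has at least one equivalent point, namely $\psi(q)$, and since $q \notin \mathcal{Z}$ (recall $\epsilon$ was chosen so the only zero of $\Phi$ in $B_{2\epsilon}(p)$ is $p$ itself), this is an honest $\sim$-equivalence between non-zero points.

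Next I would invoke Lemma \ref{pair}: given $p \in \Sigma$, there is at most one other point $q' \in \Sigma$ with the relation $p \sim q'$. Applying this with the roles reversed — i.e., $q$ plays the role of the ``$p$'' in Lemma \ref{pair} — there is at most one point other than $q$ equivalent to $q$. Since we have already produced one such point, $\psi(q)$, and $\psi(q) \neq q$ (as $q \notin \mathcal{L}$ and $\psi$ fixes exactly $\mathcal{L}$), it follows that $\psi(q)$ is the \emph{unique} point equivalent to $q$. This closes the argument.

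The only subtlety to check carefully is that $\psi(q) \neq q$ whenever $q \in B_\epsilon(p) \setminus \mathcal{L}$, but this is immediate from the explicit description of $\psi$ in the preceding lemmas: $\psi(z) = e^{2\pi i j/(n+2)}\overline{z}$ fixes precisely the line $\mathcal{L} = \{ r e^{\pi i j/(n+2)} : -1 < r < 1\}$ and is a genuine reflection (hence fixed-point-free) off of it. A second point worth a remark is that Lemma \ref{pair} was stated for points of $\Sigma' = \Sigma$ (after the reduction identifying $\Sigma$ with $\Sigma'$), which is exactly the setting here, so it applies without modification. I do not anticipate a genuine obstacle in this lemma; the real content was already absorbed into Lemma \ref{pair} and into the identification of $\psi$ as the relevant involution. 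The proof is essentially a two-line combination of those facts with the elementary geometry of the reflection $\psi$.
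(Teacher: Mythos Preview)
Your proposal is correct and matches the paper's intended argument: the paper presents this lemma with no proof, merely writing ``We deduce the following'' after the preceding lemma, and your two-step reasoning (existence of the equivalence $q\sim\psi(q)$ from the explicit involution, uniqueness from Lemma~\ref{pair}) is exactly the deduction the reader is meant to supply. Your care in noting $q\notin\mathcal{Z}$ and $\psi(q)\neq q$ fills in the only details worth mentioning.
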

We say $f$ anti-holomorphically ramifies near $p$ if $f$ is invariant under an anti-holomorphic involution in a neighbourhood of $p$. In contrast to the holomorphic definition, $f$ can anti-holomorphically ramify near rank $1$ singularities. If $f$ does ramify at $p$, we form the quotient $$\mathcal{K}= B_\epsilon(p)/\psi$$ by identifying points $z$ and $\psi(z)$. This has the structure of a Klein surface with boundary, the boundary being identified with $\mathcal{L}$.

\begin{lem}\label{ramify}
$p\in \Sigma$ satisfies $[p]=\{p\}$ if and only if $f$ ramifies at $p$.
\end{lem}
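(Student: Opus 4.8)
The plan is to establish the two implications separately, reducing each to the local description of anti-holomorphic symmetries of $f$ obtained in the lemmas above. Suppose first that $f$ ramifies at $p$. Restricting the ramifying anti-holomorphic involution to a $\Phi$-disk about $p$ small enough that the concentric disk of twice the radius contains no zero of $\Phi$ other than possibly $p$ and, by Lemma \ref{pair}, no point equivalent to $p$ other than $p$, the lemma expressing anti-holomorphic symmetries in natural coordinates forces that involution to be $z\mapsto e^{2\pi i j/(n+2)}\overline z$, which fixes $p$; the lemma that $\psi$-fixed points have no equivalent points, applied at $p$ itself, then gives $[p]=\{p\}$.

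Conversely, assume $[p]=\{p\}$, and fix $\epsilon>0$ so that $B_{2\epsilon}(p)$ contains no zero of $\Phi$ other than possibly $p$ and (automatically, since $[p]=\{p\}$) no point equivalent to $p$ other than $p$. By the lemma just used it suffices to produce an anti-holomorphic local diffeomorphism that fixes $p$ and leaves $f$ invariant, since any such map is then the involution $z\mapsto e^{2\pi i j/(n+2)}\overline z$ and $f$ ramifies at $p$. Starting from the global map $h$, analytically continue it along paths towards $p$ as in Proposition \ref{cont}: a continuation is obstructed only when the image of the path meets $\Z$, and such an obstruction is removed by rerouting the path slightly so that its image detours around the offending zero --- legitimate because, away from $\Z$, the continued $h$ is a local isometry for the $\Phi$-metric and hence a local diffeomorphism. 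If $p\notin\Z$ this produces a symmetry germ $g$ of $f$ at $p$ itself; since $g(p)\sim p$ and $[p]=\{p\}$, $g$ fixes $p$ and we are done.

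If instead $p\in\Z$, the same continuation gives anti-holomorphic symmetry germs $g_n$ of $f$, with $f\circ g_n=f$, centred at points $q_n\notin\Z$ with $q_n\to p$. If $g_n(q_n)=q_n$ then $d(g_n(q_n),p)=d(q_n,p)\to0$; otherwise $g_n(q_n)=:q_n''\neq p$ (else $q_n\sim p$, contradicting $[p]=\{p\}$) with $q_n\sim q_n''$, and the $q_n''$ cannot remain a fixed distance from $p$, since a limit point $q_\infty\neq p$ of such a subsequence would satisfy $p\sim' q_\infty$, witnessed by the pairs $(q_n,q_n'')$, and hence $p\sim q_\infty$ by the analogue of Proposition \ref{Hausdorff}, against $[p]=\{p\}$. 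Thus $\liminf_n d(g_n(q_n),p)=0$; choose $n$ with $d(q_n,p)<\epsilon/2$ and $d(g_n(q_n),p)<\epsilon/2$ and continue $g_n$ along the $\Phi$-geodesic from $q_n$ to $p$. As $g_n$ is a $\Phi$-isometry, the image geodesic has length $<\epsilon/2$ and starts within $\epsilon/2$ of $p$, hence stays in $B_{2\epsilon}(p)$ and meets no zero except possibly $p$; arguing exactly as in the proof of Lemma \ref{rot}, either the continuation reaches $p$ or some interior point of the geodesic is carried onto the zero $p$ --- and the latter would make that interior point equivalent to $p$, hence equal to $p$, which is absurd. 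So we obtain an anti-holomorphic symmetry of $f$ fixing $p$, and the proof is complete.

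The crux is the continuation step near the zeros of $\Phi$: one must check that the rerouting around a zero genuinely succeeds, so that symmetry germs really do exist at $p$ (or at points arbitrarily close to $p$ when $p$ is a zero), and then one must keep the image of the geodesic clear of the other zeros while sliding a symmetry onto a zero. The $\Phi$-metric --- for which $h$ and all of its continuations are local isometries, so that images of short geodesics are again short --- is exactly what makes this bookkeeping tractable, and is where the geometry of the Hopf differential substitutes for the unique continuation property invoked in the minimal case.
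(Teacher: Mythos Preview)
Your approach differs from the paper's. The paper argues by contradiction: assuming $[p]=\{p\}$ with $f$ unramified at $p$, it first shows (via a short \emph{local} continuation argument entirely inside a small $\Phi$-disk around $p$) that $[q]=\{q\}$ for every $q$ in that disk. It then defines a global map $\tau:\Sigma\to\Sigma$ by $\tau(q)=q'$ if $[q]=\{q,q'\}$ and $\tau(q)=q$ otherwise, observes that $\tau$ is real analytic everywhere (being locally the identity, an anti-holomorphic diffeomorphism, or a reflection $\psi$), and derives a contradiction: $\tau$ is the identity on an open set near $p$ yet genuinely anti-holomorphic on $\Omega_1$.

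You instead try to manufacture an anti-holomorphic germ at $p$ directly, by continuing the given $h$ from $\Omega_1$ all the way across $\Sigma$ to $p$. This is where the argument is incomplete. The ``rerouting'' step --- deflecting the image path around a zero of $\Phi$ --- is locally plausible since the continued $h$ is a local $\Phi$-isometry, but you give no argument that the \emph{global} continuation succeeds: one must organise the reroutes coherently along a possibly long path and verify that nothing else obstructs the process. You flag this yourself in the last paragraph but do not resolve it. Separately, in the case $p\in\Z$, your claim that the $q_n'':=g_n(q_n)$ cannot stay bounded away from $p$ presumes a limit point $q_\infty$ exists; on a non-compact $\Sigma$ this requires an extra argument (e.g.\ a uniform $\Phi$-distance bound on the $q_n''$ from a fixed point, via the isometry property of the continuation), which you do not supply.

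The paper's route sidesteps both issues: $\tau$ is defined pointwise from the equivalence relation rather than by continuation from $\Omega_1$, and the contradiction comes from the identity theorem for real-analytic maps, so no long-range propagation of $h$ is ever needed.
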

\begin{proof}
We need only to show that every point at which $f$ is unramified admits an equivalent point. Looking toward a contradiction, suppose there exists $p\in \Sigma$ with $[p]=\{p\}$ and at which $f$ does not ramify and choose $\epsilon>0$ so that no two points are equivalent in $B_\epsilon(p)$ and that there are no zeros of $\Phi$ in $B_{2\epsilon}(p)$.

We claim $[q]=\{q\}$ for every $q\in B_\epsilon(p)$. If not, there is a $q\in B_\epsilon(p)$ that admits an equivalent point $q'\neq q$. Let $h$ be the anti-holomorphic diffeomorphism relating a neighbourhood of $q$ to one of $q'$. In coordinates, analytically continue $h$ along a straight line $\gamma$ from $q$ to $p$. It follows from our assumption $\{p\}=[p]$ that no segment $h(\gamma')$ for $\gamma'\subset \gamma$ can touch $p$, for otherwise we get a point equivalent to $p$. Thus, we can continue to the endpoint, and the endpoint of $h(\gamma)$ is $p$ itself. This implies $$d(p,q)=d(p,q'),$$ which contradicts our choice of $\epsilon>0$, and therefore settles the claim. 

With the claim in hand, we define a map $$\tau: \Sigma \to \Sigma$$ as follows. If $[q]=\{q\}$, set $\tau(q)=q$. If $[q]=\{q,q'\}$, we put $\tau(q)=q'$. If $f$ is unramified at $q$ and $[q]=\{p,q\}$, then $\tau$ is an anti-holomorphic diffeomorphism near p. If $[q]=\{q\}$, then our claim above shows it is the identity map in a neighbourhood of $q$. If $f$ ramifies at $q$, $\tau$ acts like the map $\psi$ considered above. In any event, $\tau$ is real analytic. Since we know the set $\{q: |[q]|=2\}$ is non-empty, $\tau$ is globally anti-holomorphic and moreover cannot fix the point $p$. This gives a contradiction.
\end{proof}

We now come to the main goal. Simply take the anti-holomorphic map $\tau$ defined in the proof above. Checking on a topological base for $\Sigma$, it is clear that $\tau$ is a continuous and open mapping. As $\tau^2=1$, it is an anti-holomorphic diffeomorphism of $\Sigma$. The quotient $$\Sigma_0 =\Sigma/\tau=\Sigma/\sim$$ is the sought Klein surface.
\begin{remark}
We can read off an atlas as follows. If $p$ is not a ramification point, $\sim$ identifies a small neighbourhood of $p$ with no ramification points to some other neighbourhood. The coordinate chart near $p$ then gives the chart on $\Sigma_0$. Transition maps can be holomorphic or anti-holomorphic. If $p$ is a ramification point, the quotient gives us a space $\mathcal{K}$ as above, with two different choices for coordinates: natural coordinates for $\Phi$, or the complex conjugation of those coordinates. Both holomorphic and anti-holomorphic transition maps exist. We omit the technical details.
\end{remark}

With regard to Theorem \ref{main}, we are left to discuss the projection $\pi:\Sigma \to \Sigma_0$ and the harmonic map $f$. The remark gives coordinate expressions for $\pi$ in which we see it is dianalytic. $\Sigma$ is actually the analytic double of $\Sigma_0$, and $f$ clearly descends to a continuous map $f_0$ on $\Sigma_0$ that is harmonic by definition. This finishes the proof of Theorem \ref{main}.

\subsection{Minimal Klein surfaces} For completeness, we extend the work of Gulliver-Osserman-Royden on minimal maps to the anti-holomorphic case. To the author's knowledge, the result of this subsection is new.

We begin with a minimal map $f:(\Sigma,\mu)\to (M,\nu)$ and anti-holomorphic $h:\Omega_1\to \Omega_2$ such that $f\circ h = f$. As in our approach for non-minimal maps, we first apply \cite[Proposition 3.24]{GOR} to assume $\Sigma$ has no points that are holomorphically related. We then define $\sim$ exactly as in subsection 1.1, but allow the diffeomorphisms involved to be conformal. The application of their result assures that Lemma \ref{pair} goes through for $\sim$. The proof of Proposition 3.14 in \cite{GOR} applies to the map $f$, which proves the relation $\sim$ is Hausdorff.

For ramification, the distinction is that $\Phi=0$, so we cannot apply the usual methods. At the same time, all singular points are good branch points. Recall from subsection 1.1 that near a branch point $p$ of order $m$ we can find a neighbourhood of $p$ with a holomorphic coordinate $z$ and coordinates $(x_1,\dots, x_n)$ around $f(p)$ so that $f$ is given by $$x_1=\re z^m \hspace{1mm} , \hspace{1mm} x_2=\im z^m \hspace{1mm}, \hspace{1mm} x_k = \eta_k(z) \hspace{1mm} , \hspace{1mm} k\geq 3,$$ where $\eta_k(z) \in o(|z|^m)$. If we have distinct $p_1,p_2$ in this neighbourhood with $p_1\sim p_2$, then the anti-holomorphic map $h$ that relates the two must satisfy $$(h(z))^m = \overline{z}^m.$$ Consequently, $h$ is of the form $$h(z) = e^{\frac{2\pi i j}{m}}\overline{z}$$ for some $j=0,1,\dots, m-1$. Up until Lemma \ref{ramify}, one can run through subsection 4.2 almost word-for-word. The only difference is that we use coordinate disks rather than natural coordinates for a holomorphic differential. The analogue of Lemma \ref{ramify} can be worked out without difficulty.
\begin{lem}
In this setting, $p\in \Sigma$ satisfies $[p]=\{p\}$ if and only if $f$ ramifies at $p$.
\end{lem}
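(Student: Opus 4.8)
The plan is to follow the proof of Lemma \ref{ramify} essentially verbatim, with two substitutions adapted to the minimal setting: the good‑branch‑point coordinates of subsection 1.1 play the role of natural coordinates for $\Phi$, and the pulled‑back metric $f^*\nu$ plays the role of the $\Phi$‑metric. Since $f$ is weakly conformal, $f^*\nu$ is a genuine conformal metric with an isolated cone‑type singularity of cone angle $2\pi m$ at each branch point of order $m-1$ (immediate from the normal form $x_1=\re z^m$, $x_2=\im z^m$, $x_k\in o(|z|^m)$), so $h$ being conformal with $f\circ h=f$ forces $h$ to be a local isometry for $f^*\nu$, and the analytic continuation of $h$ proceeds exactly as in Proposition \ref{cont}. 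One direction of the lemma — a ramification point is a singleton — needs no new work: it follows from the ramification analysis of subsection 4.2 transported to the minimal case (every equivalence is governed by $\psi$‑orbits, and Lemma \ref{pair} forbids a third partner). So it remains to show that every point $p$ at which $f$ is unramified admits an equivalent point other than itself.

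I would argue this by contradiction. Suppose $[p]=\{p\}$ with $f$ unramified at $p$, and fix a coordinate disk $B_\epsilon(p)$ — the good‑branch‑point coordinate $z$ if $p$ is a branch point, an arbitrary conformal coordinate otherwise — small enough that no two points of $B_{2\epsilon}(p)$ are equivalent under $\sim$. The central claim is that $[q]=\{q\}$ for every $q\in B_\epsilon(p)$. Indeed, if some $q\in B_\epsilon(p)$ had an equivalent point $q'\neq q$, take the associated anti‑holomorphic diffeomorphism $h$ with $f\circ h=f$; continuing $h$ along the straight segment $\gamma$ from $q$ to $p$, no initial subsegment $h(\gamma')$ can reach $p$, since that would produce a point equivalent to $p$ inside $B_{2\epsilon}(p)$, contradicting $[p]=\{p\}$ and the choice of $\epsilon$. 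Hence the continuation reaches the endpoint $p$ and $h(\gamma(1))=p$, which gives $d(p,q)=d(p,q')$ — contradicting the choice of $\epsilon$. Here $d$ may be taken to be either the $f^*\nu$‑distance or simply the Euclidean distance of the coordinate disk, since $h$ is conformal and the paths involved are short.

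Given the claim, I would assemble the anti‑holomorphic involution $\tau:\Sigma\to\Sigma$ exactly as in Lemma \ref{ramify}: set $\tau(q)=q$ when $[q]=\{q\}$ and $\tau(q)=q'$ when $[q]=\{q,q'\}$, which is well defined by Lemma \ref{pair}. Locally $\tau$ is the identity, an anti‑holomorphic diffeomorphism relating a neighbourhood of $q$ to one of $q'$, or — at a ramification point — the reflection $\psi$ of subsection 4.2; in every case $\tau$ is real analytic, so it is a globally real‑analytic, in fact anti‑holomorphic, self‑map of $\Sigma$. By the claim $\tau$ restricts to the identity on $B_\epsilon(p)$, hence fixes $p$ together with an open neighbourhood; on the other hand the very existence of the given $h:\Omega_1\to\Omega_2$ with $f\circ h=f$ guarantees $\{q:|[q]|=2\}\neq\emptyset$, so $\tau\not\equiv\id$. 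But an anti‑holomorphic map agreeing with $\id$ on an open set must be $\id$ (its fixed locus is cut out by real‑analytic equations on the analytic double, hence is either everything or nowhere dense), and this contradiction completes the proof.

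I expect the only genuinely delicate point to be justifying the isometry statement and the analytic continuation of $h$ in the absence of a holomorphic quadratic differential: one must verify that $f^*\nu$ is a conformal cone metric near branch points and that the continuation argument of Proposition \ref{cont}, which was phrased for the $\Phi$‑metric, transports cleanly — in particular that maximal embedded coordinate disks for $f^*\nu$ obey the same radius estimates. An alternative that sidesteps this entirely is to invoke the Gulliver–Osserman–Royden unique continuation property directly (as in their Proposition 3.24), which supplies the continuation of $h$ without reconstructing the argument; this is likely the cleaner route and everything else is bookkeeping identical to the non‑minimal case.
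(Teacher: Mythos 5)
Your overall architecture matches the paper's: reduce to showing that an unramified point has a non‑singleton class, derive from $[p]=\{p\}$ that $[q]=\{q\}$ on a small coordinate disk via analytic continuation of $h$ along a short curve to $p$, then assemble the global anti‑holomorphic involution $\tau$ and derive a contradiction. But your primary mechanism for the continuation — replacing the $\Phi$‑metric with the pulled‑back metric $f^*\nu$ and ``transplanting'' Proposition~\ref{cont} — is not what the paper does, and it has a real gap, which you partially flag. Proposition~\ref{cont} rests on two features special to a holomorphic quadratic differential: natural coordinates exist and are unique up to affine normalization, so maximal $\Phi$‑disk radii are well defined with the stated Lipschitz estimate; and the $\Phi$‑metric is flat, so an isometry defined on a tiny sub‑disk extends uniquely to the maximal disk by composing the two extended natural coordinate maps. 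Neither feature carries over to $f^*\nu$: a generic conformal metric has no natural coordinates, is not flat, and with only $C^2$ regularity of $\nu$ you cannot appeal to real‑analyticity of the metric to continue local isometries. The correct statement is that $h^*(f^*\nu)=f^*\nu$, so $h$ is an $f^*\nu$‑isometry — fine — but that alone does not give you the continuation. Your fallback — invoking Gulliver–Osserman–Royden's unique continuation property — is in fact exactly the paper's route: it builds the continuation as $f|_{U_2'}^{-1}\circ f|_{U_1'}$ after choosing $U_1'$ via their unique continuation property, and then quotes \cite[Proposition 3.14]{GOR} to know that $\gamma(t)\sim h(\gamma(t))$ persists along the continuation. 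You should commit to that route rather than treating it as a secondary option.

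The other place you diverge is the final contradiction in the claim. You conclude from $h(\gamma(1))=p$ that $d(p,q)=d(p,q')$ and contradict the choice of $\epsilon$. That is the shape of the contradiction in Lemma~\ref{ramify}, but it leans on $h$ being a $d$‑isometry and on the paths realizing the relevant distances. With $d$ the $f^*\nu$‑distance you would have to take $\gamma$ to be an $f^*\nu$‑geodesic and reconcile the Euclidean radius $\epsilon$ of the coordinate disk with $f^*\nu$‑distances, which degenerate at branch points; and the alternative you propose, taking $d$ Euclidean ``since $h$ is conformal and the paths involved are short,'' does not hold — a conformal map is not a Euclidean isometry and shortness does not control distortion. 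The paper's minimal‑case proof avoids all metric considerations: once $\gamma(t)\sim h(\gamma(t))$ is known to persist along the continuation, and the chosen disk $B_\epsilon(p)$ contains no pair of distinct equivalent points, $h(\gamma(t))$ is forced to lie outside $B_\epsilon(p)$ for $t$ near $1$, which is incompatible with $h(\gamma(1))=p\in B_\epsilon(p)$. That topological argument is the cleaner substitute for the isometry contradiction, and I would adopt it. The remainder of your write‑up — one implication from the ramification analysis of subsection 4.2 plus Lemma~\ref{pair}, the construction of $\tau$, and the final identity‑theorem contradiction — agrees with the paper.
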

\begin{proof}
Even if $f$ is minimal, analytic continuation is possible. Given a curve $\gamma$ starting in $\Omega_1$, we can analytically continue $h$ along $\gamma$ as long as $\gamma$ and $h(\gamma)$ stay sufficiently far away from the set $$\{p\in \Sigma: [p]\textrm{ intersects the branch set of }f\}.$$ To do so, we first can assume $f$ is a diffeomorphism on $\Omega_i$ and injective on $\overline{\Omega_i}$. If $q$ is the first point at which $\gamma$ strikes $\partial\Omega_1$, then $h(q)$ is well-defined. We choose disks $U_1$ and $U_2$ around $q$ and $h(q)$ respectively such that $f|_{\overline{U_i}}$ is a diffeomorphism. We then invoke the unique continuation property of Gulliver-Osserman-Royden to find a smaller disk $U_1'\subset U_1$ such that $f(U_1')\subset U_2$. Setting $U_2'=f|_{U_2}^{-1}(f(U_1'))$, the map $$f|_{U_2'}^{-1}\circ f|_{U_1'}: U_1'\to U_2'$$ is a conformal diffeomorphism that continues $h$, and is therefore anti-holomorphic. This establishes the continuation result. We also note that \cite[Proposition 3.14]{GOR} implies that if $\gamma$ is a curve along which we have continued $h$, then $p\sim h(p)$ for all $p$ in the image of $\gamma$. 

 We suppose there is a point $p$ at which $f$ is unramified and such that $[p]=\{p\}$. Choose a coordinate disk $\Omega$ around $p$ in which no two points are equivalent. We show that under this assumption we must have $[q]=\{q\}$ for all $q\in \Omega$. If not, then there is a $q\in \Omega$ and a $q'\not \in \Omega$ such that $q\sim q'$, and an anti-holomorphic diffeomorphism $h$ relating a neighbourhood of $q$ to one of $q'$. We analytically continue $h$ along a simple curve from $q$ to $p$ that does not touch any point that is equivalent to a branch point of $f$. It is easy to build such a curve, since the branch set is discrete, and equivalence classes can have only two points. Using the reasoning from Lemma \ref{ramify}, we can continue along all of $\gamma$ and $h(\gamma(1))=p$. Now, note that by assumption there is no pair $p_1,p_2\in B_\epsilon(p)$ with $p_1\in \gamma([0,1])$ and $p_1\sim p_2$. Taking $\gamma$ to the endpoint gives that $h(\gamma(t))$ lies outside $B_\epsilon(p)$ for $t\in [0,1]$ sufficiently close to $1$. This contradicts $h(p)=p$, and hence yields $[q]=\{q\}$ for all $q\in \Omega$. We can now conclude the proof exactly as we did in Lemma \ref{ramify}.
\end{proof}
The remainder of the content in subsection 4.2 goes through verbatim. The resulting map from the Klein surface to $M$ is minimal.
\end{section}

\bibliographystyle{alpha}
\bibliography{bibliography}
\end{document}